\newcommand\la{\leftarrow}
\newcommand\id{\mathrm{id}}
\newcommand\ten{\otimes}
\newcommand\hten{\hat{\otimes}}
\newcommand\vareps{\varepsilon}
\renewcommand\H{\mathrm{H}}
\newcommand\N{\mathbb{N}}
\newcommand\Z{\mathbb{Z}}
\newcommand\Q{\mathbb{Q}}
\newcommand\bF{\mathbb{F}}
\newcommand\bZ{\mathbb{Z}}
\newcommand\C{\mathcal{C}}
\newcommand\sA{\mathscr{A}}
\newcommand\sF{\mathscr{F}}
\newcommand\sN{\mathscr{N}}
\newcommand\sO{\mathscr{O}}
\newcommand\fU{\mathfrak{U}}
\newcommand\fX{\mathfrak{X}}
\newcommand\m{\mathfrak{m}}
\newcommand\Mod{\mathrm{Mod}}
\newcommand\Hom{\mathrm{Hom}}
\newcommand{\llb}{\llbracket}
\newcommand{\rrb}{\rrbracket}
\newcommand\Pfd{\mathrm{Pfd}}
\newcommand\im{\mathrm{Im\,}}
\newcommand\Ch{\mathrm{Ch}}
\newcommand\Ab{\mathrm{Ab}}
\newcommand\Spec{\mathrm{Spec}\,}
\newcommand\Set{\mathrm{Set}}
\newcommand\Lim{\varprojlim}
\newcommand\LLim{\varinjlim}
\newcommand\into{\hookrightarrow}
\newcommand\xra{\xrightarrow}
\newcommand\bt{\bullet}
\newcommand\by{\times}
\newcommand\Tot{\mathrm{Tot}\,}
\newcommand\Strat{\mathrm{Strat}}
\newcommand\strat{\mathrm{strat}}
\newcommand\qDR{\mathrm{qDR}}
\newcommand\pd{\partial}
\newcommand\Zar{\mathrm{Zar}}
\newcommand\co{\colon\thinspace}
\newcommand\oR{\mathbf{R}}
\newcommand\oL{\mathbf{L}}
\newcommand\uleft\underleftarrow
\newcommand\uline\underline
\newcommand\uright\underrightarrow
\newtheorem{theorem}{Theorem}[section]
\newtheorem{proposition}[theorem]{Proposition}
\newtheorem{corollary}[theorem]{Corollary}
\newtheorem{lemma}[theorem]{Lemma}
\newtheorem*{theorem*}{Theorem}
\newtheorem*{proposition*}{Proposition}
\newtheorem*{corollary*}{Corollary}
\newtheorem*{lemma*}{Lemma}
\newtheorem*{conjecture*}{Conjecture}
\theoremstyle{definition}
\newtheorem{definition}[theorem]{Definition}
\newtheorem*{definition*}{Definition}
\theoremstyle{remark}
\newtheorem{remark}[theorem]{Remark}
\newtheorem*{example*}{Example}
\newtheorem*{examples*}{Examples}
\newtheorem*{remark*}{Remark}
\newtheorem*{remarks*}{Remarks}
\newtheorem*{exercise*}{Exercise}
\newtheorem*{property*}{Property}
\newtheorem*{properties*}{Properties}
\title{On $q$-de Rham cohomology via $\Lambda$-rings}
\author{J.P.Pridham}
\begin{document}

\begin{abstract}
We show that Aomoto's $q$-deformation of de Rham cohomology 
arises as a natural cohomology theory for $\Lambda$-rings. Moreover,  Scholze's $(q-1)$-adic completion of $q$-de Rham cohomology depends only on the Adams operations at each residue characteristic. This gives a fully functorial cohomology theory, including a lift of the Cartier isomorphism, for smooth formal schemes in mixed characteristic equipped with a suitable lift of Frobenius. If we attach $p$-power roots of $q$, the resulting theory is independent even of these lifts of Frobenius, refining a comparison by  Bhatt, Morrow and Scholze.
\end{abstract}

%\subjclass[2010]{14F40, 14F30}

\maketitle

\section*{Introduction}

The $q$-de Rham cohomology of a polynomial ring is a $\Z[q]$-linear complex given by replacing the usual derivative with the Jackson $q$-derivative $\nabla_q(x^n)= [n]_qx^{n-1}dx$, where $[n]_q$ is Gauss' $q$-analogue $\frac{q^n-1}{q-1}$ of the integer $n$. In \cite{scholzeqdef}, Scholze discussed the $(q-1)$-adic completion of this theory for smooth rings, explaining relations to $p$-adic Hodge theory and singular cohomology, and conjecturing that it is   independent of co-ordinates, so functorial for smooth algebras over a fixed base \cite[Conjectures 1.1, 3.1 and 7.1]{scholzeqdef}. 

We show that $q$-de Rham cohomology with $q$-connections naturally arises as a functorial invariant of $\Lambda$-rings (Theorems \ref{mainthm},  \ref{mainthm2} and Proposition \ref{qconnprop}), and that its 
$(q-1)$-adic completion depends only on a $\Lambda_P$-ring structure (Theorem \ref{mainthm3}), for $P$ the set of residue characteristics; a $\Lambda_P$-ring has a lift of Frobenius for each $p \in P$. This recovers the known equivalence between de Rham cohomology and complete $q$-de Rham cohomology over the rationals, while giving no really new functoriality statements for smooth schemes over $\Z$. However, in mixed characteristic, it means that complete $q$-de Rham cohomology depends only on a lift $\Psi^p$ of absolute Frobenius locally generated by co-ordinates with $\Psi^p(x_i)=x_i^p$. Given such data, we construct (Proposition  \ref{cartierprop}) a  quasi-isomorphism between  Hodge cohomology and $q$-de Rham cohomology modulo $[p]_q$, extending the local lift of the  Cartier  isomorphism in \cite[Proposition 3.4]{scholzeqdef}.

Taking the Frobenius stabilisation of the complete $q$-de Rham complex of $A$  yields a complex resembling   the de Rham--Witt complex. We show (Theorem \ref{mainthm4}) that up to $(q^{1/p^{\infty}}-1)$-torsion, the $p$-adic completion of this complex depends only on the $p$-adic completion of $A[\zeta_{p^{\infty}}]$ (where $\zeta_n$ denotes a primitive $n$th root of unity), with no requirement for a lift of Frobenius or a choice of co-ordinates. 
The main idea is to show that the stabilised $q$-de Rham complex is in a sense given by applying Fontaine's period ring construction $A_{\inf}$ to  the best possible perfectoid approximation to $A[\zeta_{p^{\infty}}]$. 
As a consequence, this shows (Corollary \ref{finalcor}) that after attaching all $p$-power roots of $q$, $q$-de Rham cohomology in mixed characteristic is independent of choices, which was already known after base change to a period ring, via the comparisons of  \cite{BhattMorrowScholze} between $q$-de Rham cohomology and their theory $A\Omega$. 
 
The  cohomology theories we construct thus depend either on Adams operations at the residue characteristics (for de Rham) or on $p$-power roots of $q$ (for variants of de Rham--Witt), establishing correspondingly weakened versions of the conjectures of \cite{scholzeqdef}; in Remark \ref{finalrmk}, we suggest a possible candidate for a theory without those restrictions. 
The essence of our construction of $q$-de Rham cohomology of $A$ over $R$  is to set $q$ to be an element of rank $1$ for the $\Lambda$-ring structure, and to look at flat $\Lambda$-rings $B$ over $R[q]$ equipped with morphisms $A \to B/(q-1)$ of $\Lambda$-rings over $R$. If these seem unfamiliar, reassurance should be provided by the observation that $(q-1)B$  carries $q$-analogues of divided power operations (Remark \ref{DPrmk}).
For the variants of de Rham--Witt cohomology in \S \ref{DRWsn}, the key to giving a characterisation independent of lifts of Frobenius is the factorisation of the tilting equivalence for perfectoid algebras via a category of $\Lambda_p$-rings, leading to constructions similar to \cite{BhattMorrowScholze}.  

I would like to thank Peter Scholze for many helpful comments,  in particular  about the possibility of a  $q$-analogue of de Rham--Witt cohomology, and Michel Gros for spotting a missing hypothesis. I would also like to thank the anonymous referee for suggesting many improvements.

\tableofcontents

\section{Comparisons for $\Lambda$-rings}

We will follow standard notational conventions for $\Lambda$-rings. These are commutative rings equipped with operations $\lambda^i$ resembling alternating powers, in particular satisfying $\lambda^k(a+b)= \sum_{i=0}^{k} \lambda^i(a)\lambda^{k-i}(b)$, with $\lambda^0(a)= 1$ and $\lambda^1(a)=a$. For background, see \cite{borgerBasicGeomI} and references therein. The $\Lambda$-rings we encounter are all torsion-free, in which case \cite{wilkerson} shows the $\Lambda$-ring structure is equivalent to giving ring endomorphisms $\Psi^n$ for $n \in \Z_{>0}$ with $\Psi^{mn}=\Psi^m \circ \Psi^n$ and $\Psi^p(x) \equiv x^p \mod p$ for all primes $p$. 
If we write  $\lambda_t(f):=\sum_{i \ge 0} \lambda^i(f)t^i$ and $\Psi_t(f):= \sum_{n \ge 1} \Psi^n(f)t^n$, then the families of operations are related by the formula $\Psi_t= -t\frac{d\log\lambda_{-t} }{d t}$.

We refer to elements $x$   with $\lambda^i(x)=0$ for all $i>1$ (or equivalently $\Psi^n(x)=x^n$ for all $n$) as elements of rank $1$.

\subsection{The $\Lambda$-ring $\Z[q]$}

%For background on $\Lambda$-rings, see \cite{hazewinkel}. 
% 
\begin{definition}
 Define $\Z[q]$ to be the $\Lambda$-ring with operations determined by setting $q$ to be of rank $1$.
%  with respect to the $\Lambda$ structures. Explicitly, $\lambda^n(q)=0$ for $n>1$, or equivalently %$\sigma^n(q)=q^n$, or 
% $\Psi^n(q)=q^n$, so $\Psi^n(f)(q)= f(q^n)$.
%Define a $\Lambda[q]$-ring $A$ to be a $\Lambda$-ring $A$ equipped with a morphism $\Z[q] \to A$ of $\Lambda$-rings.
\end{definition}

We now consider the $q$-analogues $[n]_q:= \frac{q^n-1}{q-1} \in \Z[q]$ of the integers, with $[n]_q!=[n]_q[n-1]_q\ldots  [1]_q$, and $\binom{n}{k}_q= \frac{[n]_q!}{[n-k]_q![k]_q!}$.

\begin{remark}
 To see the importance of regarding $\Z[q]$ as a $\Lambda$-ring observe that the binomial expressions
\begin{align*}
 \lambda^k(n)=  \tbinom{n}{k},\quad
\lambda^k(-n)= (-1)^k\tbinom{n+k-1}{k}
%%\sigma^k(n) &=  \tbinom{n+k-1}{k} 
\end{align*}
have as $q$-analogues the Gaussian binomial theorems
\begin{align*}
 \lambda^k([n]_q)=  q^{k(k-1)/2}\tbinom{n}{k}_q,\quad
\lambda^k(-[n]_q) = (-1)^k\tbinom{n+k-1}{k}_q,
\end{align*}
as well as Adams operations
\[
 \Psi^i([n]_q)= [n]_{q^i}.
\]
\end{remark}

For any torsion-free $\Lambda$-ring, localisation at a set of elements closed under the Adams operations always yields another  $\Lambda$-ring, since  $\Psi^p(a^{-1})-a^{-p} = (\Psi^p(a)a^p)^{-1}(a^p- \Psi^p(a))$ is divisible by $p$.

\begin{lemma}\label{lambdalemma}
 For the $\Lambda$-ring structure on $\Z[x,y]$ with  $x,y$ of rank $1$, the  elements  
\[
 \lambda^n(\tfrac{y-x}{q-1}) \in\Z[q, \{(q^n-1)^{-1}\}_{n \ge 1},x,y]
\]
are given by
\begin{align*}
 \lambda^k(\tfrac{y-x}{q-1}) &=\frac{(y-x)(y-qx)\ldots(y-q^{k-1}x)}{(q-1)^k[k]_q!},\\
&= \sum_{j=0}^k\frac{q^{j(j-1)/2} (-x)^jy^{k-j}}{[j]_q![k-j]_q!}.
\end{align*}
\end{lemma}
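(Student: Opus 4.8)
The plan is to reduce everything to the one-variable Gaussian binomial theorem $\lambda^k([n]_q)=q^{k(k-1)/2}\tbinom{n}{k}_q$ from the Remark above, using naturality of the $\lambda$-operations together with a divisibility argument. It suffices to prove that $\lambda^k(\tfrac{y-x}{q-1})$ equals the first displayed expression
\[
L_k:=\frac{(y-x)(y-qx)\cdots(y-q^{k-1}x)}{(q-1)^k[k]_q!},
\]
since the second expression then follows from the $q$-binomial theorem $\prod_{i=0}^{k-1}(y-q^ix)=\sum_{j=0}^k(-1)^jq^{j(j-1)/2}\tbinom{k}{j}_q x^jy^{k-j}$ together with the identity $\tbinom{k}{j}_q/[k]_q!=1/([j]_q![k-j]_q!)$.

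I would work in the torsion-free $\Lambda$-ring $S:=\Z[q,x,y][\{(q^n-1)^{-1}\}_{n\ge1}]$ with $q,x,y$ of rank $1$; this is a $\Lambda$-ring by the remark on localisation at Adams-stable multiplicative sets, and it contains both $\lambda^k(\tfrac{y-x}{q-1})$ and $L_k$ (note $(q-1)^k[k]_q!=(q-1)(q^2-1)\cdots(q^k-1)$ is a unit in $S$). For each $n\ge1$ let $T:=\Z[q,t][\{(q^m-1)^{-1}\}_{m\ge1}]$ with $q,t$ of rank $1$, and let $\phi_n\colon S\to T$ be the $\Lambda$-ring homomorphism sending $q\mapsto q$, $x\mapsto t$, $y\mapsto q^nt$; it exists and is unique because $q$, $t$ and $q^nt$ are rank-$1$ elements of $T$ --- a product of rank-$1$ elements being rank $1$, since $\Psi^m(uv)=\Psi^m(u)\Psi^m(v)=(uv)^m$ --- for which the $q^m-1$ are invertible. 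Since $\phi_n(\tfrac{y-x}{q-1})=t\,[n]_q$ and the $\lambda^k$ commute with morphisms of $\Lambda$-rings,
\[
\phi_n\!\left(\lambda^k(\tfrac{y-x}{q-1})\right)=\lambda^k\!\left(t\,[n]_q\right)=t^k\lambda^k([n]_q)=t^k q^{k(k-1)/2}\tbinom{n}{k}_q ,
\]
where the middle step is the twist identity $\lambda^k(uz)=u^k\lambda^k(z)$ for $u$ of rank $1$ (by the splitting principle: on a sum $z=\sum_i\xi_i$ of rank-$1$ elements, $\lambda^k(uz)=e_k(u\xi_1,\dots,u\xi_N)=u^ke_k(\xi_1,\dots,\xi_N)=u^k\lambda^k(z)$), and the last is the Gaussian binomial theorem. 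On the other hand, pulling $q^i$ out of the $i$th factor of the numerator gives $\phi_n(L_k)=t^k\prod_{i=0}^{k-1}(q^n-q^i)\big/\big((q-1)^k[k]_q!\big)=t^kq^{k(k-1)/2}\tbinom{n}{k}_q$ as well. Hence $\phi_n(D)=0$ for all $n\ge1$, where $D:=\lambda^k(\tfrac{y-x}{q-1})-L_k\in S$.

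To finish, write $D=P(q,x,y)\big/\prod_i(q^{n_i}-1)$ with $P\in\Z[q,x,y]$. Since $T$ is a domain, $\phi_n(D)=0$ forces $P(q,t,q^nt)=0$, i.e. the irreducible polynomial $y-q^nx$ divides $P$ in $\Q(q)[x,y]$. As $q$ is transcendental over $\Q$ the slopes $q^n$ are pairwise distinct, so the factors $y-q^nx$ ($n\ge1$) are pairwise non-associate and $\prod_{n\ge1}(y-q^nx)$ would divide $P$; since $P$ has finite degree this forces $P=0$, hence $D=0$ and $\lambda^k(\tfrac{y-x}{q-1})=L_k$.

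The work is done essentially entirely by the Gaussian binomial theorem; the one step needing care --- and hence the main, if modest, obstacle --- is the last one, passing from agreement of the two sides on the lines $y=q^nx$ to an honest identity in $S$, an interpolation/density step legitimate because $q$ is transcendental. An alternative that sidesteps it is to verify the recursion $[k]_q\,\lambda^k(\tfrac{y-x}{q-1})=\lambda^{k-1}(\tfrac{y-x}{q-1})\cdot\big(\tfrac{y-x}{q-1}-x[k-1]_q\big)$ directly; this visibly propagates the factorisation $L_k=L_{k-1}\cdot\tfrac{y-q^{k-1}x}{(q-1)[k]_q}$ (using $\tfrac{y-q^{k-1}x}{q-1}=\tfrac{y-x}{q-1}-x[k-1]_q$) and so settles the Lemma by induction on $k$, but extracting that recursion from the Newton-type relations between the $\lambda^i$ and the $\Psi^n$ does not look any shorter.
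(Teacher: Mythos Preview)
Your argument is correct and follows essentially the same route as the paper's: both evaluate at $y=q^n x$, invoke the Gaussian binomial identity $\lambda^k([n]_q x)=q^{k(k-1)/2}\tbinom{n}{k}_q x^k$, and finish with an interpolation step. The only cosmetic difference is that the paper first asserts $\lambda^k(\tfrac{y-x}{q-1})$ is homogeneous of degree $k$ in $x,y$ and then uses agreement at infinitely many values of $y/x$, whereas you phrase the same conclusion as divisibility of a fixed polynomial by infinitely many pairwise coprime linear factors $y-q^n x$.
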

\begin{proof}
 The second expression comes from multiplying out the Gaussian binomial expansions. The easiest way to prove the first is to observe that  $\lambda^k(\tfrac{y-x}{q-1})$ must be a homogeneous polynomial of degree $k$ in $x,y$, with coefficients in the integral domain $\Z[q, \{(q^n-1)^{-1}\}_{n \ge 1}]$, and  to note that
\[
 \lambda^k(\tfrac{q^nx-x}{q-1})= \lambda^k([n]_qx)= q^{k(k-1)/2}\tbinom{n}{k}_qx^k.
\]
Thus $\lambda^k(\frac{y-x}{q-1})$  agrees with the homogeneous polynomial above for infinitely many values of $\frac{y}{x}$, so must be equal to it.
\end{proof}

\begin{remark}\label{DPrmk}
Note that as $q\to 1$, Lemma \ref{lambdalemma} gives $ (q-1)^k\lambda^k(\frac{y-x}{q-1})\to \frac{(x-y)^k}{k!}$. Indeed, for any rank $1$ element $x$ in a $\Lambda$-ring we have  
\[
 \lambda_{(q-1)t} (\frac{x}{q-1})= \sum_{k \ge 0} \frac{(xt)^k}{[k]_q!},
\]
which is just the $q$-exponential $e_q(xt)$. Multiplicativity and universality then imply   that $\lambda_{(q-1)t} (\frac{a}{q-1})$ is a $q$-deformation of $\exp(at)$ for all $a$. Thus $(q-1)^k\lambda^k(\frac{a}{q-1})$ is  a $q$-analogue of the $k$th divided power $(a^k/k!)$. 
An explicit expression  comes recursively from the formula
\[
[k]_q(q-1)\lambda^k(\tfrac{a}{q-1})=\sum_{i>0} \lambda^i(a)\lambda^{k-i}(\tfrac{a}{q-1}),
\]
obtained by subtracting $\lambda_t(\frac{a}{q-1})$ from each side of the expression $ \lambda_{qt}(\frac{a}{q-1})=\lambda_t(a)\lambda_t(\frac{a}{q-1})$, which arises because $q$ is of rank $1$ and $\frac{qa}{q-1}= a+\frac{a}{q-1}$.
\end{remark}

\begin{lemma}\label{subringbasis}
 For elements $x,y$ of rank $1$, the $\Lambda$-subring of $\Z[q, \{(q^n-1)^{-1}\}_{n \ge 1},x,y]$ generated by $q,x,y, \frac{y-x}{q-1}$ has basis      $\lambda^k(\frac{y-x}{q-1})$ as a $\Z[q,x]$-module.
\end{lemma}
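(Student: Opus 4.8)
The plan is to identify the $\Lambda$-subring with an evident $\Z[q,x]$-submodule. Write $R := \Z[q,\{(q^n-1)^{-1}\}_{n\ge1},x,y]$ and $a := \tfrac{y-x}{q-1}\in R$; since $q-1$ is invertible in $R$, the substitution $y=x+(q-1)a$ presents $R$ as the polynomial ring $\Z[q,\{(q^n-1)^{-1}\}][x,a]$. Let $S\subseteq R$ be the $\Lambda$-subring generated by $q,x,y,a$, and set $M := \sum_{k\ge0}\Z[q,x]\,\lambda^k(a)$. Since $q,x,a\in S$ and $S$ is closed under all $\lambda^k$, we have $M\subseteq S$; the task is to prove $S\subseteq M$ and that $\{\lambda^k(a)\}_{k\ge0}$ is a $\Z[q,x]$-basis of $M$.

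For independence I would invoke the first formula of Lemma \ref{lambdalemma}, which in the coordinate $a$ reads $\lambda^k(a)=\tfrac1{[k]_q!}\prod_{i=0}^{k-1}(a-[i]_q x)$: this is $1/[k]_q!$ times a monic polynomial of degree $k$ in $a$, so the $\lambda^k(a)$ are $\Z[q,x]$-linearly independent and $M=\bigoplus_k\Z[q,x]\,\lambda^k(a)$. The same formula shows that an element $r=\sum_k d_k\prod_{i=0}^{k-1}(a-[i]_qx)\in R$ (with $d_k\in\Z[q,\{(q^n-1)^{-1}\},x]$) lies in $M$ exactly when $[k]_q!\,d_k\in\Z[q,x]$ for every $k$; since the denominators $q^m-1$ of $R$ over $\Z[q]$ are monic, hence primitive, one has $\Z[q,\{(q^m-1)^{-1}\}]\cap\tfrac1N\Z[q]=\Z[q]$ for all $N\ge1$, and it follows that $R/M$ is torsion-free.

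It then remains to check that $M$ is a sub-$\Lambda$-ring. It contains the generators, since $q,x\in\Z[q,x]\subseteq M$, $a=\lambda^1(a)$, and $y=x\lambda^0(a)+(q-1)\lambda^1(a)$. Multiplication by $a$ preserves $M$: because $x,y$ are of rank $1$ one computes $\lambda^i(y-x)=(-x)^{i-1}(y-x)$ for $i\ge1$, so the recursion of Remark \ref{DPrmk} applied to the argument $y-x$ becomes, after cancelling $q-1$,
\[
[k]_q\lambda^k(a)=a\sum_{i=1}^{k}(-x)^{i-1}\lambda^{k-i}(a),
\]
which by induction on $k$ writes each $a\,\lambda^{k-1}(a)$ as an explicit $\Z[q,x]$-combination of the $\lambda^m(a)$; hence $\Z[q,x][a]\subseteq M$. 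Since $\lambda^l(a)\in\tfrac1{[l]_q!}\Z[q,x][a]$, every product $\lambda^l(a)\lambda^k(a)$ then lies in $(M\otimes_\Z\Q)\cap R=M$, so $M$ is a subring. For the operations, $\Psi^n(a)=\tfrac{y^n-x^n}{q^n-1}=\tfrac a{[n]_q}\sum_{i=0}^{n-1}y^ix^{n-1-i}$ lies in $\tfrac1{[n]_q}\Z[q,x][a]\subseteq M\otimes\Q$, hence in $(M\otimes\Q)\cap R=M$; and then $\Psi^n(\lambda^k(a))=\tfrac1{[k]_{q^n}!}\prod_{i=0}^{k-1}(\Psi^n(a)-[i]_{q^n}x^n)$ lies in $(M\otimes\Q)\cap R=M$, so $M$ is closed under all $\Psi^n$, and being torsion-free with $R/M$ torsion-free it is closed under all $\lambda^i$. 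Thus $M$ is a sub-$\Lambda$-ring containing $q,x,y,a$, whence $S\subseteq M$, and $S=M$ with the asserted basis.

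The step I expect to be the main obstacle is the passage from ``closed under multiplication by $a$'' and the formula for $\Psi^n(a)$ to genuine closure of $M$ under all products and all Adams operations: a priori the relevant elements lie only in $M\otimes_\Z\Q$, and one must use the torsion-freeness of $R/M$ (the cyclotomic-denominator statement above) to land back in $M$. A more hands-on alternative, bypassing this, is to establish the relevant $q$-Vandermonde identities directly, exhibiting $\lambda^k(a)\lambda^l(a)$ and $\Psi^n(\lambda^k(a))$ as explicit $\Z[q,x]$-linear combinations of the $\lambda^m(a)$.
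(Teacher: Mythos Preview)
There is a genuine gap in the crucial step: your claim that $\lambda^l(a)\lambda^k(a)\in M\otimes_{\Z}\Q$ does not follow from what precedes it. From $aM\subseteq M$ you correctly get $[l]_q!\,\lambda^l(a)\lambda^k(a)\in M$, but $[l]_q!$ is a nonconstant polynomial in $q$, not an integer, so this only places the product in $\tfrac{1}{[l]_q!}M$, which is \emph{not} contained in $M\otimes_{\Z}\Q=\bigoplus_m\Q[q,x]\lambda^m(a)$ (already $\tfrac{1}{[2]_q}\notin\Q[q]$). Equivalently, the torsion-freeness you establish is $\Z$-torsion-freeness of $R/M$, whereas your argument would need $\Z[q]$-torsion-freeness; this fails, since $\tfrac{1}{[2]_q}\in R\setminus M$ gives a nonzero $[2]_q$-torsion element of $R/M$. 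The same objection applies to your treatment of $\Psi^n(a)\in\tfrac{1}{[n]_q}\Z[q,x][a]$.

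The paper sidesteps this entirely with the direct ``$q$-Vandermonde'' argument you mention only as a fallback: from Lemma~\ref{lambdalemma} one has $\lambda^i(a)\lambda^j(a-[i]_qx)=\binom{i+j}{i}_q\lambda^{i+j}(a)$, and since $\lambda^j(a-[i]_qx)-\lambda^j(a)$ lies in the $\Z[q,x]$-span of lower $\lambda^m(a)$ (expand $\lambda_t(a-[i]_qx)=\lambda_t(a)\lambda_t(-[i]_qx)$), induction on $j$ gives $\lambda^i(a)\lambda^j(a)\in M$. Note also that once $M$ is a subring you are finished: because $q,x,y$ have rank $1$, the $\Lambda$-subring generated by $q,x,y,a$ is the \emph{ordinary} subring generated by $q,x,y$ and $\{\lambda^k(a)\}_{k\ge0}$ (the universal formulae for $\lambda^k$ of sums, products and iterates reduce everything to these), so your separate verification of closure under Adams operations is unnecessary.
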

\begin{proof}
The $\Lambda$-subring clearly contains the $\Z[q,x]$-module $M$ generated by the elements $\lambda^k(\frac{y-x}{q-1})$, which are also clearly $\Z[q,x]$-linearly independent. Since $\Z[q,x]$ is a $\Lambda$-ring, it suffices to show that $M$ is closed under multiplication. 

By Lemma \ref{lambdalemma}, we know that 
\[
 \lambda^i(\tfrac{y-x}{q-1})\lambda^j(\tfrac{y-q^ix}{q-1})= \tbinom{i+j}{i}_q\ \lambda^{i+j}(\tfrac{y-x}{q-1}).
\]
We can rewrite $\frac{y-q^ix}{q-1}= \frac{y-x}{q-1}- [i]_qx$, so $\lambda^j(\frac{y-q^ix}{q-1})-\lambda^j(\frac{y-x}{q-1})$ lies in the $\Z[q,x]$-module spanned by $\lambda^m(\frac{y-x}{q-1}) $ for $m<j$. By induction on $j$, it thus follows that
\[
 \lambda^i(\tfrac{y-x}{q-1})\lambda^j(\tfrac{y-q^ix}{q-1}) - \lambda^i(\tfrac{y-x}{q-1})\lambda^j(\tfrac{y-x}{q-1}) \in M,
\]
so the binomial expression above implies $ \lambda^i(\frac{y-x}{q-1})\lambda^j(\frac{y-x}{q-1}) \in M $.
\end{proof}

\subsection{$q$-cohomology of $\Lambda$-rings }

\begin{definition}
 Given a $\Lambda$-ring $R$, say that $A$ is a $\Lambda$-ring over $R$ if it is a $\Lambda$-ring equipped with a morphism $R \to A$ of $\Lambda$-rings. We say that $A$ is a flat $\Lambda$-ring over $R$ if $A$ is flat as a module over the commutative ring   underlying $R$.
\end{definition}

\begin{definition}\label{Stratqdef}
Given a morphism $R \to A$ of $\Lambda$-rings, we define the category $\Strat^q_{A/R}$ to consist of flat $\Lambda$-rings $B$ over $R[q]$ equipped with a compatible  morphism $f \co A \to B/(q-1)$, such that 
 $f$ admits a lift to $B$; a choice of lift is not taken to be part of the data, so need not be preserved by morphisms.

More concisely,  $\Strat^q_{A/R}$ is the Grothendieck construction of the set-valued functor
\[
(\Spec A)_{\strat}^q\co B \mapsto \im(\Hom_{\Lambda,R}(A,B)\to \Hom_{\Lambda,R}(A,B/(q-1))) 
\]
on the category $f\Lambda(R[q])$ of flat $\Lambda$-rings over $R[q]$. 
\end{definition}

\begin{definition}\label{qDRdef}
 Given a flat morphism $R \to A$ of $\Lambda$-rings, define $\qDR(A/R)$ to be the cochain complex of $R[q]$-modules given by taking the homotopy limit (in the sense of \cite{bousfieldkan}) of
 the functor
\begin{align*}
 \Strat^q_{A/R} &\to \Ch(R[q])\\
B &\mapsto B.
\end{align*}
The cochain complex $\qDR(A/R)$ naturally carries $(R[q], \Psi^n)$-semilinear operations $\Psi^n$ coming from the morphisms $\Psi^n \co B\ten_{R[q], \Psi^n}R[q] \to B$ of $R[q]$-modules, for $B \in \Strat^q_{A/R}$.
\end{definition}
 Equivalently, can we follow the approach of 
\cite{Gr,simpsonHtpy} towards the stratified site and de Rham stack by regarding $\qDR(A/R)$
as the quasi-coherent cohomology complex of $(\Spec A)_{\strat}^q$, as follows. 
\begin{definition}\label{RHomdef}
 Given a category $\C$, write
$[\C,\Set]$ and $[\C,\Ab]$ for the categories of functors on $\C$ taking values in  sets and abelian groups, respectively. For any functor $X \co \C \to \Set$,  we then denote by $\oR\Hom_{[\C,\Set]}(X,-)$ the functor from $[\C,\Ab]$ to cochain complexes given by taking the right-derived functor of the functor
\[
\Hom_{[\C,\Set]}(X,-)\co [\C,\Ab] \to \Ab
\] 
of natural transformations with source $X$.
\end{definition}

For the forgetful functor $\sO\co f\Lambda(R[q])\to \Mod(R[q])$   to the category of $R[q]$-modules,
we then have
\[
 \qDR(A/R)= \oR\Hom_{[f\Lambda(R[q]),\Set]}((\Spec A)_{\strat}^q,\sO),
\]
with Adams operations $\Psi^n \co \sO\ten_{R[q], \Psi^n}R[q] \to \sO$ giving the $(R[q], \Psi^n)$-semilinear operations $\Psi^n$   on $\qDR(A/R)$.

\begin{remark}\label{cosimplicialrmk}
 The cochain complex $\qDR(A/R)$ naturally carries much more structure than these Adams operations. Whenever we can factor the functor $\sO$ through a model category $\C$  equipped with a forgetful functor to  $\Ch(R[q])$ preserving weak equivalences and homotopy limits, we can regard  $\qDR(A/R)$ as an object of the homotopy category of $\C$ by taking the defining homotopy limit in $\C$. 

The universal such example for $\C$ is given by the model category of cosimplicial $\Lambda$-rings over $R[q]$, with weak equivalences being quasi-isomorphisms (i.e. cohomology isomorphisms) and fibrations being surjections; the underlying  cochain complex has differential $\sum (-1)^j \pd^j$. That this determines a model structure follows from Kan's transfer theorem  \cite[Theorem 11.3.2]{Hirschhorn} applied to the cosimplicial Dold--Kan normalisation functor taking values in unbounded chain complexes with the projective model structure; the conditions of that theorem are satisfied because the left adjoint functor sends acyclic cofibrant complexes to cosimplicial $\Lambda$-rings which automatically have a contracting homotopy in the form of an extra codegeneracy map.

In particular, $\qDR(A/R) $ naturally underlies a quasi-isomorphism class of cosimplicial $\Lambda$-rings over $R[q]$; forgetting the $\lambda$-operations gives a cosimplicial commutative $R[q]$-algebra, and stabilisation then gives an $E_{\infty}$-algebra over $R[q]$, all with underlying cochain complex  $\qDR(A/R) $. 
%%for ops, could see coho groups of Eilenberg--MacLane spaces, but they're messy. Meesier still $\sum_k d\lambda^k(x) t^k= \prod_i \lambda_t(\pd^ix)^{(-1)^i}$.
\end{remark}

\begin{definition}
 Given a polynomial ring $R[x]$, recall from \cite{scholzeqdef} that the $q$-de Rham (or Aomoto--Jackson) cohomology $q\mbox{-}\Omega^{\bt}_{ R[x]/R}$ is given by the complex
\[
 R[x][q] \xra{\nabla_q}  R[x][q]dx,\quad\text{ where }\quad \nabla_q(f) =  \frac{f(qx) -f(x)}{x(q-1)}dx,
\]
so $\nabla_q(x^n)= [n]_qx^{n-1}dx$. 

Given a polynomial ring $R[x_1, \ldots,x_d]$, the $q$-de Rham complex $q\mbox{-}\Omega^{\bt}_{ R[x_1, \ldots,x_d]/R}$ is then set to be
\[
 q\mbox{-}\Omega^{\bt}_{ R[x_1]/R}\ten_{R[q]} q\mbox{-}\Omega^{\bt}_{ R[x_2]/R}\ten_{R[q]}\ldots \ten_{R[q]} q\mbox{-}\Omega^{\bt}_{ R[x_d]/R},
\]
so takes the form
\[
 R[x_1, \ldots,x_d][q] \xra{\nabla_q} \Omega^1_{R[x_1, \ldots,x_d]/R}[q] \xra{\nabla_q} \ldots \xra{\nabla_q}\Omega^d_{R[x_1, \ldots,x_d]/R}[q].
\]
\end{definition}

\begin{definition}\label{tildeXqdef}
 Given a flat morphism $R \to A$ of $\Lambda$-rings with $X=\Spec A$,  define the 
functor $\tilde{X}_{\strat}^q$ from  flat $\Lambda$-rings over $R[q]$ to simplicial sets
 by taking the \v Cech nerve of $\Hom_{\Lambda,R}(A,B)\to \Hom_{\Lambda,R}(A,B/(q-1))$, so
\begin{align*}
 (\tilde{X}_{\strat}^q)_n(B)&:=  \overbrace{\Hom_{\Lambda,R}(A,B)\by_{\Hom_{\Lambda,R}(A,B/(q-1))} \ldots \by_{\Hom_{\Lambda,R}(A,B/(q-1))} \Hom_{\Lambda,R}(A,B) }^{n+1}\\
&= \Hom_{\Lambda,R}(A, \overbrace{B\by_{B/(q-1)}\ldots \by_{B/(q-1)} B}^{n+1}),
\end{align*}
with simplicial operations
\begin{align*}
 \pd_j(f_0,f_1, \ldots, f_n)&:= (f_0,f_1, \ldots, f_{j-1}, f_{j+1}, f_{j+2}, \ldots, f_n), \\
\sigma_j(f_0,f_1, \ldots, f_n) & := (f_0,f_1, \ldots, f_{j},f_j, f_{j+1}, \ldots, f_n). 
\end{align*}
\end{definition}

\begin{definition}\label{DKdef}
Given a cosimplicial abelian group $V^{\bt}$, we write $NV$ for the Dold--Kan normalisation of $V$ (\cite[Lemma 8.3.7]{W} applied the opposite category). This is a cochain complex with $N^rV= V^r \cap_{j <r} \ker \sigma^j$ and differential $d= \sum_{j=0}^{r+1} (-1)^j\pd^j \co N^rV \to N^{r+1}V$. 
\end{definition}

\begin{lemma}\label{Qcalclemma}
If, for $X=\Spec A$, the functors $(\tilde{X}_{\strat}^q)_n$ are represented by flat $\Lambda$-rings $\Gamma( (\tilde{X}_{\strat}^q)_n,\sO)$ over $R[q]$, then  a  model for $\qDR(A/R)$ is given by the Dold--Kan normalisation  of the cosimplicial module $n \mapsto \Gamma( (\tilde{X}_{\strat}^q)_n,\sO)$.
% \[
% \xymatrix@1{  \Gamma( (\tilde{X}_{\strat}^q)_0,\sO)\xra{\pd_0^{\sharp}-\pd_1^{\sharp}} \Gamma( (\tilde{X}_{\strat}^q)_1,\sO) \xra{\pd_0^{\sharp}-\pd_1^{\sharp}+\pd_2^{sharp}} \ldots.
% \]
\end{lemma}
\begin{proof}
 The set-valued functor $X_{\strat}^q=(\Spec A)_{\strat}^q$ of Definition \ref{Stratqdef}  is resolved by the simplicial functor $\tilde{X}_{\strat}^q$ of Definition \ref{tildeXqdef}. In the notation of Definition \ref{RHomdef}, this implies that the functor $\Hom_{[f\Lambda(R[q]),\Set]}(X_{\strat}^q,-)$ on $[f\Lambda(R[q]),\Ab]$ is resolved by the cochain complex
% \[
%  \Hom_{[f\Lambda(R[q]),\Set]}((\tilde{X}_{\strat}^q)_0,-)\xra{\pd_0^{\sharp}-\pd_1^{\sharp}}
% \Hom_{[f\Lambda(R[q]),\Set]}((\tilde{X}_{\strat}^q)_1,-)\xra{\pd_0^{\sharp}-\pd_1^{\sharp}+\pd_2^{sharp}} \ldots.
% \]
\[
N \Hom_{[f\Lambda(R[q]),\Set]}((\tilde{X}_{\strat}^q)_{\bt},-).
\]

Although $X_{\strat}^q$  is not representable on the category of flat $\Lambda$-rings over $R[q]$,  our hypotheses ensure that each functor $(\tilde{X}_{\strat}^q)_n$ is so. Thus the functors  $\Hom_{[f\Lambda(R[q]),\Set]}((\tilde{X}_{\strat}^q)_n,-)$ and their direct summands $N^n \Hom_{[f\Lambda(R[q]),\Set]}((\tilde{X}_{\strat}^q)_{\bt},-)$ are exact, and are their own right-derived functors. This implies that the cochain complex of functors above models $\oR\Hom_{[f\Lambda(R[q]),\Set]}(X_{\strat}^q,-)$, and the result follows by evaluation at $\sO$.
\end{proof}

\begin{proposition}\label{Qcalc}
 If $R$ is a $\Lambda$-ring and $x$ of rank $1$, then $\qDR(R[x]/R)$ can be calculated by Dold--Kan normalisation of the cosimplicial $R[q]$-module $U^{\bt}$ given by setting $U^n$ to be the $\Lambda$-subring   
\[
U^n \subset R[q, \{(q^m-1)^{-1}\}_{m \ge 1},x_0, \ldots, x_n]
\] 
generated by  $q$ and the elements $x_i$ and  $\frac{x_i-x_j}{q-1}$, with cosimplicial operations 
\[
 \pd^jx_i := \begin{cases}
              x_i & j>i \\ x_{i+1} & j \le i,  
             \end{cases}
\quad \quad
\sigma^jx_i := \begin{cases}
                  x_i & j\ge i \\ x_{i-1} & j < i.
                \end{cases}  
\]%% $x_i = \pd^n\ldots \hat{\pd^i}\ldots \pd^0x_0$.
%Explicitly, $\qDR(R[x]/R)\simeq (U^*, \sum (-1)^i \pd^i)$.
%%extra codegeneracy $\sigma^{-1}x_i= x_{i-1}$ ($i>0$) and $\sigma^{-1}x_0=0$, detd by $x_i=(\pd^0)^i(\pd^1)^{n-i}x$. $\sigma^j\sigma^{-1}=\sigma^{-1}\sigma^{j+1}$. $\sigma^{-1}\pd^i=\pd^{i-1}\sigma^{-1}$ for $i >0$, $\sigma^{-1}\pd^0=\id$.
\end{proposition}
\begin{proof}
We verify the conditions of Lemma \ref{Qcalclemma} by showing that each $U^n$ is a flat $\Lambda$-ring over $R[q]$ representing $(\tilde{X}_{\strat}^q)_n$.
Taking  $X=\Spec R[x]$, observe that any element of $(\tilde{X}_{\strat}^q)_n(B)$ gives rise to a morphism $f \co R[q,x_0, \ldots, x_n] \to B$ of  $\Lambda$-rings over $R[q]$, with the image of $x_i-x_j$ divisible by $(q-1)$. Flatness of $B$ then gives a unique element $f(x_i-x_j)/(q-1) \in B$, so we have a map $f$ to $B$  from the free  $\Lambda$-ring $L$ over $R[q,x_0, \ldots, x_n]$ generated by  elements $z_{ij}$ with $(q-1)z_{ij} =x_i-x_j$. 

Since  $B$ is flat, it embeds in $B[\{(q^m-1)^{-1}\}_{m \ge 1} ]$ (the only hypothesis we really need) implying that the image of $f$ factors through  the image $U^n$ of $L$ in $R[q, \{(q^m-1)^{-1}\}_{m \ge 1},x_0, \ldots, x_n]$. To see that $(\tilde{X}_{\strat}^q)_n$ is represented by $U^n$,  we only now need to check that $U^n$ is itself flat over $R[q]$, which follows because the argument of Lemma \ref{subringbasis} gives a  basis
\[
 x_0^{r_0}\lambda^{r_1}(\tfrac{x_1-x_0}{q-1})\cdots \lambda^{r_n}(\tfrac{x_n-x_{n-1}}{q-1})
\]
for $U^n$ over $R[q]$. We therefore have $\qDR( R[x]/R)\simeq NU^{\bt}$.
\end{proof}

In fact, the proofs of Lemma \ref{Qcalclemma} and Proposition \ref{Qcalc} show that the natural  cosimplicial $\Lambda$-ring structure on  $U^{\bt}$ gives a model for the  cosimplicial $\Lambda$-ring structure on $\qDR(R[x]/R)$ coming from Remark \ref{cosimplicialrmk}.

\begin{definition}
 Following \cite[Proposition 5.4]{scholzeqdef}, we denote by $\oL \eta_{(q-1)}$ the d\'ecalage functor with respect to the  derived $(q-1)$-adic filtration. This is given on complexes $C^{\bt}$ of $(q-1)$-torsion-free $R[q]$-modules by 
\[
 (\eta_{(q-1)}C)^n := \{ c \in (q-1)^nC^n ~:~ dc \in (q-1)^{n+1}C^{n+1}\},
\]
and is extended to the derived category of $R[q]$-modules by taking torsion-free resolutions.
\end{definition}

\begin{theorem}\label{mainthm} 
 If $R$ is a $\Lambda$-ring and if the polynomial ring  $R[x_1, \ldots,x_n]$ is given the $\Lambda$-ring structure for which the elements $x_i$ are of rank $1$, then there are $R[q]$-linear zigzags of quasi-isomorphisms
\begin{align*}
 \qDR( R[x_1, \ldots,x_n]/R) &\simeq (\Omega^*_{R[x_1, \ldots,x_n]/R}[q], (q-1)\nabla_q)\\
 \oL\eta_{(q-1)} \qDR( R[x_1, \ldots,x_n]/R) &\simeq q\mbox{-}\Omega^{\bt}_{R[x_1, \ldots,x_n]/R}.
\end{align*}
\end{theorem}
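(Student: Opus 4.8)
The plan is to establish the first quasi-isomorphism for all $n$ by a K\"unneth reduction to the case $n=1$, to handle that case via a $q$-analogue of the crystalline Poincar\'e lemma, and then to obtain the second quasi-isomorphism by applying derived d\'ecalage. \emph{First, the reduction to one variable.} Since the coproduct of $\Lambda$-rings over $R$ is $\ten_R$, we have $\Hom_{\Lambda,R}(A\ten_R A',-)=\Hom_{\Lambda,R}(A,-)\by\Hom_{\Lambda,R}(A',-)$, so the set-valued functor $(\Spec-)_{\strat}^q$ takes tensor products to products, and likewise levelwise for the \v Cech nerves resolving it; as each $U^n$ is flat over $R[q]$ (Proposition \ref{Qcalc}), the dual Eilenberg--Zilber theorem gives
\[
\qDR(A\ten_R A'/R)\ \simeq\ \qDR(A/R)\ten^{\oL}_{R[q]}\qDR(A'/R).
\]
Iterating over the coordinates reduces the first assertion to $\qDR(R[x]/R)\simeq\bigl(R[x][q]\xra{(q-1)\nabla_q}R[x][q]\,dx\bigr)$, because then each factor is a complex of flat $R[q]$-modules (so the derived tensor is underived) and the K\"unneth formula for the K\"ahler differentials of a polynomial ring — noting that the tensor-product differential is exactly $(q-1)\nabla_q$ — gives $(\Omega^*_{R[x_1,\ldots,x_n]/R}[q],(q-1)\nabla_q)$.

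\emph{The one-variable case.} By Proposition \ref{Qcalc} it suffices to show the cosimplicial ring $U^\bt$ has total complex quasi-isomorphic to $R[x][q]\xra{(q-1)\nabla_q}R[x][q]\,dx$. In the conormalisation, Lemma \ref{subringbasis} makes the degree-$k$ term the free $R[q,x_0]$-module on the products $\prod_{j=1}^k\lambda^{r_j}\bigl(\tfrac{x_j-x_{j-1}}{q-1}\bigr)$ with all $r_j\ge 1$, the differential being $\sum_i(-1)^i\partial^i$ with $\partial^i$ deleting the $i$-th of the coordinates $x_0,\ldots,x_k$. Geometrically $\Spec U^1\to\Spec U^0$ (via $\partial^1\co x\mapsto x_0$) is the $q$-divided-power neighbourhood of the diagonal, with "conormal'' generator $z=\tfrac{x_1-x_0}{q-1}$ and $(q-1)z=\partial^0(x)-\partial^1(x)$, and the point is that its $q$-de Rham complex along the fibre is acyclic above degree $0$. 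I would make the comparison precise via an $R[q]$-linear zigzag
\[
(\Omega^*_{R[x]/R}[q],(q-1)\nabla_q)\ \xla{\ \simeq\ }\ K^\bt\ \xra{\ \simeq\ }\ N^\bt U^\bt,
\]
with $K^\bt$ a Koszul-type complex built from $U^0$ and the single generator $z$ (a $q$-deformation of the divided-power Poincar\'e resolution), the left map the canonical connection/$q$-derivative and the right map assembled from the $\lambda^{r}(z_j)$; the required identities are all instances of the Gaussian binomial formulas of Lemma \ref{lambdalemma} and the $q$-divided-power recursion of Remark \ref{DPrmk}. As a consistency check, after inverting every $[m]_q$ the ring $U^n$ becomes the polynomial ring $R[q,\{[m]_q^{-1}\}][x_0,\ldots,x_n]$, so $U^\bt$ becomes the \v Cech nerve of $\Spec R[q,\{[m]_q^{-1}\}][x]\to\Spec R[q,\{[m]_q^{-1}\}]$, split by $x\mapsto 0$ and so computing the base, while the same localisation collapses $\coker((q-1)\nabla_q)$ to constants — so both complexes agree after this localisation and the lemma is the integral refinement.

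\emph{The second statement.} Apply $\oL\eta_{(q-1)}$ to the first. The object $\qDR(R[x_1,\ldots,x_n]/R)$ is represented by $U^\bt$, with flat — hence $(q-1)$-torsion-free — $R[q]$-module terms, so there $\oL\eta_{(q-1)}$ is computed by the naive d\'ecalage; the terms $\Omega^i_{R[x_1,\ldots,x_n]/R}[q]$ are likewise free over $R[q]$. Since $(q-1)\nabla_q$ is literally $(q-1)$ times the square-zero operator $\nabla_q$ on torsion-free modules, multiplication by $(q-1)^i$ in cohomological degree $i$ identifies $\eta_{(q-1)}(\Omega^*[q],(q-1)\nabla_q)$ with $(\Omega^*[q],\nabla_q)=q\mbox{-}\Omega^{\bt}_{R[x_1,\ldots,x_n]/R}$. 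Transporting the zigzag of the first statement through $\oL\eta_{(q-1)}$ gives the second.

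\emph{Main obstacle.} The substantive step is the one-variable $q$-Poincar\'e lemma — constructing $K^\bt$ and checking both comparison maps are quasi-isomorphisms. The obvious inclusion of the de Rham complex into $N^\bt U^\bt$ is not a chain map (the coface $f\mapsto f(x_1)-f(x_0)$ feeds in higher $q$-divided powers of $z$), so one must work with a genuine zigzag and control the Gaussian binomial coefficients in the transition maps; proving that the "fibre directions'' of $U^\bt$ contribute nothing above cohomological degree $1$ is the heart of the matter.
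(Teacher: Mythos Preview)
Your K\"unneth reduction to one variable and your d\'ecalage argument for the second statement both match the paper exactly. The divergence is in the one-variable case, where you correctly identify the obstacle but do not resolve it: you propose an unspecified Koszul-type intermediary $K^\bt$ and a zigzag through the conormalisation $N^\bt U^\bt$, and you concede that constructing the maps and checking them is ``the heart of the matter''. As written this is a plan, not a proof.

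The paper's device is to avoid the conormalisation entirely by forming a bicomplex. For each $n$ one takes the $q$-de Rham complex of $U^n$ in \emph{all} its variables,
\[
\tilde{\Omega}^\bt(U^n)\;=\;\Bigl(U^n\xra{(q-1)\nabla_q}\bigoplus_i U^n\,dx_i\xra{(q-1)\nabla_q}\cdots\Bigr),
\]
and the single identity
\[
(q-1)\nabla_{q,y}\,\lambda^k\!\bigl(\tfrac{y-x}{q-1}\bigr)=\lambda^{k-1}\!\bigl(\tfrac{y-x}{q-1}\bigr)\,dy
\]
(an immediate consequence of $\tfrac{qy-x}{q-1}=y+\tfrac{y-x}{q-1}$) shows that $\tilde{\Omega}^\bt(U^{n-1})\hookrightarrow\tilde{\Omega}^\bt(U^n)$ is a quasi-isomorphism --- this \emph{is} your $q$-Poincar\'e lemma, stated and proved in one line. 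In the cosimplicial direction, $\tilde{\Omega}^i(U^\bt)$ is for $i>0$ a tensor of $U^\bt$ with cosimplicial symmetric powers of the acyclic complex $n\mapsto\bigoplus_j\Z\,dx_j$, hence acyclic. The two spectral sequences of $\Tot\tilde{\Omega}^\bt(U^\bt)$ then give the zigzag
\[
U^\bt\;\xla{\ \simeq\ }\;\Tot\tilde{\Omega}^\bt(U^\bt)\;\xra{\ \simeq\ }\;\tilde{\Omega}^\bt(U^0)=\bigl(\Omega^*_{R[x]/R}[q],(q-1)\nabla_q\bigr).
\]
This replaces your undetermined $K^\bt$ by a concrete, functorial object, and converts the ``Gaussian binomial identities'' you invoke into the single differentiation formula above. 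If you want to salvage your approach, that formula is exactly the contracting homotopy you need on the fibres; but packaging it via the bicomplex is what makes the zigzag come for free.
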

\begin{proof}
It suffices to prove the first statement, the second following immediately by d\'ecalage. We have $(\Spec A\ten_RA')_{\strat}^q(B)= (\Spec A)_{\strat}^q(B)\by (\Spec A')_{\strat}^q(B)$, and similarly for the simplicial functor $\widetilde{(\Spec A\ten_RA')}_{\strat}^q$ of Definition \ref{tildeXqdef}. Since
coproduct of flat $\Lambda$-rings over $R[q]$ is given by $\ten_{R[q]}$, it follows from Lemma \ref{Qcalclemma} and Proposition \ref{Qcalc} that  $\qDR(R[x_1, \ldots,x_n]/R)$ can be calculated as the Dold--Kan normalisation of   $(U^{\bt})^{\ten_{R[q]}n}$ (given by the $n$-fold tensor product $ (U^m)^{\ten_{R[q]}n}$ in cosimplicial level $m$), for the cosimplicial module $U^{\bt}$ of Proposition \ref{Qcalc}.   

The proof now proceeds in a similar fashion to the comparison between crystalline and de Rham cohomology in \cite{BhattDeJong}. We consider the cochain complexes $\tilde{\Omega}^{\bt}(U^m)$ given by
\[
 U^m \xra{(q-1)\nabla_q} \bigoplus_i  U^mdx_i \xra{(q-1)\nabla_q}\bigoplus_{i<j}  U^mdx_i\wedge dx_j  \xra{(q-1)\nabla_q}\ldots .
\]
In order to see that this differential takes values in the codomains given,  observe that 
\begin{align*}
  (q-1)\nabla_{q,y}\lambda^k(\tfrac{y-x}{q-1})&= y^{-1}(\lambda^k(\tfrac{qy-x}{q-1}) - \lambda^k(\tfrac{y-x}{q-1}))dy\\
&= y^{-1}(\lambda^k(y+ \tfrac{y-x}{q-1}) - \lambda^k(\tfrac{y-x}{q-1}))dy\\
&= \lambda^{k-1}(\tfrac{y-x}{q-1})dy,
\end{align*}
and similarly
\[
 (q-1)\nabla_{q,x}\lambda^k(\tfrac{y-x}{q-1})
% = x^{-1}(\lambda^k(\tfrac{y-qx}{q-1})-\lambda^k(\tfrac{y-x}{q-1}))
% = x^{-1}(\lambda^k(\tfrac{y-x}{q-1} -x)-\lambda^k(\tfrac{y-x}{q-1}))
=  \sum_{i \ge 1} (-1)^ix^{i-1}\lambda^{k-i}(\tfrac{y-x}{q-1})dx. 
\]

The first calculation   also shows that the inclusion  $\tilde{\Omega}^{\bt}(U^{m-1})\into \tilde{\Omega}^{\bt}(U^m)$ is a quasi-isomorphism, since for $\omega \in\tilde{\Omega}^{\bt}(U^{m-1})$, we have 
\[
%  (q-1)\nabla_{q,x_m}(f(x_0, \ldots, x_{m-1})\lambda^k(\tfrac{x_m-x_{m-1}}{q-1}))= f(x_0, \ldots, x_{m-1})\lambda^{k-1}(\tfrac{x_m-x_{m-1}}{q-1})dx_m,
(q-1)\nabla_{q,x_m}\omega\lambda^k(\tfrac{x_m-x_{m-1}}{q-1})= \omega\lambda^{k-1}(\tfrac{x_m-x_{m-1}}{q-1})dx_m
\]
for $k \ge 1$,
allowing us to define a contracting homotopy 
 \begin{align*}
 h(\omega\lambda^{k-1}(\tfrac{x_m-x_{m-1}}{q-1})dx_m) &:=  \omega\lambda^k(\tfrac{x_m-x_{m-1}}{q-1}),\\
h(\omega\lambda^{k-1}(\tfrac{x_m-x_{m-1}}{q-1})) &:=0.
 \end{align*}

Since contracting homotopies interact well with tensor products, it also follows that the inclusion  $\tilde{\Omega}^{\bt}(U^{m-1})^{\ten_{R[q]}n}\into \tilde{\Omega}^{\bt}(U^m)^{\ten_{R[q]}n}$ is a quasi-isomorphism. By induction on $m$ we deduce that the inclusions $\tilde{\Omega}^{\bt}(U^0)^{\ten_{R[q]}n}\into \tilde{\Omega}^{\bt}(U^m)^{\ten_{R[q]}n}$, and hence their retractions given by diagonals $U^m \to U^0$, are quasi-isomorphisms. These combine to give a  quasi-isomorphism
\[
   \Tot N(\tilde{\Omega}^{\bt}(U^{\bt})^{\ten_{R[q]}n}) \to \tilde{\Omega}^{\bt}(U^0)^{\ten_{R[q]}n}= \tilde{\Omega}^{\bt}(R[x])^{\ten_{R[q]}n}
\]
on total complexes of normalisations. 

Now, the cosimplicial module $\tilde{\Omega}^r(U^{\bt})$ is given by  the cosimplicial (i.e. levelwise) tensor product of $U^{\bt}$ with the cosimplicial $\Z$-module  
\[
j \mapsto \bigoplus_{0 \le i_1< i_2< \ldots < i_r \le j} \Z dx_{i_1}\wedge \ldots \wedge d x_{i_r},
\]
 with operations induced by those in Proposition \ref{Qcalc}.
For $r>0$, this cosimplicial $\Z$-module is contractible, via the extra codegeneracy map given by 
\[
\sigma^{-1}(dx_{i_1}\wedge \ldots \wedge dx_{i_r}) = \begin{cases}
                                                        dx_{i_1-1}\wedge \ldots \wedge dx_{i_r-1} & i_1>0, \\
0 & i_1=0.
\end{cases}
\]
The Eilenberg--Zilber theorem (\cite[\S 8.5]{W} applied to the opposite category) ensures that the normalisation of a cosimplicial tensor product is quasi-isomorphic to the tensor product of the normalisations.  
Tensoring with a complex which has an extra codegeneracy map always produces an acyclic complex,   so 
$\tilde{\Omega}^r(U^{\bt})$ and its tensor powers are all acyclic for $r>0$. 

The brutal truncation maps 
\[
\Tot N(\tilde{\Omega}^{\bt}(U^{\bt})^{\ten_{R[q]}n}) \to N(U^{\bt})^{\ten_{R[q]}n}\simeq \qDR( R[x_1, \ldots,x_n]/R) %\to \tilde{\Omega}^{\bt}(U^0)^{\ten_{R[q]}n}
\]
are therefore quasi-isomorphisms
 of flat cochain complexes over $R[q]$, so 
\[
 \qDR( R[x_1, \ldots,x_n]/R) \simeq \tilde{\Omega}^{\bt}(R[x])^{\ten_{R[q]}n},
\]
and we just observe that    $\tilde{\Omega}^{\bt}(R[x]) =(\Omega^*_{R[x]/R}[q], (q-1)\nabla_q)$.
\end{proof}

\begin{remark}
 Note that Theorem \ref{mainthm} and Remark \ref{cosimplicialrmk} together imply that $q\mbox{-}\Omega^{\bt}_{R[x_1, \ldots,x_n]/R}$ naturally underlies the d\'ecalage of a cosimplicial $\Lambda$-ring over $R[q]$. Even the underlying cosimplicial commutative ring structure carries more information than an $E_{\infty}$-structure when $\Q \nsubseteq R$. 
\end{remark}

\subsection{Completed $q$-cohomology}

\begin{definition}
Given a morphism $R \to A$ of $\Lambda$-rings, we define the category $\hat{\Strat}^q_{A/R} \subset \Strat^q_{A/R}$ to consist of those objects which are $(q-1)$-adically complete.

Equivalently, $\hat{\Strat}^q_{A/R}$ is the Grothendieck construction of the functor
\[
\widehat{(\Spec A)}_{\strat}^q\co  B \mapsto \im(\Hom_{\Lambda,R}(A,B)\to \Hom_{\Lambda,R}(A,B/(q-1)))
\]
on the category of flat $(q-1)$-adically complete $\Lambda$-rings over $R[q]$. 
\end{definition}

\begin{definition}
 Given a flat morphism $R \to A$ of $\Lambda$-rings, define $\widehat{\qDR}(A/R)$ to be the cochain complex of $R\llbracket q-1 \rrbracket$-modules given by taking the homotopy limit of
 the functor
\begin{align*}
 \hat{\Strat}^q_{A/R} &\to \Ch(R\llbracket q-1 \rrbracket)\\
B &\mapsto B.
\end{align*}
\end{definition}

The following is immediate:
\begin{lemma}
  Given a flat morphism $R \to A$ of $\Lambda$-rings, the complex $\widehat{\qDR}(A/R)$ is the derived $(q-1)$-adic completion of $\qDR(A/R)$.
\end{lemma}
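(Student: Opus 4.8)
The plan is to unwind both complexes as homotopy limits over the relevant Grothendieck constructions and to exploit that $(q-1)$-adic completion is a reflective localisation. By definition $\qDR(A/R) \simeq \holim_{B \in \Strat^q_{A/R}} \O(B)$ and $\hat{\qDR}(A/R) \simeq \holim_{B \in \hat{\Strat}^q_{A/R}} \O(B)$, where $\O$ is the forgetful functor to $R[q]$-modules; and the derived $(q-1)$-adic completion of a complex $C$ is $\holim_n \mathrm{fib}(C \xrightarrow{(q-1)^n} C)$, a composite of a tower-valued functor with $\holim_n$, each assembled from finite homotopy limits, so derived completion commutes with all homotopy limits.

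First I would check that $(q-1)$-adic completion $B \mapsto \hat B := \varprojlim_n B/(q-1)^n$ defines a reflector $r\colon \Strat^q_{A/R} \to \hat{\Strat}^q_{A/R}$ left adjoint to the inclusion $\iota$: the Adams operations are $(q-1)$-adically continuous since $\Psi^n(q-1) = (q-1)[n]_q$, the structure map $A \to \hat B/(q-1) = B/(q-1)$ is inherited and lifts to $\hat B$ by composing a lift $A \to B$ with $B \to \hat B$, and the universal property of completion gives $\Hom_{\hat{\Strat}^q_{A/R}}(\hat B, C) \cong \Hom_{\Strat^q_{A/R}}(B, \iota C)$ for $C$ complete. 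For such a reflector each comma category $(r \downarrow d)$ is, via this adjunction, equivalent to the slice $\Strat^q_{A/R} \downarrow \iota d$, which has a terminal object and is hence weakly contractible; thus $r$ is homotopy-initial and restriction of homotopy limits along $r$ is an equivalence. Applying this to $\O$ yields
\[
 \hat{\qDR}(A/R) \simeq \holim_{C \in \hat{\Strat}^q_{A/R}} \O(C) \simeq \holim_{B \in \Strat^q_{A/R}} \O(\hat B).
\]
To finish, since every $B \in \Strat^q_{A/R}$ is flat over $R[q]$ and $q-1$ is a non-zero-divisor there, $\O(B) \ten^{\oL}_{R[q]} R[q]/(q-1)^n = \O(B)/(q-1)^n$ with surjective transition maps, so the classical completion $\O(\hat B) = \O(B)\hat{}$ coincides with the derived $(q-1)$-adic completion of $\O(B)$; commuting derived completion past $\holim_{B \in \Strat^q_{A/R}}$ then identifies the displayed complex with the derived $(q-1)$-adic completion of $\holim_{B \in \Strat^q_{A/R}} \O(B) = \qDR(A/R)$.

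The one step needing genuine care — and the likely obstacle — is verifying that completion really lands in $\hat{\Strat}^q_{A/R}$, i.e. that $\hat B$ stays flat over $R[q]$; this holds when $R$ is Noetherian (completion of a flat module along a finitely generated ideal of a Noetherian ring is flat over the completion, and $R\llbracket q-1\rrbracket$ is flat over $R[q]$), and I would either adopt this mild hypothesis or sidestep it in the situation of Proposition \ref{Qcalc} ($A$ a polynomial ring): there $\qDR(R[x]/R) \simeq \Tot \O(U^\bt)$ with each $U^n$ a \emph{free} $R[q]$-module, the restriction of $(\tilde X^q_\strat)_n$ to the complete setting is represented by $\widehat{U^n}$ by the universal property of completion, the \v Cech-nerve argument still exhibits this as a resolution, and one completes the free modules $\O(U^n)$ directly, several variables following by the base change $\qDR((A\ten_RA')/R)\simeq \qDR(A/R)\ten^{\oL}_{R[q]}\qDR(A'/R)$ of Theorem \ref{mainthm}. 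Either way the substantive input is only flatness together with the commutation of derived completion with homotopy limits.
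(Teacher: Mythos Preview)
The paper offers no proof at all: the lemma is preceded only by ``The following is immediate:''. Your write-up is therefore far more detailed than anything the author supplies, and the strategy you give --- exhibit $(q-1)$-adic completion as a reflector, use that left adjoints are homotopy initial, then commute derived completion (which for the principal ideal $(q-1)$ is built from $\oR\Hom(R[q]/(q-1)^n,-)$ up to a shift, hence a homotopy limit) past $\holim_{\Strat^q_{A/R}}$ --- is a correct way to justify the claim.

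Two small comments. First, your parenthetical ``completion of a flat module along a finitely generated ideal of a Noetherian ring is flat over the completion'' is not a standard fact for arbitrary (non-finitely-generated) flat modules, so it does not obviously dispatch the flatness obstacle you identify; the issue is genuine. Your second workaround, passing to the explicit \v Cech resolution $U^{\bt}$ of Proposition~\ref{Qcalc} and completing the free $R[q]$-modules $U^n$ directly, is the clean fix and is essentially how the paper operates in practice (cf.\ the proof of Proposition~\ref{mainthm2}, which works with the completed version of $U^{\bt}$). Second, the continuity of the $\Lambda$-structure on $\hat B$ follows, as you note, from $\Psi^n((q-1)^k)=(q-1)^k[n]_q^k$, together with torsion-freeness of $\hat B$ over $\Z$ (an inverse limit of torsion-free groups), so that the Adams operations determine the $\Lambda$-operations. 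With the free-resolution workaround in place the argument is complete and, indeed, makes precise what the paper leaves implicit.
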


% \begin{definition}\label{qhatdef}
%  Given a polynomial ring  $R[x_1, \ldots,x_d]$, define the completed $q$-de Rham complex $\widehat{q\mbox{-}\Omega}^{\bt}_{ R[x_1, \ldots,x_d]/R}$ to be the $(q-1)$-adic completion of $ q\mbox{-}\Omega^{\bt}_{ R[x_1, \ldots,x_d]/R}$.
% \end{definition}

\begin{definition}\label{qhatdef2}
 As in \cite[\S 3]{scholzeqdef}, given a formally \'etale map $\boxempty\co R[x_1, \ldots,x_d] \to A$, define $\widehat{q\mbox{-}\Omega}^{\bt}_{A/R, \boxempty}$ to be the complex
\[
 A\llbracket q-1\rrbracket \xra{\nabla_q} \Omega^1_{A/R}\llbracket q-1\rrbracket\xra{\nabla_q} \ldots \xra{\nabla_q} \Omega^d_{A/R}\llbracket q-1\rrbracket,
\]
where $\nabla_q$ is defined as follows. First note that the $R\llbracket q-1\rrbracket$-linear  ring endomorphisms $\gamma_i$ of $R[x_1, \ldots,x_d]\llbracket q-1\rrbracket$ given by $\gamma_i(x_j)= q^{\delta_{ij}}x_j$ extend uniquely to endomorphisms of $A\llbracket q-1\rrbracket$ which are the identity modulo $(q-1)$, then set
\[
 \nabla_q(f):= \sum_i \frac{\gamma_i(f)-f}{(q-1)x_i}dx_i.
\]
\end{definition}

Note that $\widehat{q\mbox{-}\Omega}^{\bt}_{ R[x_1, \ldots,x_d]/R}$ is just the $(q-1)$-adic completion of $ q\mbox{-}\Omega^{\bt}_{ R[x_1, \ldots,x_d]/R}$.

\begin{theorem}\label{mainthm2}
 If $R$ is a flat $\Lambda$-ring over $\Z$ and $\boxempty\co R[x_1, \ldots,x_d]\to A$ is a formally \'etale map of $\Lambda$-rings,  the elements $x_i$ having rank $1$, then there are zigzags of $R\llbracket q\rrbracket$-linear quasi-isomorphisms
\begin{align*}
\widehat{\qDR}( A/R) &\simeq  (\Omega^*_{A/R}\llb q -1\rrb, (q-1)\nabla_q), &
 \oL\eta_{(q-1)} \widehat{\qDR}( A/R) &\simeq \widehat{q\mbox{-}\Omega}^{\bt}_{A/R, \boxempty}.
\end{align*}
 The induced quasi-isomorphisms
\begin{align*}
\widehat{\qDR}( A/R)\ten^{\oL}_{R\llb q-1\rrb}R  &\simeq  (\Omega^*_{A/R}, 0), &
 (\oL\eta_{(q-1)} \widehat{\qDR}( A/R))\ten^{\oL}_{R\llb q-1\rrb}R &\simeq \Omega^{\bt}_{A/R}
\end{align*}
are independent of the choice of framing.
\end{theorem}
\begin{proof}
Since the framing $\boxempty$ is formally \'etale,  for any $(q-1)$-adically complete commutative $R[q]$-algebra $B$, any commutative square
\[
 \xymatrix{ R[x_1, \ldots,x_d] \ar[r] \ar[d]_{\boxempty} & B \ar[d] \\
A \ar[r] \ar@{-->}[ur] & B/(q-1).
}
\]
of $R$-algebra homomorphisms admits a unique dashed arrow as shown.

For any $(q-1)$-adically complete flat $\Lambda$-ring $B$ over $R$, we then have the same property for $\Lambda$-ring homomorphisms over $R$ instead of $R$-algebra homomorphisms: 
the diagram above gives a unique dashed $R$-algebra homomorphism, and uniqueness of lifts ensures that it commutes with Adams operations, so is a $\Lambda$-ring homomorphism ($R$ being flat over $\Z$). 
Similarly (taking $B= A\llb q-1 \rrb$) uniqueness of lifts ensures that the operations $\gamma_i$  are  $\Lambda$-ring  endomorphisms of $A\llb q-1 \rrb$.

We can now proceed as in  the proof of Theorem \ref{mainthm}. The complex $\widehat{\qDR}( A/R)$ can be realised as  the cochain complex underlying a cosimplicial $\Lambda$-ring $\hat{U}(A)$, representing the functor $\tilde{X}_{\strat}^q$ of Definition \ref{tildeXqdef} for $X = \Spec A$,
restricted to $(q-1)$-adically complete $\Lambda$-rings $B$. By the consequences of formal \'etaleness, we have
\begin{align*}
 &\Hom_{\Lambda,R}(A,B)\by_{\Hom_{\Lambda,R}(A,B/(q-1))} \Hom_{\Lambda,R}(A,B)\\
 &\cong \Hom_{\Lambda,R}(A,B)\by_{\Hom_{\Lambda,R}( R[x_1, \ldots,x_d],B/(q-1))} \Hom_{\Lambda,R}( R[x_1, \ldots,x_d],B), 
\end{align*}
giving $(\tilde{X}_{\strat}^q)_n \cong \Hom_{\Lambda,R}(A,B)\by_{\Hom_{\Lambda,R}( R[x_1, \ldots,x_d],B)} (\tilde{Y}_{\strat}^q)_n $ for each $n$, where $Y= \Spec  R[x_1, \ldots,x_d]$ and  the fibre product is given via  the projection of $(\tilde{Y}_{\strat}^q)_n$ onto the first factor. 

In particular, this means that $\hat{U}(A)^n$ is the $(q-1)$-adic completion of 
\[
A\ten_{ R[x_1, \ldots,x_d]}(U(R[x_1])^n\ten_{R[q]} \ldots \ten_{R[q]}U(R[x_d])^n), 
\]
where each $U(R[x_i])$ is a copy of the cosimplicial ring $U$ from Proposition \ref{Qcalc}.
This isomorphism respects the cosimplicial operations; note that $\pd^0$ is not linear for the left multiplication by $A$, but is still determined via formal \'etaleness of the framing.

We now define a cosimplicial cochain complex $\tilde{\Omega}^{\bt}(\hat{U}(A))$ by setting $\tilde{\Omega}^{\bt}(\hat{U}(A)^n) $ to be the $(q-1)$-adic completion of 
\begin{align*}
 &(A\ten_{ R[x_1, \ldots,x_d]} (\tilde{\Omega}^*(U(R[x_1])^n)\ten_{R[q]} \ldots \ten_{R[q]}\tilde{\Omega}^*(U(R[x_d])^n)), (q-1)\nabla_q)\\
&\cong (\hat{U}(A)^n\ten_{A^{\ten(n+1)}}(\Omega^*_{A/R})^{\ten(n+1)},(q-1)\nabla_q) ).
\end{align*}
where each  $\tilde{\Omega}^{\bt}(U(R[x_i]))$ is a copy of the complex  $\tilde{\Omega}^{\bt}(U^n)$ from the proof of Theorem \ref{mainthm}. %, and the cochain differentials $(q-1)\nabla_q$ are defined via the extension of the $\gamma_i$ to $A$. 
Compatibility of this construction with the cosimplicial operations follows because the $\gamma_i$ are  $\Lambda$-ring  homomorphisms.

 The calculations contributing to the proof of  Theorem \ref{mainthm} are still valid after base change, with contracting homotopies giving quasi-isomorphisms 
\[
(\Omega^*_{A/R}\llb q \rrb, (q-1)\nabla_q) \la \Tot N \tilde{\Omega}^{\bt}(\hat{U}(A)^{\bt}) \to N \hat{U}(A)^{\bt}.
 \]
Reduction of this  modulo $(q-1)^2$, or of its d\'ecalage modulo $(q-1)$ (cf. \cite[Proposition 6.12]{BhattMorrowScholze}), replaces $\nabla_q$ with $d$ throughout, removing any dependence on co-ordinates. %%again, I don't think we've really used Eilenberg--Zilber.
\end{proof}
 
As in \cite[Definition 7.3]{scholzeqdef}, there is a notion of $q$-connection $\nabla_q= (\nabla_{1,q}, \ldots, \nabla_{d,q})$ on a finite projective $A\llb q-1 \rrb$-module $M$, in the form of commuting $R\llb q-1 \rrb$-linear operators $\nabla_{i,q}$ on $M$, with each $\nabla_{i,q}$ satisfying $\nabla_{i,q}(av)= \nabla_{q,x_i}(a)v + \gamma_i(a) \nabla_{i,q}(v)$ for $a \in A, v \in M$.

\begin{definition}
 Given a flat morphism $R \to A$ of $\Lambda$-rings with $X:=\Spec A$,
denote  the forgetful functor $(B,f) \mapsto B$ from  $\hat{\Strat}^q_{A/R}$ to rings by $\sO_{\hat{X}^q,\strat}$. 

There is then a notion of  $\sO_{\hat{X}^q,\strat}$-modules in the category of functors from $\hat{\Strat}^q_{A/R}$ to abelian groups; we will simply refer to these as $\sO_{\hat{X}^q,\strat}$-modules. Given a property $P$ of modules, we will say that an $\sO_{\hat{X}^q,\strat}$-module $\sF$ has the property $P$ if for each $(B,f) \in \hat{\Strat}^q_{A/R}$, the $B$-module $\sF(B,f)$ has property $P$. 

We say that an $\sO_{\hat{X}^q,\strat}$-module $\sF$ is Cartesian if for each morphism $(B,f) \to (B',f')$ in $\hat{\Strat}^q_{A/R}$, the map $\sF(B,f)\ten_BB' \to \sF(B',f')$ is an isomorphism. 

Given an  $\sO_{\hat{X}^q,\strat}$-module $\sF$, we define $\Gamma(\hat{X}^q_{\strat},\sF):= \Lim_{\hat{\Strat}^q_{A/R} }\sF$.
\end{definition}

In \cite[Conjecture 7.5]{scholzeqdef}, Scholze predicted that the category of $q$-connections on finite projective $A\llb q-1 \rrb$-module is independent of co-ordinates on $A$. The following proposition gives the weaker statement  that the category depends only on the $\Lambda$-ring structure on $A$. 

\begin{proposition}\label{qconnprop}
Under the conditions of Theorem \ref{mainthm2}, with $X:=\Spec A$, the  category of finite projective $A\llb q-1 \rrb$-modules $(M,\nabla)$ with $q$-connection is  equivalent to the category of those finite projective $\sO_{\hat{X}^q,\strat}$-modules $\sN$ for which the map 
\[
 \Gamma(\hat{X}^q_{\strat},\sN/(q-1))\ten_A (\sO_{\hat{X}^q,\strat}/(q-1)) \to \sN/(q-1)
\]
 is an isomorphism.
\end{proposition}
\begin{proof}
The restriction on $\sN/(q-1)$ ensures that it is Cartesian; this also implies that $\sN$ is Cartesian, because finite projective modules are flat and $(q-1)$-adically complete. 

Now, the cosimplicial $\Lambda$-ring $\hat{U}(A)$ realising $\widehat{\qDR}( A/R)$  in the proof of Theorem \ref{mainthm2} admits a natural map $A \to \hat{U}(A)/(q-1)$ from the constant cosimplicial diagram. Thus  $\hat{U}(A)$ defines a cosimplicial diagram in $\hat{\Strat}^q_{A/R}$. Since the functor  $\tilde{X}_{\strat}^q$ of Definition \ref{tildeXqdef}
 resolves $X_{\strat}^q$, it follows that the functor $\hat{U}(A) \co \Delta  \to \hat{\Strat}^q_{A/R}$ from the simplex category is initial in the sense of \cite[\S IX.3]{mac}.

In particular, this means that the  category of Cartesian $\sO_{\hat{X}^q,\strat}$-modules $\sN$ is equivalent to the category of Cartesian cosimplicial $\hat{U}(A)$-modules $N$, where the Cartesian condition amounts to saying that the maps $N^m\ten_{\hat{U}(A)^m, \pd^i}\hat{U}(A)^{m+1} \to N^{m+1}$ are all isomorphisms. Setting $M=N^0$, Cartesian $\hat{U}(A)$-modules are equivalent to $\hat{U}(A)^0=A\llb q-1 \rrb$-modules $M$ with isomorphisms $\Delta \co (\pd^1)^*M \cong (\pd^0)^*M$ satisfying the cocycle condition $\pd^1\Delta = (\pd^0\Delta) \circ (\pd^2\Delta) \co (\pd^2\pd^0)^*M \to  (\pd^0\pd^0)^*M$.

The map $\Delta$ is determined by its restriction to $M$, so using the basis for $U^1$ from Lemma \ref{subringbasis}, and taking $v \in M$,  we have
\[
 \Delta(v) = \sum_{\uline{k} \in \N_0^d} \pd^0(\Delta_{\uline{k}}(v)) \lambda^{k_1}(\tfrac{\pd^1x_1-\pd^0x_1}{q-1})\cdots \lambda^{k_d}(\tfrac{\pd^1x_d-\pd^0x_d}{q-1})
\]
for $R\llb q-1 \rrb$-linear endomorphisms $\Delta_{\uline{k}}$ of $M$. Since $\lambda_t(a+b)= \lambda_t(a)\lambda_t(b)$, the cocycle condition becomes $\Delta_{\uline{j}+\uline{k}}= \Delta_{\uline{j}}\circ \Delta_{\uline{k}}$, meaning $\Delta$  is determined by the operators $\Delta_{e_i}$ at the basis vectors, which must moreover commute. 

Linearity of $\Delta$ with respect to $\hat{U}(A)^1$ then reduces to the condition that $\Delta( av) = \pd^1(a)\Delta(v)$ for $a \in A$, $v \in M$. Writing $A$ for $\pd^0A$ and $h_i^{[k]}:= \lambda^{k}(\tfrac{\pd^1x_i-\pd^0x_i}{q-1})$, 
 the ideal $J:=(h_i^{[\ge 2]},h_ih_j)_{i \ne j}$ satisfies $U^1 = A \oplus \bigoplus_i A h_i \oplus J$. The proof of Theorem \ref{mainthm2} gives $\pd^1(a) \equiv a+ (q-1)\sum_i \nabla_{q,x_i}(a)h_i \mod J$, and in $U^1/J$ we have $[h_i]^2 \equiv x_i[h_i]$. Comparing coefficients of $h_i$ in the equation $\Delta( av) \equiv \pd^1(a)\Delta(v) \mod J$ then gives %$\Delta_{e_i}(av)=  (q-1) \nabla_{q,x_i}(a) v + \gamma_i(a) \Delta_{e_i}(v)$.
\begin{align*}
  \Delta_{e_i}(av) &= (q-1) \nabla_{q,x_i}(a) v + a\Delta_{e_i}(v) + (q-1)x_i \nabla_{q,x_i}(a) \Delta_{e_i}(v)\\
 &=  (q-1) \nabla_{q,x_i}(a) v + \gamma_i(a) \Delta_{e_i}(v).
 \end{align*}

Finally, note that the condition that $\sN/(q-1)$ be the pullback of an  $A$-module (necessarily $\Gamma(\hat{X}^q_{\strat},\sN/(q-1)) $)  is equivalent to saying that $\pd^0_N \equiv \pd^1_N \mod (q-1)$, or that $(q-1)$ divides $ \Delta_{\uline{k}}$ whenever $\uline{k}\ne 0$. In particular, $(q-1)$ divides $\Delta_{e_i}$, and setting $\nabla_{i,q}:=  (q-1)^{-1}\Delta_{e_i}$ gives a $q$-connection $(\nabla_{i,q})_{1\le i\le d}$ on $M=N^0$ uniquely determining $\Delta$. 

The inverse construction is given by
$
 \Delta_{\uline{k}}= (q-1)^{\sum k_i} \nabla_{1,q}^{k_1}\circ \cdots \circ \nabla_{d,q}^{k_d}. 
$ 
% To  see why this expression has to satisfy the condition $\Delta( av) = \pd^1(a)\Delta(v)$, note that it must do for the  trivial connection on $A\llb q-1 \rrb$, so $\sum_{i,j} \nabla_q^i(a)\lambda^i  \nabla_q^j(b)\lambda^j = \sum_k \nabla^k_q(ab) \lambda^k$ for all $a,b \in A$. %Since $\nabla \circ \gamma= \gamma \circ \nabla$, we have $\nabla^k(av)= \sum_{i+j=k} \tbinom{k}{i}_q \gamma^j(\nabla^ia)\nabla^j(v)$, though don't even need that. 
% We then observe that Coeff of $\nabla^j(v)$ has to be smae on both sides, because it is so for $v \in A$.  
\end{proof}

\section{Comparisons for $\Lambda_P$-rings}

Since very few \'etale maps $R[x_1, \ldots,x_d]\to A$ give rise to $\Lambda$-ring structures on $A$,  Theorem \ref{mainthm2} is fairly limited in its scope for applications. We now show how the construction of $\widehat{\qDR}$ and the  comparison quasi-isomorphism survive  when we  weaken the $\Lambda$-ring structure by discarding Adams operations at invertible primes.

\subsection{$q$-cohomology for $\Lambda_P$-rings}

Our earlier constructions for $\Lambda$-rings all  carry over to $\Lambda_P$-rings, as follows.

\begin{definition}
Given a set $P$ of primes, we define a  $\Lambda_P$-ring $A$ to be a  $\Lambda_{\Z,P}$-ring in the sense of \cite{borgerBasicGeomI}. This means that it is a coalgebra in commutative rings for the comonad given by the functor $W^{(P)}$ of $P$-typical Witt vectors. When a commutative ring $A$ is flat over $\Z$, giving a $\Lambda_P$-ring structure on $A$ is equivalent to giving commuting Adams operations $\Psi^p$ for all $p \in P$, with $\Psi^p(a) \equiv a^p \mod p$ for all $a$.
\end{definition}
Thus when $P$ is the set of all primes, a $\Lambda_P$-ring is just a $\Lambda$-ring; a $\Lambda_{\emptyset}$-ring is just a commutative ring; for a single prime $p$, we write $\Lambda_p:= \Lambda_{\{p\}}$, and note that a $\Lambda_p$-ring is a $\delta$-ring in the sense of \cite{joyaldeltaWitt}.

\begin{definition}
 Given a $\Lambda_P$-ring $R$, say that $A$ is a $\Lambda_P$-ring over $R$ if it is a $\Lambda_P$-ring equipped with a morphism $R \to A$ of $\Lambda_P$-rings. We say that $A$ is a flat $\Lambda_P$-ring over $R$ if $A$ is flat as a module over the commutative ring   underlying $R$.
\end{definition}

\begin{definition}\label{qDRdefP}
Given a morphism $R \to A$ of $\Lambda_P$-rings, we define the category $\Strat^{q,P}_{A/R}$ to consist of flat $\Lambda_P$-rings $B$ over $R[q]$ equipped with a compatible  morphism $A \to B/(q-1)$, such that 
 the map $A \to   B/(q-1)$ admits a lift to $B$. We define the category $\hat{\Strat}^{q,P}_{A/R} \subset \Strat^q_{A/R}$ to consist of those objects which are $(q-1)$-adically complete.
\end{definition}
More concisely, $\Strat^{q,P}_{A/R}$  (resp. $\hat{\Strat}^{q,P}_{A/R}$) is the Grothendieck construction of the functor $(\Spec A)_{\strat}^{q,P}$ (resp. $\widehat{(\Spec A)}_{\strat}^{q,P}$) given by
\[
 B \mapsto \im(\Hom_{\Lambda_P,R}(A,B)\to \Hom_{\Lambda_P,R}(A,B/(q-1))) 
\]
on the category of flat $\Lambda_P$-rings (resp. $(q-1)$-adically complete flat $\Lambda_P$-rings) over $R[q]$. 

\begin{definition}
 Given a flat morphism $R \to A$ of $\Lambda_P$-rings, define $\qDR_P(A/R)$ to be the cochain complex of $R[q]$-modules given by taking the homotopy limit of
 the functor
\begin{align*}
 \Strat^{q,P}_{A/R} &\to \Ch(R[q])\\
B &\mapsto B.
\end{align*}
Define $\widehat{\qDR}_P(A/R)$ to be the cochain complex of $R\llbracket q-1 \rrbracket$-modules given by the corresponding homotopy limit over $\hat{\Strat}^{q,P}_{A/R}$.

For $p \in P$, the cochain complex $\qDR_P(A/R)$ naturally carries $(R[q], \Psi^p)$-semilinear operations $\Psi^p$ coming from the morphisms $\Psi^p \co B\ten_{R[q], \Psi^p}R[q] \to B$ of $R[q]$-modules, for $B \in \Strat^{q,P}_{A/R}$.
\end{definition}

Thus when $P$ is the set of all primes, we have $\qDR_P(A/R)= \qDR(A/R)$. At the other extreme, for $A$ smooth,  $\widehat{\qDR}_{\emptyset}(A/R)$ is the Rees construction of the Hodge filtration on the infinitesimal cohomology complex \cite{Gr} of $A$ over $R$ , with formal variable $(q-1)$. In more detail, there is a decreasing  filtration $F$ of $\sO_{\inf}$  given by powers of the augmentation ideal of  $\sO_{\inf} \to \sO_{\Zar}$ (with $F^{\nu}\sO_{\inf}=\sO_{\inf}$ for $\nu \le 0$), and then 
\[
\widehat{\qDR}_{\emptyset}(A/R) \simeq \prod_{\nu\in \Z} (q-1)^{-\nu}\oR\Gamma(\Spec A, F^{\nu}\sO_{\inf}).
\]

\begin{lemma}
 For a set $P$ of primes, the forgetful functor from $\Lambda$-rings to $\Lambda_P$-rings has a right adjoint $W^{(\notin P)}$. There is a canonical ghost component morphism 
\[
 W^{(\notin P)}(B) \to \prod_{\substack{n \in \N:\\ (n,p)=1 ~\forall p \in P}} B,
\]
which is an isomorphism when $P$ contains all the residue characteristics of %the $\Lambda_P$-ring 
$B$.
\end{lemma}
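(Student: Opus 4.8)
The plan is to realise $W^{(\notin P)}$ as the Witt-vector functor attached to a truncation set, using the comonadic description of $\Lambda$-structures, and then to read the ghost map and its behaviour off the classical Witt formulas. Recall from \cite{borgerBasicGeomI} that for a multiplicative submonoid $M\subseteq(\N,\times)$ the big-Witt functor $W_M$ is a comonad on commutative rings whose coalgebras are precisely the $\Lambda_M$-rings: taking $M=\N$ recovers $\Lambda$-rings, taking $M=\N_P$ (the submonoid generated by $P$) recovers $\Lambda_P$-rings, and the forgetful functor is induced by the comonad map $W_\N\to W_{\N_P}$ that discards the Witt components not indexed by $\N_P$. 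Unique factorisation gives an internal direct product $\N=\N_P\times S$ with $S:=\{n\in\N\co (n,p)=1\ \forall p\in P\}$, and since the sets of primes dividing elements of $\N_P$ and of $S$ are disjoint, the Witt comonad decomposes along this coprime factorisation (\cite{borgerBasicGeomI}), so a $\Lambda$-structure is the same datum as a $\Lambda_P$-structure together with a commuting $\Lambda_S$-structure. Hence the cofree-$\Lambda_S$ functor, which on underlying rings is the truncation-set Witt ring $W_S$, sends a $\Lambda_P$-ring $B$ to a $\Lambda$-ring $W^{(\notin P)}(B):=W_S(B)$, its intrinsic $\Lambda_S$-structure and the $\Lambda_P$-structure induced functorially from that of $B$ commuting; and this is the desired right adjoint, the counit $W^{(\notin P)}(B)\to B$ being the first Witt-component map $a\mapsto a_1$ (a $\Lambda_P$-morphism), the unit $A\to W^{(\notin P)}(A)$ recording the $\Lambda_S$-part of the $\Lambda$-structure on $A$, and the triangle identities reducing to the coalgebra axioms. (This adjunction and identification are recorded in \cite{borgerBasicGeomI}.)

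With $W^{(\notin P)}(B)=W_S(B)$ in hand, the ghost-component morphism of the statement is nothing but the ghost map of $W_S$,
\[
 w=(w_n)_{n\in S}\co W^{(\notin P)}(B)\longrightarrow \prod_{n\in S}B,\qquad w_n(a)=\sum_{d\mid n}d\,a_d^{n/d},
\]
a natural homomorphism of commutative rings (indeed of $\Lambda$-rings) whose source and target index set $S$ are exactly as in the statement. Now ``$P$ contains all the residue characteristics of $B$'' means precisely that every prime $\ell\notin P$ is a unit in $B$, i.e.\ that $B$ is a $\Z[\{\ell^{-1}\}_{\ell\notin P}]$-algebra; since every $n\in S$ has all its prime factors outside $P$, this forces every $n\in S$ to be invertible in $B$. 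Writing the ghost formula as $w_n(a)=n\,a_n+(\text{a universal }\Z\text{-polynomial in the }a_d,\ d\mid n,\ d<n)$, one solves recursively for the Witt components $a_n$ in terms of the ghost components $w_n(a)$, every required division by an integer $n\in S$ being legitimate; thus $w$ is bijective, and being a ring homomorphism it is an isomorphism. (Conversely, if some $\ell\notin P$ fails to be a unit then already $w_\ell$ is not surjective, so the hypothesis is sharp; this is not needed for the lemma.)

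The part that needs the most care is the first paragraph — the comonad/coalgebra bookkeeping behind the coprime decomposition $\N=\N_P\times S$, namely checking that applying the truncation-set Witt functor $W_S$ compatibly with the ambient $\Lambda_P$-structure really yields a functor $\Lambda_P\text{-rings}\to\Lambda\text{-rings}$ with the asserted universal property, so that in particular the underlying ring of the right adjoint is the Witt ring $W_S$; this is exactly Borger's ``co-induction'' and is carried out in \cite{borgerBasicGeomI}. Once $W^{(\notin P)}(B)$ is known to be $W_S(B)$, everything about the ghost map is the textbook Witt-vector computation, the only extra input being the elementary translation of ``residue characteristic'' into ``non-unit prime''.
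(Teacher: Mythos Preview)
Your proof is correct and follows essentially the same approach as the paper: both invoke the comonadic framework of \cite{borgerBasicGeomI} to produce the right adjoint, and both read the ghost map off the resulting Witt-vector description. The only cosmetic difference is that the paper phrases the ghost components as ``Adams operations followed by projection to $B$'' and deduces the isomorphism from the observation that, over a ring whose residue characteristics all lie in $P$, a $\Lambda$-structure is just a $\Lambda_P$-structure together with commuting Adams operations at the remaining primes, whereas you unwind the same fact via the explicit Witt ghost polynomials and their recursive inversion.
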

\begin{proof}
 Existence of a right adjoint follows from the comonadic definitions of $\Lambda$-rings and $\Lambda_P$-rings. The ghost component morphism is given by taking the Adams operations $\Psi^n$ 
coming from the $\Lambda$-ring structure on $W^{(\notin P)}(B)$, followed by projection to $B$. When $P$ contains all the residue characteristics of $B$, a $\Lambda$-ring structure is the same as a $\Lambda_P$-ring structure with compatible commuting Adams operations for all primes not in $P$, leading to the description above.
\end{proof}

Note that the big Witt vector functor $W$ on commutative rings thus factorises as $W= W^{(\notin P)} \circ W^{(P)}$, for $W^{(P)}$ the $P$-typical Witt vectors.

\begin{proposition}\label{Ponlyprop}
  Given a morphism $R \to A$ of $\Lambda$-rings, and a set $P$ of primes, there are natural maps
\begin{align*}
 \qDR_P(A/R) &\to \qDR(A/R), &
\widehat{\qDR}_P(A/R) &\to \widehat{\qDR}(A/R),
\end{align*}
and the latter map is a quasi-isomorphism when $P$ contains all the residue characteristics of $A$.
\end{proposition}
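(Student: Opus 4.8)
\emph{The maps.} The forgetful functor sends a $\Lambda$-ring to a $\Lambda_P$-ring, and a $\Lambda$-ring lift of $A\to B/(q-1)$ is a fortiori a $\Lambda_P$-ring lift, so it induces a functor $\Strat^q_{A/R}\to\Strat^{q,P}_{A/R}$ (and likewise for the $(q-1)$-adically complete variants) which is the identity on underlying rings; taking the homotopy limit of $B\mapsto B$ along it gives the natural maps $\qDR_P(A/R)\to\qDR(A/R)$ and $\hat{\qDR}_P(A/R)\to\hat{\qDR}(A/R)$.

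\emph{The quasi-isomorphism.} The plan is to show that the cosimplicial rings computing the two completed complexes literally coincide. Resolving $(\Spec A)^{q,P}_{\strat}$ (resp.\ $(\Spec A)^q_{\strat}$) by the \v{C}ech nerve of $\Hom_{\Lambda_P,R}(A,B)\to\Hom_{\Lambda_P,R}(A,B/(q-1))$ (resp.\ of $\Hom_{\Lambda,R}(A,-)\to\Hom_{\Lambda,R}(A,-/(q-1))$), exactly as in Proposition \ref{Qcalc} and the proof of Proposition \ref{mainthm2}, one realises $\hat{\qDR}_P(A/R)$ (resp.\ $\hat{\qDR}(A/R)$) as the homotopy limit of a cosimplicial $\Lambda_P$-ring $U_P^{\bt}$ (resp.\ $\Lambda$-ring $U^{\bt}$), where $U_P^n$ (resp.\ $U^n$) is the $(q-1)$-adically complete $\Lambda_P$-subring (resp.\ $\Lambda$-subring) of $A^{\ten_R(n+1)}\llb q-1\rrb[\{(q^m-1)^{-1}\}_{m\ge 1}]$ generated by $A^{\ten_R(n+1)}\llb q-1\rrb$ and $(q-1)^{-1}\ker(A^{\ten_R(n+1)}\to A)\llb q-1\rrb$, the natural map being the evident inclusion $U_P^{\bt}\into U^{\bt}$. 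So it suffices to prove $U_P^n=U^n$ for all $n$.

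As $\Lambda$-closures contain $\Lambda_P$-closures, $U_P^n\subseteq U^n$ is automatic; for the reverse inclusion I would show that $U_P^n$ is already stable under the Adams operations $\Psi^p$, $p\notin P$, of the ambient $\Lambda$-ring, so that — being $(q-1)$-adically complete and closed under all $\Psi^n$ — it is a $\Lambda$-subring containing the generators of $U^n$. The crucial observation is that $1/p\in U_P^n$ for every $p\notin P$: by hypothesis $P$ contains the residue characteristics of $A$, so $p$ is invertible in $A$, hence in the subring $A^{\ten_R(n+1)}\llb q-1\rrb\subseteq U_P^n$; since $[p]_q\equiv p\pmod{q-1}$ and $(q-1)$ lies in the Jacobson radical of the $(q-1)$-adically complete ring $U_P^n$, it follows that $[p]_q$ is a unit in $U_P^n$. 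Granting this, $\Psi^p$ maps $A^{\ten_R(n+1)}\llb q-1\rrb$ into itself and maps a generator $b/(q-1)$, $b\in\ker(A^{\ten_R(n+1)}\to A)\llb q-1\rrb$, to
\[
\Psi^p\!\left(\tfrac{b}{q-1}\right)=\frac{\Psi^p(b)}{q^p-1}=[p]_q^{-1}\cdot\frac{\Psi^p(b)}{q-1}\in U_P^n,
\]
using that $\Psi^p$ preserves $\ker(A^{\ten_R(n+1)}\to A)$ (the multiplication map being a morphism of $\Lambda$-rings) and that $[p]_q^{-1}\in U_P^n$; since $\Psi^p$ is a ring endomorphism commuting with all $\Psi^\ell$, $\ell\in P$, and $U_P^n$ is $(q-1)$-adically complete, this gives $\Psi^p(U_P^n)\subseteq U_P^n$, hence $U^n\subseteq U_P^n$. (Conceptually, the same residue-characteristic observation shows, via the Lemma preceding this proposition, that $W^{(\notin P)}$ restricts to a right adjoint of the forgetful functor $\hat{\Strat}^q_{A/R}\to\hat{\Strat}^{q,P}_{A/R}$, making the latter homotopy initial; this is really the same argument.)

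The hard part is exactly the invertibility of $[p]_q$ in $U_P^n$, which is where $(q-1)$-adic completeness is indispensable: before completion one knows only that $[p]_q$ becomes a unit modulo $(q-1)$, and then the displayed formula shows $\Psi^p(b/(q-1))$ need not lie in the uncompleted $\Lambda_P$-subring — which is why the analogue for $\qDR_P(A/R)\to\qDR(A/R)$ genuinely can fail. The remaining, purely bookkeeping, point is to check (following the proofs of Propositions \ref{Qcalc} and \ref{mainthm2}) that $\hat{\qDR}_P$ and $\hat{\qDR}$ really are computed by these cosimplicial subrings for an arbitrary flat $\Lambda$-ring $A$, and that the identification matches the natural map once $U_P^{\bt}=U^{\bt}$.
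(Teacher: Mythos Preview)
Your argument and the paper's share the decisive observation --- that $[p]_q$ (more generally $[n]_q$ for $n$ coprime to the residue characteristics) becomes a unit in the $(q-1)$-adically complete setting --- but they deploy it differently. The paper works entirely at the level of presheaves: using the right adjoint $W^{(\notin P)}$ of the forgetful functor (your parenthetical remark), it constructs a natural transformation $(\Spec A)_{\strat}^q\circ W^{(\notin P)}\to(\Spec A)_{\strat}^{q,P}$ of functors on flat $\Lambda_P$-rings and shows it is an isomorphism on $(q-1)$-adically complete objects admitting a map from $A$, since for such $B$ the ghost components identify $W^{(\notin P)}B$ with $\prod_n B$ and the comparison map $(W^{(\notin P)}B)/(q-1)\to W^{(\notin P)}(B/(q-1))$ becomes $\prod_n B/(q^n-1)\to\prod_n B/(q-1)$, an isomorphism once each $[n]_q$ is invertible. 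Your main argument instead passes to explicit cosimplicial representatives $U_P^{\bt}\hookrightarrow U^{\bt}$ and verifies they coincide by checking directly that each $\Psi^p$, $p\notin P$, preserves $U_P^n$.

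Your route is concrete and the $\Psi^p$-stability calculation is correct, but the step you label ``purely bookkeeping'' is exactly where the two approaches differ in strength. Realising $\hat{\qDR}$ and $\hat{\qDR}_P$ by $\Lambda$- and $\Lambda_P$-subrings of $A^{\ten_R(n+1)}\llb q-1\rrb[\{(q^m-1)^{-1}\}_m]$ presupposes that this localisation is a $\Lambda$-ring (needing torsion-freeness, i.e.\ $R$ flat over $\Z$ as in Proposition~\ref{mainthm2}) and that the resulting $U^n$ are flat over $R[q]$ so as actually to represent the \v Cech nerve --- hypotheses absent from the proposition as stated. The paper's presheaf argument bypasses representability entirely and needs no such assumptions. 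So your parenthetical is not just a conceptual repackaging of your main argument: it \emph{is} the paper's proof, and it is what is required in the stated generality.
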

\begin{proof}
We have functors
 \begin{align*}
  (\Spec A)_{\strat}^q\circ W^{(\notin P)} \co B &\mapsto \im(\Hom_{\Lambda,R}(A,W^{(\notin P)} B)\to \Hom_{\Lambda,R}(A,(W^{(\notin P)} B)/(q-1)))\\
(\Spec A)_{\strat}^{q,P} \co  B &\mapsto \im(\Hom_{\Lambda_P,R}(A,B)\to \Hom_{\Lambda_P,R}(A,B/(q-1)))
 \end{align*}
on the category of flat $\Lambda_P$-rings over  $R[q]$. There is an obvious map 
\[
 (W^{(\notin P)} B)/(q-1) \to W^{(\notin P)}(B/(q-1)),
\]
 and hence a natural transformation $(\Spec A)_{\strat}^q\circ W^{(\notin P)} \to (\Spec A)_{\strat}^{q,P}$, which induces the morphism $\qDR_P(A/R) \to \qDR(A/R)$ on cohomology.

When $P$ contains all the residue characteristics of $A$, the map $ (W^{(\notin P)} B)/(q-1) \to W^{(\notin P)}(B/(q-1))$ is just
\[
  \prod_{\substack{n \in \N:\\ (n,p)=1 ~\forall p \in P}} B/(q^n-1) \to  \prod_{\substack{n \in \N:\\ (n,p)=1 ~\forall p \in P}} B/(q-1),
\]
since the morphism $R[q] \to W^{(\notin P)} B$ is given by Adams operations, with $\Psi^n(q-1)= q^n-1$. 

We have $(q^n-1)=(q-1)[n]_q$, and $[n]_q$ is a unit in $\Z[\tfrac{1}{n}]\llbracket q-1 \rrbracket$, hence a unit in $B$ when $n$ is coprime to the residue characteristics. Thus the map $ (W^{(\notin P)} B)/(q-1) \to W^{(\notin P)}(B/(q-1))$ gives an isomorphism whenever $B$ is $(q-1)$-adically complete and admits a map from $A$, so the transformation $(\Spec A)_{\strat}^q\circ W^{(\notin P)} \to (\Spec A)_{\strat}^{q,P}$ is a natural  isomorphism on the category of flat $(q-1)$-adically complete $\Lambda_P$-rings over  $R[q]$, and hence
$
 \widehat{\qDR}_P(A/R) \xra{\simeq} \widehat{\qDR}(A/R)
$.
\end{proof}

\begin{remark}\label{cosimplicialrmkP}
 Remark \ref{cosimplicialrmk} shows that $\qDR(A/R)$ can naturally be promoted to a cosimplicial $\Lambda$-ring, and the same reasoning promotes $\qDR_P(A/R)$ to a cosimplicial $\Lambda_P$-ring.
The proof of Proposition \ref{Ponlyprop} then ensures that the map $\qDR_P(A/R) \to \qDR(A/R)$ is naturally a morphism of cosimplicial $\Lambda_P$-rings,  
\end{remark}

Over $\Z[\{\frac{1}{p}\co p \in P\}]$, every $\Lambda_P$-ring can be canonically made into a $\Lambda$-ring, by setting all the additional Adams operations to be the identity. However, this observation is of limited use in  establishing functoriality of $q$-de Rham cohomology, because the resulting $\Lambda$-ring structure will not satisfy the conditions of Theorem \ref{mainthm2}. We now give a more general result which does allow for meaningful comparisons.

\begin{theorem}\label{mainthm3}
 If $R$ is a flat $\Lambda_P$-ring over $\Z$ and $\boxempty\co R[x_1, \ldots,x_d]\to A$ is a formally \'etale map of $\Lambda_P$-rings,  the elements $x_i$ having rank $1$, then there are zigzags of $R\llbracket q-1\rrbracket$-linear quasi-isomorphisms
\begin{align*}
\widehat{\qDR_P}( A/R) &\simeq  (\Omega^*_{A/R}\llb q -1\rrb, (q-1)\nabla_q), &
 \oL\eta_{(q-1)} \widehat{\qDR}_P( A/R) &\simeq \widehat{q\mbox{-}\Omega}^{\bt}_{A/R, \boxempty}.
\end{align*}
whenever $P$ contains all the residue characteristics of $A$.
\end{theorem}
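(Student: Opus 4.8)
The strategy is to reprove Proposition \ref{mainthm2} almost verbatim, replacing $\Lambda$-rings by $\Lambda_P$-rings throughout; the one genuinely new ingredient will be a $\Lambda_P$-analogue of Lemmas \ref{lambdalemma} and \ref{subringbasis}, which is exactly what the residue-characteristic hypothesis provides. First I would set up the cosimplicial model as in Propositions \ref{Qcalc} and \ref{mainthm2}: resolve $(\Spec A)^{q,P}_{\strat}$ by the \v Cech nerve of $\Hom_{\Lambda_P,R}(A,B)\to\Hom_{\Lambda_P,R}(A,B/(q-1))$, and, using that $\boxempty$ is formally \'etale — so that $\widehat{A^{\ten_R(n+1)}}\cong A\llb (x_i^{(k)}-x_i^{(0)})_{1\le i\le d,\ 1\le k\le n}\rrb$, and that a ring map out of a formally \'etale $\Lambda_P$-algebra is a $\Lambda_P$-map as soon as it is one on the base (uniqueness of $(q-1)$-adic lifts, using that $A\to B/(q-1)$ is already a $\Lambda_P$-map in the data) — identify the representing object of the $n$-simplices on $(q-1)$-adically complete flat $\Lambda_P$-rings $B$ over $R[q]$ with the $(q-1)$-adically complete $\Lambda_P$-subring $U^n$ of $A^{\ten_R(n+1)}\llb q-1\rrb[\{(q^m-1)^{-1}\}_{m\ge1}]$ generated by $A^{\ten_R(n+1)}\llb q-1\rrb$ and the elements $z_i^{(k)}:=(x_i^{(k)}-x_i^{(k-1)})/(q-1)$; flatness of $B$ together with $B\hookrightarrow B[\{(q^m-1)^{-1}\}]$ makes the map from the relevant free $\Lambda_P$-ring factor through $U^n$, exactly as in Proposition \ref{Qcalc}, once $U^n$ is known flat over $R\llb q-1\rrb$.

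The new step is the computation of $U^n$. Because $P$ contains all residue characteristics of $A$, every prime $\ell\notin P$ is invertible in $A$, hence in the ambient ring $E:=A^{\ten_R(n+1)}\llb q-1\rrb[\{(q^m-1)^{-1}\}]$, which is therefore a $\Z[\frac1P]$-algebra; so, as in the remark preceding the theorem, the $\Lambda_P$-structure on $E$ extends canonically to a $\Lambda$-structure with $\Psi^\ell=\id$ for $\ell\notin P$, and since these extra Adams operations act as the identity the $\Lambda_P$-subring $U^n$ coincides with the $\Lambda$-subring of $E$ generated by the same elements — in particular the $q$-divided powers $\lambda^r(z_i^{(k)})$ make sense and lie in $U^n$, and the denominators $[m]_q!$ in Lemma \ref{lambdalemma} are harmless. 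Running the argument of Lemma \ref{subringbasis} for the elements $z_i^{(k)}$ over $A\llb q-1\rrb$ — the $x_i^{(k)}$ being still of rank $1$ for the Adams operations in $P$, and the \'etale directions being absorbed into the $(q-1)$-adic completion via $\widehat{A^{\ten_R(n+1)}}\cong A\llb (x_i^{(k)}-x_i^{(0)})\rrb$ — should exhibit $\{\prod_{i,k}\lambda^{r_{i,k}}(z_i^{(k)})\}$ as a topological basis of $U^n$ over $A\llb q-1\rrb$, so that $U^n$ is a $(q-1)$-adically completed free $R\llb q-1\rrb$-module, in particular flat.

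With $U^\bt$ in hand the remainder follows the proofs of Theorem \ref{mainthm} and Proposition \ref{mainthm2}. Uniqueness of $\Lambda_P$-lifts along $\boxempty$ shows the endomorphisms $\gamma_i\co x_j\mapsto q^{\delta_{ij}}x_j$ extend to $\Lambda_P$-endomorphisms of $A\llb q-1\rrb$ that are the identity modulo $(q-1)$. Forming the $(q-1)$-adic completions $\tilde\Omega^\bt(U^n)$ of $(U^n\ten_{A^{\ten_R(n+1)}}(\Omega^*_{A/R})^{\ten(n+1)},(q-1)\nabla_q)$, the identity $(q-1)\nabla_{q,x_i^{(k)}}\lambda^r(z_i^{(k)})=\lambda^{r-1}(z_i^{(k)})\,dx_i^{(k)}$ makes the differential well-defined, makes each inclusion $\tilde\Omega^\bt(U^0)\hookrightarrow\tilde\Omega^\bt(U^n)$ a quasi-isomorphism, and makes $\tilde\Omega^i(U^\bt)$ acyclic for $i>0$; the resulting quasi-isomorphisms $U^\bt\la\Tot\tilde\Omega^\bt(U^\bt)\to\tilde\Omega^\bt(U^0)$ then give $\widehat{\qDR}_P(A/R)\simeq(\Omega^*_{A/R}\llb q-1\rrb,(q-1)\nabla_q)$. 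Applying $\oL\eta_{(q-1)}$ — which is the naive d\'ecalage here, the terms being $(q-1)$-torsion-free — and noting $\eta_{(q-1)}(\Omega^*_{A/R}\llb q-1\rrb,(q-1)\nabla_q)=\widehat{q\mbox{-}\Omega}^\bt_{A/R,\boxempty}$ for $\nabla_q$ the operator of Definition \ref{qhatdef2} built from the $\gamma_i$, yields the asserted zigzag.

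I expect the main obstacle to be the second paragraph: one must check carefully that passing to the extended $\Lambda$-structure — which does spoil the literal rank-$1$ property of the $x_i^{(k)}$ for the Adams operations outside $P$, so that Lemma \ref{subringbasis} cannot simply be quoted — nonetheless delivers the same divided-power basis, equivalently that the $\lambda^r(z_i^{(k)})$ lie in $U^n$ and are linearly independent over $A\llb q-1\rrb$ (a $\Lambda_P$-flavoured rerun of Lemmas \ref{lambdalemma}--\ref{subringbasis}, the point being that all the integers entering the denominators that are coprime to $P$ are invertible). Once this flatness-with-explicit-basis statement is established, every other step is a mechanical transcription of the $\Lambda$-ring arguments, and independence of the framing follows, as in Proposition \ref{mainthm2}, upon reduction modulo $(q-1)$.
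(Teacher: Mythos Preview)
Your overall architecture matches the paper's proof: represent $\widehat{\qDR}_P(A/R)$ by a cosimplicial $\Lambda_P$-ring $\hat U^{\bt}_{P,A}$, use formal \'etaleness of $\boxempty$ (your ``unique $\Lambda_P$-lifts'' observation is exactly the paper's key observation) to control the completion of $A^{\ten_R(n+1)}$, and then run the $\tilde\Omega^{\bt}$ argument from Theorem \ref{mainthm}. Where you diverge from the paper is precisely at the step you flag as the obstacle, and your proposed mechanism there does not work.

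Your plan is to extend the $\Lambda_P$-structure on the ambient ring $E$ to a $\Lambda$-structure by declaring $\Psi^{\ell}=\id$ for $\ell\notin P$, observe that the $\Lambda_P$-subring is then automatically a $\Lambda$-subring, and conclude that the $q$-divided powers $\lambda^r(z_i^{(k)})$ lie in $U^n$. The problem is that those $q$-divided powers are the values of the $\lambda$-operations for a \emph{different} $\Lambda$-structure --- the one in which the $x_i^{(k)}$ are rank~$1$ --- and that structure does not exist on $E$ once you have set $\Psi^{\ell}=\id$. The $\lambda^r$ for your extended structure are unrelated to the formulas of Lemma \ref{lambdalemma}, so the fact that $U^n$ is closed under them tells you nothing about whether it contains $\tfrac{(y-x)(y-qx)\cdots(y-q^{k-1}x)}{(q-1)^k[k]_q!}$. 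Your acknowledgement of this in the final paragraph is correct; the ``$\Lambda_P$-flavoured rerun'' you gesture at would require a genuinely new argument (the denominators $[k]_q!$ involve primes in $P$, so are not units).

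The paper sidesteps this entirely. Rather than compare $\Lambda$-structures on $E$, it first uses the unique lifting property to get the explicit base-change isomorphism
\[
\hat U^{n}_{P,A}\;\cong\;\hat U^{n}_{P,R[x_1,\ldots,x_d]}\,\hten_{R[x_1,\ldots,x_d]}\,A,
\]
reducing to the polynomial framing, and then invokes the mechanism of Proposition \ref{Ponlyprop} --- the adjunction with $W^{(\notin P)}$ and the fact that $[n]_q$ is a unit in a $(q-1)$-adically complete ring with $n$ invertible --- to identify $\hat U^{\bt}_{P,R[x_1,\ldots,x_d]}$ with the $(q-1)$-adic completion of the cosimplicial ring $U^{\bt}$ already computed in Proposition \ref{Qcalc}. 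This uses the residue-characteristic hypothesis in exactly the way Proposition \ref{Ponlyprop} does, rather than through any direct manipulation of $\lambda$-operations. Once you have this identification, the rest of your proof (your third paragraph) is correct and matches the paper.
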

\begin{proof}
The key observation to make is that formally \'etale maps have a unique lifting property with respect to nilpotent extensions of flat $\Lambda_P$-rings, because the Adams operations must also lift uniquely.  In particular, this means that the operations $\gamma_i$ featuring in the definition of $q$-de Rham cohomology are necessarily endomorphisms of $A$ as a $\Lambda_P$-ring.

Similarly to Theorem \ref{mainthm2},  $\widehat{\qDR}_P( A/R)$ is calculated using a cosimplicial $\Lambda_P$-ring given in level $n$ by the $(q-1)$-adic completion $\hat{U}_{P,A}^{\bt}$ of the $\Lambda_P$-ring  over $R[q]$ generated by $A^{\ten_R(n+1)}[q]$ and $(q-1)^{-1}\ker( A^{\ten_R(n+1)}\to A)[q]$. The observation above shows that $\hat{U}_{P,A}^n \cong \hat{U}_{P,R[x_1, \ldots,x_d]}^n\hten_{ R[x_1, \ldots,x_d]}A$, changing base along $\boxempty$ applied to the first factor. 

As in Proposition \ref{Ponlyprop}, $\hat{U}_{P,R[x_1, \ldots,x_d]}^{\bt}$ is just the $(q-1)$-adic completion of the complex $U^{\bt}$ from Proposition \ref{Qcalc}.
Further application of the key observation above then allows us to  adapt the constructions of Theorem \ref{mainthm}, giving the desired quasi-isomorphisms.

\end{proof}

\subsection{Cartier isomorphisms in mixed characteristic}\label{cartiersn}

In \cite[Conjecture 7.1]{scholzeqdef}, Scholze predicted that $\widehat{q\mbox{-}\Omega}^{\bt}_{A/R, \boxempty}$ is a functorial invariant of the $R$-algebra $A$, independent of the choice of framing, so extends to all smooth schemes. Theorem \ref{mainthm3} shows that $\widehat{q\mbox{-}\Omega}^{\bt}_{A/R, \boxempty}$ is functorial invariant of the $\Lambda_P$-ring $A$ over $R$.

 The only setting in which Theorem \ref{mainthm3} leads to results  close to Scholze's conjecture is when $R=W^{(p)}(k)$, the $p$-typical Witt vectors of a  perfect field of characteristic $p$, and $A=\Lim_n A_n$ is a formal deformation of a smooth $k$-algebra $A_0$. %%, with $A_n$ a deformation over $R_n:=W_n^{(p)}(k)$. 
Then any formally \'etale morphism $W^{(p)}(k)[x_1, \ldots,x_d]\to A$ of topological rings gives rise to a unique compatible lift $\Psi^p$ of absolute Frobenius on $A$ with $\Psi^p(x_i)=x_i^p$, so gives $A$ the structure of a topological $\Lambda_p$-ring. The framing still affects the choice of $\Lambda_p$-ring structure, but at least such a structure is guaranteed to exist, giving rise to a complex $\qDR_P(A/R)^{\wedge_p}:= \oR\Lim_n \qDR_p(A/R)\ten_R^{\oL}R_n$ depending only on the choice of $\Psi^p$, where $R_n=W_n^{(p)}(k)$. 

%%can't put $R_n$ inside brackets, as $A_n$ not $\Lambda$-ring.

Our constructions now allow us to globalise the quasi-isomorphism 
\[
 (\widehat{q\mbox{-}\Omega}^{\bt}_{A/R, \boxempty})^{\wedge_p}/[p]_q \simeq (\Omega^*_{A/R})^{\wedge_p}\llbracket q-1 \rrbracket/[p]_q
\]
of \cite[Proposition 3.4]{scholzeqdef}, where $\Omega^*_{A/R}$ denotes the complex $A \xra{0} \Omega^1_{A/R} \xra{0}  \Omega^2_{A/R} \xra{0}\ldots $.

\begin{lemma}\label{PsiqDRlemma}
 Under the quasi-isomorphism $\widehat{\qDR}_p( A/R) \simeq (\Omega^*_{A/R}\llbracket q-1 \rrbracket, (q-1)\nabla_q)$ from Theorem \ref{mainthm3}, the semilinear Adams operation $\Psi^{p}$  on $ \widehat{\qDR}_p( A/R) $ described in Definition \ref{qDRdef} corresponds to the operation on $\Omega^*_{A/R}\llbracket q-1 \rrbracket$ given by setting
\[
\Psi^p (a dx_{i_1}\wedge \ldots\wedge dx_{i_m}):= \Psi^p(a) x_{i_1}^{p-1}\ldots x_{i_m}^{p-1} dx_i\wedge \ldots\wedge dx_{i_m}.
\]
for $a \in A\llb q-1 \rrb$.
\end{lemma}
\begin{proof}
Just observe that this expression defines a chain map on $(\Omega^*_{A/R}\llbracket q-1 \rrbracket, (q-1)\nabla_q)$ (for instance $\Psi^p((q-1)\nabla_qx_i)= (q^p-1)\Psi^p(dx)= (q-1)\nabla_qx_i^p$), and that the quasi-isomorphisms  in the proof of Theorem \ref{mainthm2} commute with these operations.
\end{proof}

As in \cite[\S 4]{scholzeqdef}, we refer to formal schemes over $W^{(p)}(k)$  as smooth if they are flat deformations of smooth schemes over $k$. We refer to morphisms of such schemes as \'etale if they are flat deformations of \'etale morphisms over $k$. 

\begin{proposition}\label{cartierprop}
 Take  a smooth formal scheme $\fX$ over $R=W^{(p)}(k)$ equipped with a lift $\Psi^p$  of Frobenius which \'etale locally admits  co-ordinates $\{x_i\}_i$ as above with $\Psi^p(x_i)=x_i^p$. Then there is a global  quasi-isomorphism
\[
C_q^{-1}\co  (\Omega^*_{\fX/R})^{\wedge_p}\llbracket q-1 \rrbracket/[p]_q\to (\oL\eta_{(q-1)}\widehat{\qDR}_p(\sO_{\fX}/R))^{\wedge_p}/[p]_q
\]
in the derived category of \'etale sheaves on $\fX$.  
\end{proposition}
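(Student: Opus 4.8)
The plan is to glue the local statement. Since $\hat{\qDR}_p$ is defined through the intrinsic category $\hat{\Strat}^{q,p}$ and commutes with localisation, the target $(\oL\eta_{(q-1)}\hat{\qDR}_p(\sO_{\fX}/R))^{\wedge_p}/[p]_q$ is a well-defined object of the derived category $D(\fX_{\et})$, obtained by sheafifying $A\mapsto(\oL\eta_{(q-1)}\hat{\qDR}_p(A/R))^{\wedge_p}/[p]_q$ over affine \'etale patches, while the source $(\Omega^*_{\fX/R})^{\wedge_p}\llbracket q-1\rrbracket/[p]_q$ is a bounded complex of locally free $\sO_{\fX}$-modules with zero differential. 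It therefore suffices to produce the map on an \'etale hypercover of $\fX$ by coordinate patches, together with coherent gluing data.

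Locally, let $\boxempty\colon R[x_1,\ldots,x_d]\to A$ frame such a patch. Its induced lift of Frobenius agrees with the globally given $\Psi$ by uniqueness of lifts for formally \'etale maps of $\Lambda_p$-rings (as in the proof of Theorem \ref{mainthm3}); in particular $\Psi$ acts on $\hat{\qDR}_p(A/R)$ and on $A\llbracket q-1\rrbracket$ with $\Psi(x_i)=x_i^p$ and $\Psi(q)=q^p$, since $q$ has rank $1$. Theorem \ref{mainthm3} gives a zigzag $\oL\eta_{(q-1)}\widehat{\qDR}_p(A/R)\simeq\widehat{q\mbox{-}\Omega}^{\bt}_{A/R,\boxempty}$, $p$-completed and reduced modulo $[p]_q$, and \cite[Proposition 3.4]{scholzeqdef} supplies Scholze's local quasi-isomorphism $C_{q,\boxempty}^{-1}$ out of $(\Omega^*_{A/R})^{\wedge_p}\llbracket q-1\rrbracket/[p]_q$, the normalised Frobenius pull-back sending $a\,dx_{i_1}\wedge\cdots\wedge dx_{i_j}$ to $\Psi(a)\,x_{i_1}^{p-1}\cdots x_{i_j}^{p-1}\,dx_{i_1}\wedge\cdots\wedge dx_{i_j}$. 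Composing, each framed patch carries a quasi-isomorphism $c_{\boxempty}$ into the target.

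The heart of the matter is that $c_{\boxempty}$ is in fact independent of $\boxempty$, so that the $c_{\boxempty}$ patch. The point is to recognise Scholze's formula as the intrinsic \emph{divided Frobenius} on forms: if $\varphi_*\colon\Omega^j_{A/R}\to\Omega^j_{A/R}$ denotes the $\Psi$-semilinear map $a_0\,da_1\wedge\cdots\wedge da_j\mapsto\Psi(a_0)\,d\Psi(a_1)\wedge\cdots\wedge d\Psi(a_j)$, then $\varphi_*$ is canonical, is divisible by $p^j$ because $\Psi\equiv\Fr\bmod p$ forces $d\Psi(a)\in p\,\Omega^1_{A/R}$ for every $a$, and $p^{-j}\varphi_*$ is well-defined as $\Omega^j_{A/R}$ is $p$-torsion-free; and since $d\Psi(x_i)=p\,x_i^{p-1}dx_i$ one gets $p^{-j}\varphi_*(a\,dx_{i_1}\wedge\cdots\wedge dx_{i_j})=\Psi(a)\,x_{i_1}^{p-1}\cdots x_{i_j}^{p-1}\,dx_{i_1}\wedge\cdots\wedge dx_{i_j}$, which is exactly $C_{q,\boxempty}^{-1}$. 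Thus the only framing-dependent ingredient in $c_{\boxempty}$ is the comparison of Theorem \ref{mainthm3}, which is natural in the framing; and since $C_q^{-1}=p^{-\bullet}\varphi_*$ is multiplicative and the source is generated as an $E_{\infty}$-algebra over $\sO_{\fX}\llbracket q-1\rrbracket/[p]_q$ by its (locally free) degree-$1$ part, while $\oL\eta_{(q-1)}$ is lax symmetric monoidal, the composite $c_{\boxempty}$ is determined by its effect in cohomological degrees $0$ and $1$ — the canonical map $\sO_{\fX}\to\mathcal{H}^0$ induced by $\Psi$, and the deformation of the classical inverse Cartier operator on $\Omega^1$ — both of which are intrinsic. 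Hence the $c_{\boxempty}$ glue to the asserted $C_q^{-1}\in D(\fX_{\et})$, which is a quasi-isomorphism as this may be tested \'etale-locally.

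The main obstacle is precisely this framing-independence in its homotopy-coherent form. Concretely one must verify: (i) that the quasi-isomorphism of Theorem \ref{mainthm3} is compatible with the Frobenius $\Psi$ and natural in the \'etale patch; (ii) that $C_{q,\boxempty}^{-1}$ reduces modulo $[p]_q$ to the intrinsic $p^{-\bullet}\varphi_*$, which rests on the identity $\Psi(q-1)=q^p-1=[p]_q(q-1)$ (so that $\nabla_q\circ\Psi$ is divisible by $[p]_q$) together with $\partial_i\Psi(f)=p\,x_i^{p-1}\Psi(\partial_i f)$; and (iii) that the agreement of the local maps on double overlaps upgrades to coherent data over the full hypercover — for which the cleanest route is to realise the comparison before passing to cohomology, as a morphism of cosimplicial $\Lambda_p$-rings on which $\Psi$ acts compatibly, making the descent automatic.
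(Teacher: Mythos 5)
Your local analysis is right as far as it goes: the identity $d\Psi(a)\in p\,\Omega^1_{A/R}$ does make $p^{-j}\varphi_*$ a well-defined, coordinate-free formula for Scholze's $(\tilde C^{-1})^j$ on the Hodge complex, and the divisibility $\Psi(q-1)=q^p-1=[p]_q(q-1)$ is indeed the mechanism that makes everything land in the d\'ecalage modulo $[p]_q$. But the gluing argument has a genuine gap. The map you must patch is not $p^{-\bullet}\varphi_*$ alone but its composite with the zigzag of Theorem \ref{mainthm3} identifying $\widehat{q\mbox{-}\Omega}^{\bt}_{A/R,\boxempty}$ with $\oL\eta_{(q-1)}\widehat{\qDR}_p(A/R)$, and that zigzag is built from the framing. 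Your reduction ``the composite is determined by its effect in degrees $0$ and $1$'' is not justified: a map in the derived category out of $\bigoplus_j\Omega^j[-j]$ is a collection of classes in cohomology of Hom-complexes, and multiplicativity only pins these down once you already have a coherently multiplicative map, which is exactly what is to be produced. Even granting agreement up to homotopy on double overlaps, descent in $D(\fX_{\et})$ requires coherent higher homotopies over the whole hypercover; your point (iii) names this issue but does not resolve it.

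The paper avoids gluing altogether by constructing $C_q^{-1}$ globally from the Adams operation. Since $\qDR_p$ is functorial in the $\Lambda_p$-ring $\sO_{\fX}$ and the Frobenius lift $\Psi$ is global data, $\Psi^p$ acts on the global complex $\qDR_p(\sO_{\fX}/R)^{\wedge_p}$, and because $\Psi^p(q-1)=q^p-1$ it induces
\[
(q-1)^i\Psi^p\co\tau^{\le i}\bigl(\qDR_p(\sO_{\fX}/R)^{\wedge_p}/(q-1)\bigr)\to(\oL\eta_{(q-1)}\hat{\qDR}_p(\sO_{\fX}/R)^{\wedge_p})/[p]_q.
\]
The source is identified with $\bigoplus_{j\le i}(\Omega^j_{\sO_{\fX}/R})^{\wedge_p}[-j]$ by the part of Proposition \ref{mainthm2} that is explicitly independent of the framing (the reduction modulo $(q-1)$), and $C_q^{-1}$ is read off from the top summands. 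A local framing is chosen only afterwards, to identify $(q-1)^i\Psi^p$ with $\sum_{j\le i}(q-1)^{i-j}(\tilde C^{-1})^j$ and invoke Proposition 3.4 of \cite{scholzeqdef} to see that the map is a quasi-isomorphism --- which, as you note, may be tested \'etale-locally. In other words, the framing-independence is needed only for the much weaker statement about $\hat{\qDR}_p/(q-1)$, not for the full map modulo $[p]_q$; reorganising your argument along these lines would close the gap.
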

\begin{proof}
The unique lifting property of formally \'etale morphisms ensures that each affine formal scheme $\fU$ \'etale over $\fX$ has a unique lift $\Psi^p|_{\fU}$ of Frobenius compatible with the given operation $\Psi^p$ on $\fX$. Functoriality of the construction $\qDR_p$ for rings with Frobenius lifts thus gives us an \'etale presheaf $\widehat{\qDR}_p(\sO_{\fX}/R)^{\wedge_p}$ of complexes on $\fX$.  As in Definition \ref{qDRdef},  the Adams operation $\Psi^p$ on  $\sO_{\fX}$  then extends to $(R\llbracket q-1 \rrbracket, \Psi^p)$-semilinear maps
\begin{align*}
\Psi^p \co  \qDR_p(\sO_{\fX}/R)^{\wedge_p}&\to \qDR_p(\sO_{\fX}/R)^{\wedge_p}\\
   \qDR_p(\sO_{\fX}/R)^{\wedge_p}/(q-1)&\to \qDR_p(\sO_{\fX}/R)^{\wedge_p}/(q^p-1),
\end{align*}
% a map
% \[
% \Psi^p \co  \qDR_p(A/R)^{\wedge_p}/(q-1)\to \qDR_p(A/R)^{\wedge_p}/(q^p-1).
% \]
and thus,  denoting good truncation by $\tau$,
\[
  (q-1)^i\Psi^p \co \tau^{\le i}(\qDR_p(\sO_{\fX}/R)^{\wedge_p}/(q-1))\to (\oL\eta_{(q-1)}\widehat{\qDR}_p(\sO_{\fX}/R)^{\wedge_p})/[p]_q;% \simeq (\widehat{q\mbox{-}\Omega}^{\bt}_{A/R, \boxempty})^{\wedge_p}/[p]_q,
\]
the left-hand side is quasi-isomorphic to  $\bigoplus_{j \le i} (\Omega^j_{\sO_{\fX}/R})^{\wedge_p}[-j]$ by Theorem \ref{mainthm2}. % giving a global quasi-isomorphism $\widehat{\qDR}_p(\sO_{\fX}/R)^{\wedge_p}/(q-1) \simeq  (\Omega^*_{\fX/R})^{\wedge_p}$. 

Extending the construction  $R[q]$-linearly and restricting to top summands therefore gives us the global map $C_q^{-1}$.  For a local choice of framing, Lemma \ref{PsiqDRlemma}  gives equivalences
\[
(q-1)^i\Psi^p \simeq \sum_{j \le i}  (q-1)^{i-j} (\tilde{C}^{-1})^j
\]
for Scholze's locally defined lifts $(\tilde{C}^{-1})^j\co (\Omega^j_{A/R})^{\wedge_p}[-j] \to (\widehat{q\mbox{-}\Omega}^{\bt}_{A/R, \boxempty})^{\wedge_p}/[p]_q$ of the Cartier quasi-isomorphism. The local calculation of \cite[Proposition 3.4]{scholzeqdef} then ensures that $C_q^{-1}$  is a quasi-isomorphism.
\end{proof}

\section{Functoriality via analogues of de Rham--Witt cohomology}\label{DRWsn}

In order to obtain a cohomology theory for smooth commutative rings rather than for $\Lambda_P$-rings, we now consider $q$-analogues of de Rham--Witt cohomology. Our starting point is to observe that if we allow roots of $q$, we can extend the Jackson differential to fractional powers of $x$ by the formula 
\[
 \nabla_q (x^{m/n}) = \frac{q^{m/n} -1}{q-1} x^{m/n}d\log x,
\]
where $d \log x= x^{-1}dx$,
so terms such as $[n]_{q^{1/n}}x^{m/n}$ have integral derivative, where $[n]_{q^{1/n}}= \frac{q-1}{q^{1/n}-1}$.

\subsection{Motivation}\label{motivnsn}

\begin{definition}
 Given a $\Lambda_P$-ring $B$, define  $\Psi^{1/P^{\infty}}B$ to be the smallest  $\Lambda_P$-ring which is equipped with a morphism from  $B$  and for which the Adams operations are automorphisms.
\end{definition}
In the case $P= \{p\}$, the $\Lambda_p$-ring $\Psi^{1/p^{\infty}}B$ is thus the colimit of the diagram 
\[
B \xra{\Psi^{p}}  B \xra{\Psi^{p}} B \xra{\Psi^{p}} \ldots.
\]

By Remark \ref{cosimplicialrmkP}, $\widehat{\qDR}_p( A/R)$ naturally underlies a cosimplicial $\Lambda_p$-ring, so applying $\Psi^{1/p^{\infty}}$ levelwise gives another cosimplicial $\Lambda_p$-ring. For the Adams operation $\Psi^p$ of Definition \ref{qDRdefP}, the underlying cochain complex is just $\Psi^{1/p^{\infty}}\widehat{\qDR}_p( A/R):= \LLim_{\Psi^p} \widehat{\qDR}_p( A/R)$.
As an immediate consequence of Lemma \ref{PsiqDRlemma}, we have:

\begin{lemma}\label{qDRWlemma1}
If $R$ is a flat $\Lambda_p$-ring over $\Z_{(p)}$ with $\Psi^p$ an isomorphism, then $\Psi^{1/p^{\infty}}\widehat{qDR}_p( R[x]/R)$ is quasi-isomorphic to the  complex 
\[
 ( R[x^{1/p^{\infty}}, q^{1/p^{\infty}}] \xra{(q-1)\nabla_q} (x^{1/p^{\infty}})R[x^{1/p^{\infty}}, q^{1/p^{\infty}}] d\!\log x)^{\wedge_{(q-1)}},
\]
so
 the d\'ecalage
%\[
 $\oL\eta_{(q-1)}\Psi^{1/p^{\infty}}\widehat{\qDR}_p( R[x]/R)$
%\]
and the complex 
% of
% \[
% R[x^{1/P^{\infty}}, q^{1/P^{\infty}}, \{(q^r-1)^{-1}\}_r] \xra{\nabla_q}  R[x^{1/P^{\infty}}, q^{1/P^{\infty}}, \{(q^r-1)^{-1}\}_r]d\log x
% \]
% consisting of 
\begin{align*}
 & \{ a \in R[x^{1/p^{\infty}}, q^{1/p^{\infty}}] ~:~ \nabla_qa \in R[x^{1/p^{\infty}}, q^{1/p^{\infty}}]d\log x\} \\
 &\xra{\nabla_q}%\{ b\, d\!\log x \in R[x^{1/p^{\infty}}, q^{1/p^{\infty}}]d\!\log x  ~:~ b(0,q)=0\}.
(x^{1/p^{\infty}})R[x^{1/p^{\infty}}, q^{1/p^{\infty}}] d\!\log x.
\end{align*}
are quasi-isomorphic after $(q-1)$-adic completion. 
% \[
%  \{ a \in R[x^{1/p^{\infty}}, q^{1/p^{\infty}}] ~:~ \nabla_qa \in R[x^{1/p^{\infty}}, q^{1/p^{\infty}}]d\log x\} \xra{\nabla_q}(x^{1/p^{\infty}})R[x^{1/p^{\infty}}, q^{1/p^{\infty}}] d\!\log x.
% \]
\end{lemma} 
Thus in level $0$ (resp.\ level $1$), $\oL\eta_{(q-1)}\Psi^{1/p^{\infty}}\widehat{\qDR}( R[x]/R)$ is spanned by elements of the form $[p^n]_{q^{1/p^n}}x^{m/p^n}$ (resp.\ $x^{m/p^n} d\log x$),  so setting $q^{1/p^{\infty}}=1$ gives a complex whose $p$-adic completion is the $p$-typical de Rham--Witt complex. 

\begin{lemma}\label{perfectioncoholemma}
Let  $R$ and $A$ be flat $p$-adically complete $\Lambda_p$-algebras over $\Z_p$, with $\Psi^p$ an isomorphism on $R$. For  elements $x_i$ of rank $1$, take a map $\boxempty\co R[x_1, \ldots,x_d]^{\wedge_p}\to A$ of $\Lambda_p$-rings which is a flat $p$-adic deformation of an \'etale map.
Then the map
\[
(R[q^{1/p^{\infty}}]\ten_{R[q]} \oL\eta_{(q-1)} \widehat{\qDR}_p( A/R))^{\wedge_p} \to \oL\eta_{(q-1)} (\Psi^{1/p^{\infty}}\widehat{\qDR}_p (A/R))^{\wedge_p}
\]
is a quasi-isomorphism.
\end{lemma}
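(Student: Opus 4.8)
The plan is to reduce the statement first to the case of a polynomial ring, then to a single variable, and finally to an explicit computation with the two-term complexes appearing in Lemma \ref{qDRWlemma1}.

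\emph{Reduction to $A=R[x_1,\dots,x_d]^{\wedge_p}$.} The mechanism is the same ``key observation'' as in the proof of Theorem \ref{mainthm3}: a formally \'etale map lifts uniquely along nilpotent extensions and along $\Psi^p$, so the cosimplicial $\Lambda_p$-rings computing both $\widehat{\qDR}_p(A/R)$ and $\Psi^{1/p^\infty}\widehat{\qDR}_p(A/R)$ are obtained from the ones for $R[x_1,\dots,x_d]$ by base change along $\boxempty$, a flat $p$-adic deformation of an \'etale map which only affects the ``$x$-variables''. Since $\oL\eta_{(q-1)}$ commutes with flat base change and with the functor $R[q^{1/p^\infty}]\ten_{R[q]}(-)$ (which is flat and orthogonal to the $x$-direction), the comparison map for $A$ is the base change along $\boxempty$ of the one for $R[x_1,\dots,x_d]$, so it suffices to treat $A=R[x_1,\dots,x_d]^{\wedge_p}$. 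Next I would reduce to $d=1$: by Theorem \ref{mainthm} and the definition of the $q$-de Rham complex as an iterated tensor product over $R[q]$ (together with the compatibility of $\oL\eta_{(q-1)}$ with these tensor products, used implicitly in Theorem \ref{mainthm}), and since $\Psi^{1/p^\infty}$ and $R[q^{1/p^\infty}]\ten_{R[q]}(-)$ likewise distribute over the tensor product, both sides are tensor products over $R\llb q-1\rrb$ (resp.\ its variant with $p$-power roots of $q$) of the $d=1$ complexes, and a tensor product of $p$-completed quasi-isomorphisms of flat complexes is one.

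\emph{The case $A=R[x]^{\wedge_p}$.} By Theorem \ref{mainthm3} the left-hand side is the $p$-completion of the $(q-1)$-adically completed complex $R[x][q^{1/p^\infty}]\xra{\nabla_q}R[x][q^{1/p^\infty}]\,d\log x$ with $\nabla_q(x^m)=[m]_q x^m\,d\log x$; by Lemma \ref{qDRWlemma1} the right-hand side is the $p$-completion of the $(q-1)$-completion of the subcomplex of forms in $x^{1/p^\infty},q^{1/p^\infty}$ on which $\nabla_q$ lands in $R[x^{1/p^\infty},q^{1/p^\infty}]\,d\log x$, and the comparison map is the evident inclusion. As recorded after Lemma \ref{qDRWlemma1}, over $R[q^{1/p^\infty}]$ the target is freely spanned in degree $0$ by the $x^m$ ($m\ge 0$) together with the elements $[p^n]_{q^{1/p^n}}x^{m/p^n}$ ($n\ge 1$, $p\nmid m$), and in degree $1$ by the $x^{m/p^n}\,d\log x$ ($m\ge 1$), where one uses that $[m]_{q^{1/p^n}}$ and $[p^n]_{q^{1/p^n}}$ generate the unit ideal when $p\nmid m$ (their resultant is a unit). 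Since $\nabla_q\bigl([p^n]_{q^{1/p^n}}x^{m/p^n}\bigr)=[m]_{q^{1/p^n}}\,x^{m/p^n}\,d\log x$, the cone of the inclusion is the $(q-1)$- and $p$-completed direct sum, over pairs $(m,n)$ with $m\ge 1$, $p\nmid m$, $n\ge 1$, of the two-term complexes
\[
R[q^{1/p^\infty}]\xra{\ \cdot\,[m]_{q^{1/p^n}}\ }R[q^{1/p^\infty}].
\]
It therefore remains to show that each $[m]_{q^{1/p^n}}$ with $p\nmid m$ becomes invertible after $(q-1)$-adic and then $p$-adic completion. This holds because $[m]_{q^{1/p^n}}\equiv m\bmod(q^{1/p^n}-1)$ while, using $[p^n]_{q^{1/p^n}}\equiv(q^{1/p^n}-1)^{p^n-1}\bmod p$, we get $q-1=(q^{1/p^n}-1)[p^n]_{q^{1/p^n}}\equiv(q^{1/p^n}-1)^{p^n}\bmod p$; hence modulo $p$ the $(q-1)$-adic and $(q^{1/p^n}-1)$-adic topologies on $R[q^{1/p^\infty}]$ coincide, so $q^{1/p^n}-1$ is topologically nilpotent in the $(q-1)$-complete ring modulo $p$, and since $m$ is a unit modulo $p$ the element $[m]_{q^{1/p^n}}$ is a unit there. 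Thus each two-term complex, hence the cone, is $p$-completely acyclic.

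I expect the third step to be the main obstacle, and within it the care needed to make the completions interact correctly: one must check that after applying $\oL\eta_{(q-1)}$ (so both sides are $(q-1)$-complete) and then reducing mod $p$, the infinite direct-sum description of the cone survives and each summand really has invertible differential --- the comparison of the $(q-1)$-adic and $(q^{1/p^n}-1)$-adic topologies modulo $p$ is exactly what drives this, and lining up the colimit defining $\Psi^{1/p^\infty}$, the $(q-1)$-completion, and the (derived) $p$-completion so that nothing is lost is the delicate point. Steps 1 and 2 are routine given the machinery already developed for Theorems \ref{mainthm} and \ref{mainthm3}, granting the standard facts that $\oL\eta_{(q-1)}$ commutes with flat base change, filtered colimits, and the relevant tensor products.
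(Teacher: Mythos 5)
Your proposal is correct and follows essentially the same route as the paper: identify $\Psi^{1/p^\infty}A$ with $A[x_1^{1/p^\infty},\dots,x_d^{1/p^\infty}]^{\wedge_p}$ via the flat \'etale framing, describe the d\'ecalage explicitly as in Lemma \ref{qDRWlemma1}, and kill the fractional-exponent summands using that $[m]_{q^{1/p^n}}$ is a unit in $\Z[q^{1/p^\infty}]^{\wedge_{(p,q-1)}}$ for $p\nmid m$. The only cosmetic difference is that the paper treats all $d$ variables at once, exhibiting the acyclicity of the $\alpha\neq 0$ terms via an explicit $q$-integration contracting homotopy rather than reducing to $d=1$ and splitting the cone into two-term complexes.
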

\begin{proof}
%Now, $\Psi^{1/p^{\infty}}R[x_1, \ldots,x_d]\cong R[x_1^{1/p^{\infty}}, \ldots,x_d^{1/p^{\infty}}]$, and 
The map $\Psi^p \co A\ten_{R[x_1, \ldots,x_d]}R[x_1^{1/p}, \ldots,x_d^{1/p}] \to A$ becomes an isomorphism on $p$-adic completion, because  $\boxempty$ is flat and we have an isomorphism  modulo  $p$.
  Thus 
\[
\Psi^{1/p^{\infty}}A \cong A[x_1^{1/p^{\infty}}, \ldots,x_d^{1/p^{\infty}}]^{\wedge_p}:= (A\ten_{R[x_1, \ldots,x_d]}R[x_1^{1/p^{\infty}}, \ldots,x_d^{1/p^{\infty}}])^{\wedge_p}
\]

Combined  with the calculation of Lemma \ref{PsiqDRlemma}, this  gives us a quasi-isomorphism between $(\Psi^{1/p^{\infty}}\widehat{\qDR}_p(A/R))^{\wedge_p}$ and the $(p,q-1)$-adic completion of 
\[
(\bigoplus_I \bigoplus_{\alpha} A\llb q-1\rrb x_1^{\alpha_1}\ldots x_d^{\alpha_d} dx^I[-|I|], (q-1)\nabla_q),
\]
where $I$ ranges over finite subsets of $\{1, \ldots, d\}$ and  $\alpha$ ranges over elements of $p^{-\infty}\Z^d$ with $0 \le \alpha_i <1$ if $i \notin I$ and $-1< \alpha_i \le 0$ if $i \in I$.

We then observe that the contributions to the d\'ecalage $\eta_{(q-1)}$ from terms with $\alpha \ne 0$ must be acyclic, via a
contracting homotopy defined by the restriction to $\eta_{(q-1)}$ of the $q$-integration map
\[
 f x_1^{\alpha_1}\ldots x_d^{\alpha_d} dx^I \mapsto f x_1^{\alpha_1}\ldots  x_d^{\alpha_d}\sum_{i \in I}  \pm x_i [\alpha_i]_q^{-1} dx^{(I\setminus i)},
\]
where $[\frac{m}{p^n}]_q^{-1}= [m]_{q^{1/p^n}}^{-1}[p^n]_{q^{1/p^n}}$ for $m$ coprime to $p$, noting that $[m]_{q^{1/p^n}}$ is a unit in $\Z[q^{1/p^{\infty}}]^{\wedge_{(p,q-1)}}$.
\end{proof}

\begin{remark}\label{Wittrmk}
 The endomorphism given on $\Psi^{1/P^{\infty}}\widehat{\qDR}_P(A/R)$ by 
\[
 a \mapsto \Psi^{1/n}([n]_qa)= [n]_{q^{1/n}}\Psi^{1/n}a
\]
 descends to an endomorphism  of  $\H^0(\Psi^{1/P^{\infty}}\widehat{\qDR}_P(A/R)/(q-1))$, which we may denote by $V_n$ because it mimics Verschiebung in the sense that $\Psi^{n}V_n= n\cdot\id$ (since $[n]_q \equiv n \mod (q-1)$). For $A$ smooth over $\Z$, we then have
\begin{align*}
\H^0(\Psi^{1/P^{\infty}}\widehat{\qDR}_P(A/\Z)/(q-1))/(V_p~:~p \in P) &\cong A[q^{1/P^{\infty}}]/([p]_{q^{1/p}}~:~p \in P)\\ &\cong A[\zeta_{P^{\infty}}],
\end{align*}
for $\zeta_n$ a primitive $n$th root of unity.

By adjunction, this gives  an injective map 
\[
 \H^0(\Psi^{1/P^{\infty}}\widehat{\qDR}_P(A/\Z)/(q-1)) \into W^{(P)}A[\zeta_{P^{\infty}}]
\]
of $\Lambda_P$-rings, which becomes an isomorphism on completing $\Psi^{1/P^{\infty}}\widehat{\qDR}(A/\Z)$ with respect to the system $\{([n]_{q^{1/n}})\}_{n\in P^{\infty}}$ of ideals, where we write $P^{\infty}$ for the set of integers whose prime factors are all in $P$. This implies that the cokernel is annihilated by all elements of $(q^{1/P^{\infty}}-1)$, so leads us to consider almost mathematics as in \cite{gabberalmost}. 
\end{remark}

%%note that although $\Psi^{1/P^{\infty}}A=  \Psi^{1/P^{\infty}}W^{(p)}A$, we won't have  $\Psi^{1/P^{\infty}}\widehat{\qDR}_P(A/R)\approx \Psi^{1/P^{\infty}}\widehat{\qDR}_P(W^{(p)}A/R)$, since latter comes from functor which regards the values $p^nf(x^{1/p^n})$ as essential data, so remembers all the roots.

\subsection{Almost isomorphisms}
From now on, we consider only the case  $P=\{p\}$.
Combined with Lemma \ref{perfectioncoholemma}, Remark \ref{Wittrmk} allows us to regard $\oL\eta_{(q-1)}\Psi^{1/p^{\infty}}\widehat{\qDR}_p(A/\Z_p)^{\wedge_p}$ as being almost a $q^{1/p^{\infty}}$-analogue of $p$-typical de Rham--Witt cohomology.

The ideal $(q^{1/p^{\infty}}-1)^{\wedge_{(p,q-1)}}= \ker(\Z[q^{1/p^{\infty}}]^{\wedge_{(p,q-1)}} \to \Z_p)$ is equal to the $p$-adic completion of its square, since we may write it as the kernel $W^{(p)}(\m)$ of $W^{(p)}(\bF_p[q^{1/p^{\infty}}]^{\wedge_{(q-1)}})\to W^{(p)}(\bF_p)$, for the idempotent maximal ideal $\m = ( (q-1)^{1/p^{\infty}})^{\wedge_{(q-1)}}$ in $\bF_p[q^{1/p^{\infty}}]^{\wedge_{(q-1)}}$. If we set $h^{1/p^n}$ to be the Teichm\"uller element 
\[
 [q^{1/p^n}-1] = \lim_{r \to \infty} (q^{1/p^{nr}}-1)^{p^r} \in \Z[q^{1/p^{\infty}}]^{\wedge_{(p,q-1)}},
\]
%$ W^{(p)}(\bF_p[q^{1/p^{\infty}}] \cong $ 
 then $W^{(p)}(\m)=(h^{1/p^{\infty}})^{\wedge_{(p,h)}}$. Although $W^{(p)}(\m)/p^n$   is not maximal in $\Z[h^{1/p^{\infty}}]^{\wedge_{(h)}}/p^n$,  it is idempotent and flat, so gives a  basic setup in the sense of \cite[2.1.1]{gabberalmost}. We thus regard the pair $(\Z[q^{1/p^{\infty}}]^{\wedge_{(p,q-1)}},W{(p)}(\m) )$ as an inverse system of basic setups for almost ring theory.

We then follow the terminology and notation of \cite{gabberalmost}, studying $p$-adically complete  $(\Z[q^{1/p^{\infty}}]^{\wedge_{(p,q-1)}})^a$-modules (almost $\Z[q^{1/p^{\infty}}]^{\wedge_{(p,q-1)}}$-modules) given by localising at almost isomorphisms, the maps whose  kernel and cokernel are $W^{(p)}(\m)$-torsion. 

\begin{definition}\label{almosteltsdef}
 The obvious functor $(-)^a$ from modules to almost modules has a right adjoint $(-)_*$, given by  $N_*:=\Hom_{\Z[q^{1/p^{\infty}}]^{\wedge_{(p,q-1)}}}(W^{(p)}(\m) ,N)$, the module of almost elements. 
\end{definition}
Since the counit $(M_*)^a\to M$ of the adjunction is an (almost) isomorphism, we may also regard almost modules as a full subcategory of the category of modules, consisting of those $M$ for which the natural map $M \to (M^a)_*$ is an isomorphism. We can define $p$-adically complete  $(\Z[q^{1/p^{\infty}}]^{\wedge_{(p,q-1)}})^a$-algebras similarly,  forming a full subcategory of $\Z[q^{1/p^{\infty}}]^{\wedge_{(p,q-1)}}$-algebras. 

%%{recoveralmost} was here. Now removed.

\subsection{Perfectoid algebras}

We now relate Scholze's perfectoid algebras to a class of $\Lambda_p$-rings, by factorising  the tilting equivalence. For simplicity, we work over $\Z[\zeta_{p^{\infty}}]^{\wedge_p}$, although Lemma \ref{cfperfectoid} has natural analogues over the ring $K^o \subset K$ of power-bounded elements of any perfectoid field $K$ in the sense of \cite{scholzePerfectoidSpaces}.

\begin{definition}
Define Fontaine's period ring functor $\sA_{\inf}$ from commutative rings to $\Lambda_p$-rings by  $\sA_{\inf}(C):= \Lim_{\Psi^p}W^{(p)} (C)$. 
\end{definition}

\begin{definition}\label{perfectoidLambdapdef}
 Define a perfectoid $\Lambda_p$-ring to be a flat $p$-adically complete $\Lambda_p$-algebra over $\Z_p$, on which the Adams operation $\Psi^p$ is an isomorphism. 

By analogy with \cite[Notation 1.4]{bhattDirectSummand}, we say that a  perfectoid $\Lambda_p$-ring over  $\Z[q^{1/p^{\infty}}]^{\wedge_{(p,q-1)}}$ is integral if the morphism 
$
 B \to B_*
$
of Definition \ref{almosteltsdef} is an isomorphism. 
\end{definition}

\begin{lemma}\label{cfperfectoid}
We have equivalences of categories
\[
 \xymatrix{
  \text{perfectoid almost }\Z[\zeta_{p^{\infty}}]^{\wedge_p} \text{-algebras} \ar@<1ex>[d]^{\sA_{\inf}(-)_*} \\
\text{integral perfectoid }\Lambda_p\text{-rings over } \Z[q^{1/p^{\infty}}]^{\wedge_{(p,q-1)}}\ar@<1ex>[u]^{-/[p]_{q^{1/p}} } \ar@<-1ex>[d]_{-/p}\\ 
\text{perfectoid almost }\bF_p[q^{1/p^{\infty}}]^{\wedge_{(q-1)}}\text{-algebras}.\ar@<-1ex>[u]_{W^{(p)}(-)_*}
 }
\]
\end{lemma}
\begin{proof}
A perfectoid $\Lambda_p$-ring $B$ is a deformation of the perfect $\bF_p$-algebra $B/p$. As in \cite[Proposition 5.13]{scholzePerfectoidSpaces},  a perfect $\bF_p$-algebra $C$ has  a unique deformation  $W^{(p)}(C)$ over $\Z_p$, to which Frobenius must lift uniquely; this shows that $W^{(p)}$ gives an equivalence between perfect $\bF_p$-algebras and perfectoid $\Lambda_p$-rings. 
To obtain the bottom equivalence of the diagram, we will show that the functor $W^{(p)}$ commutes with the respective functors $C \mapsto C_*$ of almost elements, then appeal to the tilting equivalence.

Because the idempotent ideals of the basic setups in each of our three categories are generated by the rank $1$ elements $h^{p^{-n}}$ constructed before Definition \ref{almosteltsdef}, we can write $C_*= \bigcap_n h^{-p^{-n}}C$ in each setting. For a Teichm\"uller element $[c] \in W^{(p)}(C)$, the standard isomorphism $W^{(p)}(C)\cong C^{\N_0}$ of sets gives an isomorphism $[c]W^{(p)}(C)\cong \prod_{m \ge 0} c^{p^m}C$. Thus the natural map $ W^{(p)}(C)_* \to  W^{(p)}(C_*)$ of $\Lambda_p$-rings is an isomorphism, since
\[
 W^{(p)}(C)_* \cong \bigcap_{n\ge 0} \prod_{m\ge 0}  h^{-p^{m-n}}C \cong \prod_{m \ge 0} C_* \cong W^{(p)}(C_*),
\]
and taking inverse limits with respect to $\Psi^p$ gives $\sA_{\inf}(C)_*\cong \sA_{\inf}(C_*)$ as well.

Next, we observe that since $B:=\sA_{\inf} (C)$ is a perfectoid  $\Lambda_p$-ring for any flat $p$-adically complete $\Z_p$-algebra $C$, we must have $B \cong W^{(p)}(B/p)$. Comparing rank $1$ elements then gives a monoid isomorphism $(B/p)\cong \Lim_{x \mapsto x^p} C$, from which it follows that 
\[
 \bF_p\ten_{\bZ_p}\sA_{\inf}(C)\cong \Lim_{\Phi} (C/p)=C^{\flat}
\]
 whenever $C$ is perfectoid. Since tilting gives an equivalence of almost algebras by \cite[Theorem 5.2]{scholzePerfectoidSpaces}, this completes the proof.
\end{proof}

\subsection{Functoriality of $q$-de Rham cohomology}

 Since $(\Psi^{1/p^{\infty}}\widehat{\qDR}_p(A/\Z_p))^{\wedge_p}$ is represented by a cosimplicial  perfectoid $\Lambda_p$-ring over $\Z[q^{1/p^{\infty}}]^{\wedge_{(p,q-1)}}$ for any flat $\Lambda_p$-ring $A$ over $\Z_p$, it corresponds under Lemma \ref{cfperfectoid} to a cosimplicial perfectoid $(\Z[\zeta_{p^{\infty}}]^{\wedge_p})^a$-algebra, representing the following functor:

\begin{lemma}\label{perfectoidreplemma}
 For a perfectoid $(\Z[\zeta_{p^{\infty}}]^{\wedge_p})^a$-algebra $C$,   and a $\Lambda_p$-ring $A$ over $\Z_p$ with $X=\Spec A$, there is a canonical isomorphism
\[
 X_{\strat}^{q,p}(\sA_{\inf} (C)_*) \cong \im( \Lim_{\Psi^p} X(C_*) \to X(C_*)), 
\]
for the ring $C_*$ of almost elements.
\end{lemma}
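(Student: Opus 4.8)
The plan is to compute both sides by passing to the perfection $\Psi^{1/p^\infty}A$ and exploiting the comonadic description $\sA_{\inf}(-)=\Lim_{\Psi^p}W^{(p)}(-)$, so that the only genuine issue becomes the control of reduction modulo $(q-1)$.

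Write $B:=\Lim_{\Psi^p}W^{(p)}(C)_*$. Since $(-)_*$ commutes with limits and, being the relevant right adjoint, with $W^{(p)}$, we have $B=\sA_{\inf}(C_*)$, which by (the proof of) Lemma~\ref{cfperfectoid} is a perfectoid $\Lambda_p$-ring over $\Z[q^{1/p^\infty}]^{\wedge_{(p,q-1)}}$; in particular $\Psi^p$ is an automorphism of $B$. Hence every $\Lambda_p$-morphism $A\to B$ over $\Z_p$ factors uniquely through $A\to\Psi^{1/p^\infty}A$ (the colimit of $A\xra{\Psi^p}A\xra{\Psi^p}\cdots$), so $\Hom_{\Lambda_p,\Z_p}(A,B)=\Hom_{\Lambda_p,\Z_p}(\Psi^{1/p^\infty}A,B)$. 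Now for a $\Lambda_p$-ring $D$ on which $\Psi^p$ is invertible the adjunction $\Hom_{\Lambda_p}(D,W^{(p)}(C_*))=\Hom_{\Ring}(D,C_*)$ carries post-composition with the transition map $\Psi^p$ of $W^{(p)}(C_*)$ to pre-composition with the invertible $\Psi^p$ of $D$, so the tower defining $B$ collapses on $\Hom$-sets and $\Hom_{\Lambda_p,\Z_p}(A,B)\cong\Hom_{\Ring,\Z_p}(\Psi^{1/p^\infty}A,C_*)\cong\Lim_{\Psi^p}X(C_*)$, the latter limit being along pre-composition with $\Psi^p$ on $A$; moreover the projection $\Lim_{\Psi^p}X(C_*)\to X(C_*)$ to the zeroth term corresponds to post-composition with the canonical (Fontaine) map $\theta\co B\to W^{(p)}(C_*)\to C_*$, whose kernel is $[p]_{q^{1/p}}B$.

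It remains to match the image of $\Hom_{\Lambda_p,\Z_p}(A,B)\to\Hom_{\Lambda_p,\Z_p}(A,B/(q-1))$ — which is $X_{\strat}^q(B)$ by definition, hence a quotient of $\Lim_{\Psi^p}X(C_*)$ by the above — with $\im(\Lim_{\Psi^p}X(C_*)\to X(C_*))$. Since $q-1=[p]_{q^{1/p}}(q^{1/p}-1)$ lies in $\ker\theta$, the map $\theta$ factors through $B/(q-1)$; composing the resulting $X_{\strat}^q(B)\to X(C_*)$ with the surjection $\Lim_{\Psi^p}X(C_*)\twoheadrightarrow X_{\strat}^q(B)$ recovers the zeroth projection, so $X_{\strat}^q(B)\to X(C_*)$ has image inside $\im(\Lim_{\Psi^p}X(C_*)\to X(C_*))$ and is surjective onto it. The content is the injectivity of $X_{\strat}^q(B)\to X(C_*)$, i.e.\ that reduction mod $(q-1)$ of a $\Lambda_p$-morphism $A\to B$ retains nothing beyond its composite with $\theta$. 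One writes a pair of lifts as $\Lambda_p$-maps $\hat f,\hat f'\co\Psi^{1/p^\infty}A\to B$ with associated towers $(h_n),(h_n')$ of ring maps $A\to C_*$; using $\iota_n=(\Psi^p)^{-n}\iota_0$ on $\Psi^{1/p^\infty}A$ and the cofree description of $W^{(p)}(C_*)$, the $n$-th component of $\hat f\iota_0(a)-\hat f'\iota_0(a)\in\Lim_{\Psi^p}W^{(p)}(C_*)$ has an explicit ghost vector assembled from the differences $h_j-h_j'$, while $\pi_0(q-1)=0$ since $q$ maps to the cyclotomic unit $[\epsilon]$.

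\textbf{Main obstacle.} The step I expect to be hardest is then to show that membership of this difference in $(q-1)B$ is forced by $h_0=h_0'$ alone: this requires divisibility of each $h_k-h_k'$ by the elements $\zeta_{p^k}-1$ occurring in the Witt components of $q-1$, and these divisibilities hold only up to the maximal ideal, so must be read off in the almost category. Here Lemma~\ref{recoveralmost} is used decisively — $B$ is the module of almost elements of the associated almost $\Z[q^{1/p^\infty}]^{\wedge_{(p,q-1)}}$-module — which should make the almost-divisibilities suffice. Granting this, $X_{\strat}^q(B)\to X(C_*)$ is a bijection onto $\im(\Lim_{\Psi^p}X(C_*)\to X(C_*))$, naturally in $A$ and $C$, which is the assertion.
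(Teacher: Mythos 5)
Your reduction is the same as the paper's: identify $\Hom_{\Lambda_p}(A,B)$ with $\Lim_{\Psi^p}X(C_*)$ (the paper does this in one line, since $W^{(p)}$ and $(-)_*$ are right adjoints and so commute with the limit; your detour through $\Psi^{1/p^{\infty}}A$ lands in the same place), identify the zeroth projection with $\theta\circ(-)$, and reduce everything to the statement that reduction mod $(q-1)$ remembers exactly the composite with $\theta$. But that last statement is the entire content of the lemma, and you explicitly write ``Granting this''. So the proof is not complete: the step you flag as the main obstacle is precisely the step the paper has to argue, and you have only named the tool (almost mathematics plus $B=B_*$) without running it.

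Here is how the paper closes it, and why your component-wise formulation makes it harder than necessary. Since $\theta f=\theta f'$ implies $\theta\Psi^{p^n}f=\theta\Psi^{p^n}f'$ for all $n\ge 0$ (precompose with $\Psi^{p^n}_A$ and use that $f,f'$ are $\Lambda_p$-maps), the difference $f(a)-f'(a)$ lies in $\ker(B\to W^{(p)}(C_*))=\bigcap_n [p^n]_{q^{1/p^n}}B$, the truncation-$n$ kernel being $([p^n]_{q^{1/p^n}})$ exactly as for Fontaine's $\tilde\theta_r$. One then argues directly in $B$: if $x\in [p^n]_{q^{1/p^n}}B$ for every $n$, then since $(q-1)=[p^n]_{q^{1/p^n}}(q^{1/p^n}-1)$ we get $(q^{1/p^n}-1)\,x\in(q-1)B$ for all $n$, so $x$ defines an almost element of $(q-1)B$; flatness gives $(q-1)B\cong B$ and $B=B_*$, so $x$ is a genuine element of $(q-1)B$. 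Note this gives the stronger statement $\Hom_{\Lambda_p}(A,B/(q-1))\into X(C_*)$, with no reference to liftability. By contrast, your plan to extract divisibilities of the individual differences $h_k-h_k'$ by $\zeta_{p^k}-1$ in $C_*$ and then reassemble them into membership of $(q-1)B$ only produces necessary conditions downstairs; lifting those divisibilities back along $\theta$ into $B$ is exactly the kind of step that fails integrally and is awkward even almost, whereas the intersection-of-ideals argument in $B$ avoids it entirely. As written, the proposal identifies the right obstacle and the right tool but does not prove the lemma.
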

\begin{proof}
By definition, $X_{\strat}^{q,p}(\sA_{\inf} (C)_*)$ is the image of 
\[
 \Hom_{\Lambda_p}(A, \sA_{\inf} (C)_*) \to \Hom_{\Lambda_p}(A, (\sA_{\inf} (C)_*)/(q-1)).
\]
Since right adjoints commute with limits and $\sA_{\inf}= \Lim_{\Psi^p}W^{(p)}$, we may rewrite the first term as $\Lim_{\Psi^p} \Hom_{\Lambda_p}(A, W^{(p)} (C_*))= \Lim_{\Psi^p} X(C_*)$.

Setting $B:=\Lim_{\Psi^p}W^{(p)} (C)_*$, observe that  because $[p^n]_{q^{1/p^n}}(q^{1/p^n}-1)=(q-1)$, we have  $\bigcap_n [p^n]_{q^{1/p^n}}B=(q-1)B$, any element on the left defining an almost element of $(q-1)B$, hence a genuine element since $B=B_*$ is flat.
 Then note that since the projection map $\theta \co B \to C_*$ has kernel $([p]_{q^{1/p}})$, the map $\theta \circ \Psi^{p^{n-1}}$ has kernel $([p]_{q^{1/p^n}})$, and so $B \to W^{(p)}(C)_*$ has kernel  $\bigcap_n [p^n]_{q^{1/p^n}}B$. Thus 
\[
 \Hom_{\Lambda_p}(A, (\Lim_{\Psi^p}W^{(p)} (C)_*)/(q-1))\into  \Hom_{\Lambda_p}(A,W^{(p)}(C)_*)= X(C_*).\qedhere
\]
\end{proof}
In fact, the tilting equivalence gives $\Lim_{\Psi^p} X(C_*) \cong X(C^{\flat}_*)$, so 
the only dependence of  $X_{\strat}^{q,p}(\sA_{\inf} (C)_*)$, and hence  $((\Psi^{1/p^{\infty}}\widehat{\qDR}_p(A/\Z_p))^{\wedge_p})^a$, on the Frobenius lift $\Psi^p$ is in determining the image of $ X(C^{\flat}_*) \to  X(C_*)$ as $C$ varies.

Although the map $ X(C^{\flat}_*) \to  X(C_*)$  is not surjective, it is almost so in a precise sense, which we now use to establish independence of $\Psi^p$, showing that, up to faithfully flat descent, $\widehat{\qDR}_p(A/\Z_p)^{\wedge_p}/[p]_{q^{1/p}}$ is the best possible perfectoid approximation to $A[\zeta_{p^{\infty}}]^{\wedge_p}$.

\begin{definition}
Given  a functor $X$ from  $(\Z[\zeta_{p^{\infty}}]^{\wedge_p})^a $-algebras to sets and a functor $\sA$ from perfectoid  $(\Z[\zeta_{p^{\infty}}]^{\wedge_p})^a$-algebras to abelian groups, we write
\[
 \oR\Gamma_{\Pfd}(X,\sA):=\oR\Hom_{[\Pfd((\Z_p[\zeta_{p^{\infty}}]^{\wedge_p})^a),\Set]}( X , \sA),
\]
where $\Pfd(S^a) $ denotes the category of perfectoid almost $S$-algebras, and $\oR\Hom_{[\C,\Set]}(-,-)$ is as in Definition \ref{RHomdef}.

When $X$ is representable by a $(\Z[\zeta_{p^{\infty}}]^{\wedge_p})^a $-algebra $C$, we simply denote $ \oR\Gamma_{\Pfd}(X,\sA)$ by $ \oR\Gamma_{\Pfd}(C,\sA)$ --- when  $C$ is perfectoid, this  will just be $\sA(C)$. 
\end{definition}
Thus $ \oR\Gamma_{\Pfd}(C,\sA)$ is the homotopy limit of the functor $\sA$ (regarded as taking values in cochain complexes) on the category of perfectoid  $(\Z[\zeta_{p^{\infty}}]^{\wedge_p})^a$-algebras equipped with a map from $C$. This is closely related to the pushforward from the pro-\'etale site of the generic fibre, whose d\'ecalage for $\sA=\sA_{\inf}$ is the complex $A\Omega$ of \cite[Definition 9.1]{BhattMorrowScholze}.

\begin{theorem}\label{mainthm4}
If $R$ is a  $p$-adically complete $\Lambda_p$-ring over $\Z_p$, and $A$ 
 a formal $R$-deformation  of a smooth ring over $(R/p)$, then the complex
\[
\oR\Gamma_{\Pfd}((A[\zeta_{p^{\infty}}]\ten_R \Psi^{1/p^{\infty}}R)^{\wedge_p},\sA_{\inf}) 
\]
of %%$\Z[q^{1/p^{\infty}}]^{\wedge_{(p,q-1)}}$-modules 
$(\Psi^{1/p^{\infty}}R[q])^{\wedge_{(p,q-1)}}$-modules 
is almost quasi-isomorphic to $(\Psi^{1/p^{\infty}}\widehat{\qDR}_p(A/R))^{\wedge_p} $ for any $\Lambda_p$-ring structure on $A$ %(i.e. a lift of Frobenius)
 coming from a framing over $R$ as in Theorem \ref{mainthm3}.
 \end{theorem}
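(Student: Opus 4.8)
The plan is to exhibit both complexes as the homotopy limit over $\Delta$ of the same cosimplicial perfectoid algebra. As established in the discussion preceding Lemma~\ref{perfectoidreplemma}, $(\Psi^{1/p^{\infty}}\hat{\qDR}_p(A/R))^{\wedge_p}$ is represented by a cosimplicial perfectoid $\Lambda_p$-ring over $\Z[q^{1/p^{\infty}}]^{\wedge_{(p,q-1)}}$, which by Lemma~\ref{cfperfectoid} has the form $\sA_{\inf}(C^{\bullet})$ for the cosimplicial perfectoid $(\Z_p[\zeta_{p^{\infty}}]^{\wedge_p})^a$-algebra $C^{\bullet}$ obtained by reducing modulo $[p]_{q^{1/p}}$. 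The first step is to identify $C^0$: combining the description of $\Psi^{1/p^{\infty}}$ from Lemma~\ref{qDRWlemma1} with Remark~\ref{Wittrmk}, it is the $p$-completed perfectoidisation of $S:=(A[\zeta_{p^{\infty}}]\ten_R\Psi^{1/p^{\infty}}R)^{\wedge_p}$ got by adjoining all $p$-power roots of the framing coordinates $x_i$, and hence is faithfully flat over $S$. By Lemma~\ref{perfectoidreplemma} and the tilting identification $\Lim_{\Psi^p}X(C_*)\cong X(C^{\flat}_*)$ noted just after it, $C^{\bullet}$ then represents the \v Cech nerve of the natural surjection from $C\mapsto(\Spec A)(C^{\flat}_*)$ onto the subfunctor $Y$ of $\Spec S$ given by $Y(C)=\im\bigl((\Spec A)(C^{\flat}_*)\to(\Spec A)(C_*)\bigr)$. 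So $\holim_n\sA_{\inf}(C^n)$ computes $\oR\Gamma_{\Pfd}(Y,\sA_{\inf})$, and the theorem reduces to showing that the inclusion $Y\into\Spec S$ of functors on perfectoid $(\Z_p[\zeta_{p^{\infty}}]^{\wedge_p})^a$-algebras induces an almost quasi-isomorphism on $\oR\Gamma_{\Pfd}(-,\sA_{\inf})$.

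For the descent statement that $\holim_n\sA_{\inf}(C^n)\simeq\oR\Gamma_{\Pfd}(Y,\sA_{\inf})$ I would invoke that $\sA_{\inf}$ is a sheaf for the faithfully flat topology on perfectoid almost algebras; via the tilting equivalence of Lemma~\ref{cfperfectoid} this reduces to the analogous descent statement in characteristic $p$ for $W^{(p)}$ together with almost purity, as in \cite{scholzePerfectoidSpaces,BhattMorrowScholze}, and then $C^{\bullet}$, being the \v Cech resolution of $Y$ along the representable cover $\Spec C^0$, computes the desired cohomology. The substance is thus the \emph{almost surjectivity} of $Y\to\Spec S$: given a perfectoid $C$ and a $C$-point of $\Spec S$, equivalently a ring map $A\to C_*$ over $R$, one first notes that $(\Psi^{1/p^{\infty}}R)^{\wedge_p}$ is itself perfectoid, so the induced map $(\Psi^{1/p^{\infty}}R)^{\wedge_p}\to C_*$ lifts canonically and uniquely to $\sA_{\inf}(C)=W^{(p)}(C^{\flat})$ (a map out of a perfect $\bF_p$-algebra to $C_*/p$ lifts uniquely to $C^{\flat}$); one then lifts the images $f(x_i)\in C_*$ of the framing coordinates along Fontaine's map $\theta\co\sA_{\inf}(C)\onto C_*$, and finally extends along the formally \'etale framing using the unique lifting property of formally \'etale maps for flat $\Lambda_p$-rings from the proof of Theorem~\ref{mainthm3}, applied to the $(p,[p]_{q^{1/p}})$-adically complete, hence pro-nilpotent, extension $\sA_{\inf}(C)\to C_*$. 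Reduction modulo $p$ of the resulting $\Lambda_p$-ring map $A\to\sA_{\inf}(C)$ then yields the sought point of $Y$.

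The reason the comparison is only an \emph{almost} quasi-isomorphism is that $\theta_*\co\sA_{\inf}(C)_*\onto C_*$ is only almost surjective, its cokernel being killed by $W^{(p)}(\m)$; the coordinate lift $\tilde y_i$ therefore exists only after a faithfully flat perfectoid cover $C\to C'$ (equivalently, up to multiplication by $\m$), and the resulting discrepancy between $Y$ and $\Spec S$, as well as between the genuine and almost versions, is $W^{(p)}(\m)$-torsion, controlled by Lemma~\ref{recoveralmost} and the idempotence of $W^{(p)}(\m)$. Granting this, the faithfully flat sheafifications of $Y$ and $\Spec S$ agree almost, so $\oR\Gamma_{\Pfd}(S,\sA_{\inf})\simeq\oR\Gamma_{\Pfd}(Y,\sA_{\inf})=\holim_n\sA_{\inf}(C^n)=(\Psi^{1/p^{\infty}}\hat{\qDR}_p(A/R))^{\wedge_p}$ in the almost derived category; since the tilting identification already presents $Y$ — and a fortiori its sheafification — without reference to the chosen lift of Frobenius, this also gives the asserted independence of the framing.

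I expect the main obstacle to be precisely this almost-surjectivity-and-descent step: one must at once produce the coordinate lifts along $\theta$ after an appropriate cover, check that these covers are covering families in a topology for which $\sA_{\inf}^a$ satisfies (hyper)descent, and verify that the failure of $Y\to\Spec S$ to be an isomorphism of functors is genuinely bounded by $W^{(p)}(\m)$-torsion rather than merely nonzero. It is the interlocking of Fontaine's $\theta$, the unique lifting of formally \'etale $\Lambda_p$-morphisms, and Gabber--Ramero almost mathematics that demands the most care.
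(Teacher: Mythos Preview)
Your overall architecture matches the paper's: both reduce to showing that the subfunctor you call $Y$ (the paper writes $X^{\infty}$) agrees with $\Spec S$ after sheafification for an almost-faithfully-flat topology on perfectoid algebras, using that $\sA_{\inf}$ satisfies descent for this topology. You also correctly flag the sheaf-surjectivity of $Y \to \Spec S$ as the crux. The gap is in your diagnosis of \emph{why} a cover is needed, and in the missing external input.

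You attribute the need for a cover to $\theta_* \co \sA_{\inf}(C)_* \to C_*$ being ``only almost surjective'', but $\theta$ is surjective on the nose; lifting an arbitrary element along it is trivial. The real obstruction is that for $x_i \mapsto \tilde y_i$ to define a map of $\Lambda_p$-rings $R[x_1,\ldots,x_d] \to \sA_{\inf}(C)$ --- without which your formally \'etale extension yields only a ring map, not a $\Lambda_p$-map, hence not a point of $Y$ --- the lifts $\tilde y_i$ must be of \emph{rank $1$}, i.e.\ Teichm\"uller elements $[a_i]$ with $a_i \in C^{\flat}$ and $a_i^{\sharp} = f(x_i)$. Such an $a_i$ is precisely a compatible system of $p$-power roots of $f(x_i)$ in $C$, and an arbitrary element of a perfectoid ring need not admit one. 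That it does after an almost faithfully flat perfectoid cover is Andr\'e's theorem \cite[\S 2.5]{andreFacteur}, sharpened in \cite[Theorem 2.3]{bhattDirectSummand}; this is exactly the deep input the paper invokes to conclude $(X^{\infty})^{\sharp} = X$. Without it your sheafification step is unjustified. Relatedly, the failure of $Y \to \Spec S$ to be an isomorphism of presheaves is not ``$W^{(p)}(\m)$-torsion'' in any useful sense; it is a gap between presheaf and sheaf images, closed by Andr\'e's theorem rather than by almost mathematics.
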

\begin{proof}
Since passage to almost modules is an exact functor, it  follows from the definition of $\qDR_p$  that  the cochain complex $((\Psi^{1/p^{\infty}}\widehat{\qDR}_p(A/R))^{\wedge_p} )^a $ is given by   $\oR\Hom_{[f\hat{\Lambda}_p(R\llb q-1 \rrb),\Set]}(X_{\strat}^{q,p}, ((\Psi^{1/p^{\infty}}\sO)^{\wedge_p})^a)$ in the notation of Definition \ref{RHomdef}, where $f\hat{\Lambda}_p(R\llb q-1 \rrb) $ denotes the category of flat $(p,q-1)$-adically complete $\Lambda_p$-algebras over $R\llb q-1 \rrb$. 

Now note that $C \mapsto ((\Psi^{1/p^{\infty}}C)^{\wedge_p} )_*$ is left adjoint to the inclusion functor $i \co   \Pfd\Lambda_p(R\llb q-1 \rrb) \to f\hat{\Lambda}_p(R\llb q-1 \rrb)$ from  the category of integral perfectoid $\Lambda_p$-rings over $\Psi^{1/p^{\infty}}(R\llb q-1 \rrb)^{\wedge_p}$. Thus $i^* \co \Ch([f\hat{\Lambda}_p(R\llb q-1 \rrb),\Ab]) \to  \Ch([\Pfd\Lambda_p(R\llb q-1 \rrb),\Ab])$ has exact right adjoint  $\sF \mapsto ( \sF \circ (\Psi^{1/p^{\infty}})^{\wedge_p})_*)$. %%easiest way to see is to think of left adjoint $\Psi_!$, with $\Psi_!h_B= h_{\Psi B}= i^*h_B$ by adjunction.
We therefore have 
\[
 \oR\Hom_{[f\hat{\Lambda}_p(R\llb q-1 \rrb),\Set]}(X_{\strat}^{q,p}, ((\Psi^{1/p^{\infty}}\sO)^{\wedge_p})^a)\simeq \oR\Hom_{[\Pfd\Lambda_p(R\llb q-1 \rrb),\Set]}(i^*X_{\strat}^{q,p},\sO^a).
\]

It thus follows %from the description above  
that  the cochain complex 
$((\Psi^{1/p^{\infty}}\widehat{\qDR}_p(A/R))^{\wedge_p} )^a $ 
is   the homotopy limit of the functor $(B,x,y) \mapsto B^a$ on the category of triples $(B,x,y)$ for integral perfectoid  $\Lambda_p$-rings $B$  over
 $\Z[q^{1/p^{\infty}}]^{\wedge_{(p,q-1)}}$ and 
\[
(x,y) \in X_{\strat}^{q,p}(B)\by_{Y_{\strat}^{q,p}(B)}Y(B),
\]
where $X=\Spec A$ and $Y=\Spec R$.

By Lemma \ref{cfperfectoid}, such $\Lambda_p$-rings $B$ are uniquely of the form $\sA_{\inf}(C_*)$ for $C \in \Pfd((\Z_p[\zeta_{p^{\infty}}]^{\wedge_p})^a)$, so this homotopy limit becomes
\[
((\Psi^{1/p^{\infty}}\widehat{\qDR}_p(A/R))^{\wedge_p})^a \simeq \oR\Gamma_{\Pfd}((X_{\strat}^{q,p} \by_{Y_{\strat}^{q,p}}Y) \circ (\sA_{\inf})_*, (\sA_{\inf}))^a.
\]

 Writing $X^{\infty}(C):= \im( \Lim_{\Psi^p} X(C_*) \to X(C_*))$,  Lemma \ref{perfectoidreplemma} then combines with  the description above to give
\begin{align*}
 (\widehat{\qDR}_p(A/R)^{\wedge_p})^a &\simeq \oR\Gamma_{\Pfd}(X^{\infty}\by_{Y^{\infty}}\Lim_{\Psi^p}Y, (\sA_{\inf}))^a,\\
&\simeq \oR\Gamma_{\Pfd}(X^{\infty}\by_Y\Lim_{\Psi^p}Y, (\sA_{\inf}))^a.
\end{align*}

We now introduce a Grothendieck topology on the category $[ \Pfd_{(\Z[\zeta_{p^{\infty}}]^{\wedge_p})^a},\Set]$  by taking covering morphisms to be those maps $C \to C'$ of perfectoid algebras %in $\Pfd_{(\Z[\zeta_{p^{\infty}}]^{\wedge_p})^a}$ 
which are almost faithfully flat  modulo $p$. Since $C^{\flat}=\Lim_{\Phi} (C/p)$, the functor $\sA_{\inf}$ satisfies descent with respect to these coverings, so %%this is key point at which we do need almost, because we only have almost descent.
the map 
\[
 \oR\Gamma_{\Pfd}((X^{\infty}\by_Y\Lim_{\Psi^p}Y)^{\sharp}, \sA_{\inf})^a\to
\oR\Gamma_{\Pfd}(X^{\infty}\by_Y\Lim_{\Psi^p}Y, \sA_{\inf})^a 
\]
is a quasi-isomorphism, where $(-)^{\sharp}$ denotes sheafification. 

In other words, the calculation of $(\widehat{\qDR}_p(A/R)^{\wedge_p})^a$ is not affected if  we  tweak the definition of $X^{\infty}$ by taking the image sheaf instead of the image presheaf.  %Since the functor  $X\in [ \Pfd((\Z[\zeta_{p^{\infty}}]^{\wedge_p})^a),\Set]$ preserves monomorphisms, $X=X^{\sharp}$ and 
We then have
\[
 (X^{\infty})^{\sharp}(C)=\bigcup_{C\to C'} \im( X(C_*)\by_{X(C'_*)} \Lim_{\Psi^p} X(C_*')\to X(C_*)), 
\]
where $C \to C'$ runs over all covering  morphisms.  

Now, $\Lim_{\Psi^p} X$ is represented by the perfectoid algebra $(\Psi^{1/p^{\infty}}A)^{\wedge_p}$, which  is isomorphic to $A[x_1^{1/p^{\infty}}, \ldots,x_d^{1/p^{\infty}}]^{\wedge_p}$ as in the proof of  Lemma \ref{perfectioncoholemma}. This allows us to appeal to Andr\'e's results \cite[\S 2.5]{andreFacteur} as generalised in \cite[Theorem 2.3]{bhattDirectSummand}. %%integral perfectoid harmless and equivalent, just $K^o$ instead of $K$. 
For any  morphism $f \co A \to C$,  there exists a 
covering
morphism $C \to C_i$ %which is  almost faithfully flat  modulo $p$ 
such that $f(x_i)$ has  arbitrary $p$-power roots in $C_i$. Setting $C':= C_1\ten_C \ldots \ten_C C_d$, this means that the composite $A \xra{f} C \to C'$ extends to a map $(\Psi^{1/p^{\infty}}A)^{\wedge_p} \to C'$, so $f \in (X^{\infty})^{\sharp}(C)$. We have thus shown that $(X^{\infty})^{\sharp}=X$, giving the required equivalence
\[
 ((\Psi^{1/p^{\infty}}\widehat{\qDR}_p(A/R))^{\wedge_p} )^a  \simeq \oR\Gamma_{\Pfd}(X\by_Y\Lim_{\Psi^p}Y, (\sA_{\inf})_*)^a.
\]
Finally, compatibility of these equivalences with the $(\Psi^{1/p^{\infty}}R[q])^{\wedge_{(p,q-1)}}$-module structures is given by functoriality, multiplicativity and the identification $ (\Psi^{1/p^{\infty}}R[q])^{\wedge_{(p,q-1)}} \simeq (\Psi^{1/p^{\infty}}\widehat{\qDR}_p(R/R))^{\wedge_p}$. 
\end{proof}

\begin{remark}\label{qconnrmk2}
Corresponding to the cohomology theory $((\Psi^{1/p^{\infty}}\widehat{\qDR}_p(A/R))^{\wedge_p})^a$, it is natural to consider $q$-connections on finite projective modules $M$ over 
\begin{align*}
& \eta_{(q-1)}^0 ((\Psi^{1/p^{\infty}}(\Omega^*_{A/R}\llb q -1\rrb, (q-1)\nabla_q))^{\wedge_p,a}) \\
&= \{a \in (\Psi^{1/p^{\infty}}(A\llb q -1\rrb))^{\wedge_{(p,q-1)},a}~:~ \nabla_q a \in (\Psi^{1/p^{\infty}}(\Omega^1_A\llb q -1\rrb))^{\wedge_{(p,q-1)},a}\}\\
% \eta_{(q-1)}^0((\Psi^{1/p^{\infty}}\widehat{\qDR}_p(A/R))^{\wedge_p})^a %%%
&= ((\sum_n [p^n]_{q^{1/p^n}} \Psi^{1/p^n}A[ q^{1/p^{\infty}}])^{\wedge_{(p,q-1)}})^a.
\end{align*}
It follows from the proof of Proposition \ref{qconnprop} that these are equivalent, for $X=\Spec A$, to finite projective almost $(\Psi^{1/p^{\infty}}\sO_{\hat{X}^q,\strat})^{\wedge_p}$-modules $\sN$ for which $\sN/(q-1)$ is the pullback of the almost $\H^0((\Psi^{1/p^{\infty}}\widehat{\qDR}_p(A/R))^{\wedge_p}/(q-1))$-module $\Gamma(\hat{X}^q_{\strat}, \sN/(q-1))=:M_0$. 

Up to almost isomorphism, these correspond via the proof of Theorem \ref{mainthm4}  to those finite  projective $\sA_{\inf}$-modules $N$ on the site  of integral perfectoid algebras $C$ over  $A[\zeta_{p^{\infty}}]^{\wedge_p}\ten_R \Psi^{1/p^{\infty}}R$ for which there exists a $W^{(p)}(A[\zeta_{p^{\infty}}]^{\wedge_p})$-module $M_0$ with $W^{(p)}(C)$-linear isomorphisms
\[
% \Lim_n  N/[p^n]_{q^{1/p^n}} \cong M_0\ten_{W^{(p)}(A[\zeta_{p^{\infty}}]^{\wedge_p})}\Lim_n \sA_{\inf}/[p^n]_{q^{1/p^n}},
N(C)\ten_{\sA_{\inf}(C)}W^{(p)}(C) \cong M_0\ten_{W^{(p)}(A[\zeta_{p^{\infty}}]^{\wedge_p})}W^{(p)}(C),
\]
functorial in $C$.

This establishes a weakened form of \cite[Conjecture 7.5]{scholzeqdef} on co-ordinate independence of the category of $q$-connections, giving the statement for almost $(\sum_n [p^n]_{q^{1/p^n}} \Psi^{1/p^n}A[ q^{1/p^{\infty}}-1])^{\wedge_p})$-modules  rather than $A\llb q-1 \rrb$-modules.  
\end{remark}

The following gives a slight partial refinement of \cite[Theorem 1.17]{BhattMorrowScholze}: 
\begin{corollary}\label{finalcor}
If $R$ is a  $p$-adically complete $\Lambda_p$-ring over $\Z_p$, and $A$ 
 a formal $R$-deformation  of a smooth ring over $(R/p)$, then the $q$-de Rham cohomology complex $(q\mbox{-}\Omega^{\bt}_{A/R, \boxempty}\ten_{R[q]}(\Psi^{1/p^{\infty}}R)[q^{1/p^{\infty}}])^{\wedge_p}$ is, up to almost quasi-isomorphism, independent of a choice of co-ordinates $\boxempty$. As such, it is naturally an invariant of the commutative $p$-adically complete
 $(\Psi^{1/p^{\infty}}R)[\zeta_{p^{\infty}}]^{\wedge_p}$-algebra  $(A[\zeta_{p^{\infty}}]\ten_R \Psi^{1/p^{\infty}}R)^{\wedge_p}$.
\end{corollary}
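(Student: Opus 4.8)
The plan is to deduce the corollary from Theorem \ref{mainthm4}, transporting its almost quasi-isomorphism through the d\'ecalage functor $\oL\eta_{(q-1)}$, which links the Frobenius-stabilised complex to an honest $q$-de Rham complex. First I would base-change along $R \to \Psi^{1/p^{\infty}}R$ and $p$-complete, so as to reduce to the case $\Psi^{1/p^{\infty}}R = R$: writing $R':=(\Psi^{1/p^{\infty}}R)^{\wedge_p}$ and $A':=(A\ten_R R')^{\wedge_p}$, the ring $A'$ is again a formal $R'$-deformation of a smooth $(R'/p)$-algebra, the framing $\boxempty$ base-changes to one over $R'$, and I would check that the complex of the statement is quasi-isomorphic to $(\widehat{q\mbox{-}\Omega}^{\bt}_{A'/R', \boxempty'}\ten_{R'[q]}R'[q^{1/p^{\infty}}])^{\wedge_p}$, that $(A[\zeta_{p^{\infty}}]\ten_R \Psi^{1/p^{\infty}}R)^{\wedge_p}\cong (A'[\zeta_{p^{\infty}}])^{\wedge_p}$, and that $(\Psi^{1/p^{\infty}}\widehat{\qDR}_p(A/R))^{\wedge_p}\simeq (\Psi^{1/p^{\infty}}\widehat{\qDR}_p(A'/R'))^{\wedge_p}$ --- the last because $\Psi^{1/p^{\infty}}$ already inverts all the Adams operations, so pre-inverting $\Psi^p$ on the base changes nothing.

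Assuming now $\Psi^{1/p^{\infty}}R = R$, I would identify the ($p$-completed, $q^{1/p^{\infty}}$-adjoined) $q$-de Rham complex with $\oL\eta_{(q-1)}(\Psi^{1/p^{\infty}}\widehat{\qDR}_p(A/R))^{\wedge_p}$ as follows. Since $A$ is $p$-adically complete, $\{p\}$ contains every residue characteristic of $A$, so Theorem \ref{mainthm3} applies and gives $\oL\eta_{(q-1)}\widehat{\qDR}_p(A/R)\simeq \widehat{q\mbox{-}\Omega}^{\bt}_{A/R, \boxempty}$; on the other hand Lemma \ref{perfectioncoholemma} gives
\[
 (R[q^{1/p^{\infty}}]\ten_{R[q]}\oL\eta_{(q-1)}\widehat{\qDR}_p(A/R))^{\wedge_p}\;\simeq\; \oL\eta_{(q-1)}(\Psi^{1/p^{\infty}}\widehat{\qDR}_p(A/R))^{\wedge_p}.
\]
Combining these two yields the desired identification, after which it suffices to prove that $\oL\eta_{(q-1)}(\Psi^{1/p^{\infty}}\widehat{\qDR}_p(A/R))^{\wedge_p}$ is, up to quasi-isomorphism, a functor of the commutative $p$-adically complete algebra $(A[\zeta_{p^{\infty}}])^{\wedge_p}$; independence of $\boxempty$ is then automatic, since $\boxempty$ enters only through the auxiliary $\Lambda_p$-structure used to build $\widehat{\qDR}_p$.

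For this final point I would feed in Theorem \ref{mainthm4}: the almost complex over the basic setup $(\Z_p[q^{1/p^{\infty}}]^{\wedge_{(p,q-1)}}, W^{(p)}(\m))$ associated to $(\Psi^{1/p^{\infty}}\widehat{\qDR}_p(A/R))^{\wedge_p}$ is quasi-isomorphic to $\oR\Gamma_{\Pfd}((A[\zeta_{p^{\infty}}])^{\wedge_p}, \sA_{\inf})$, which is manifestly functorial in the algebra $(A[\zeta_{p^{\infty}}])^{\wedge_p}$. I would then apply $\oL\eta_{(q-1)}$ to both sides and pass to modules of almost elements via $(-)_*$, using Lemma \ref{recoveralmost} to see that $\oL\eta_{(q-1)}(\Psi^{1/p^{\infty}}\widehat{\qDR}_p(A/R))^{\wedge_p}$ --- being the $p$-completion of $\widehat{q\mbox{-}\Omega}^{\bt}_{A/R, \boxempty}\ten_{R[q]}R[q^{1/p^{\infty}}]$, and hence of the form to which that lemma applies --- coincides with its own module of almost elements; this then produces the required functorial description of the $q$-de Rham complex.

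The hard part will be justifying that $\oL\eta_{(q-1)}$ may be applied to the statement of Theorem \ref{mainthm4} at all: d\'ecalage does not commute with cones, so one must argue directly that $\oL\eta_{(q-1)}$ carries almost quasi-isomorphisms for this basic setup --- maps with $W^{(p)}(\m)$-torsion, equivalently $(q^{1/p^{\infty}}-1)$-torsion, kernel and cokernel --- to almost quasi-isomorphisms. The key facts are that such torsion modules are $(q-1)$-power-torsion, that $\oL\eta_{(q-1)}$ annihilates $(q-1)$-power-torsion complexes, and that $\oL\eta_{(q-1)}$ is compatible with the truncations and filtered colimits needed to reduce to that case; the identity $(q-1) = [p^n]_{q^{1/p^n}}(q^{1/p^n}-1)$ is what bridges the $(q-1)$-filtration defining the d\'ecalage and the finer $(q^{1/p^{\infty}}-1)$-torsion of the almost structure. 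The remaining ingredients --- the base-change compatibilities of the first step and the identification of the $q$-de Rham complex of the statement with the framed complex of Definition \ref{qhatdef2} after $p$-completing and adjoining $q^{1/p^{\infty}}$ --- are routine and I would only indicate them.
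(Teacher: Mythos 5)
Your overall architecture --- reduce to the case $\Psi^{1/p^{\infty}}R=R$, identify the completed $q$-de Rham complex of the statement with $\oL\eta_{(q-1)}(\Psi^{1/p^{\infty}}\widehat{\qDR}_p(A/R))^{\wedge_p}$ via Theorem \ref{mainthm3} and Lemma \ref{perfectioncoholemma}, and then invoke Theorem \ref{mainthm4} for functoriality --- is the same as the paper's. The divergence is in the order of the last two operations: you apply $\oL\eta_{(q-1)}$ to the almost quasi-isomorphism of Theorem \ref{mainthm4} and only afterwards pass to modules of almost elements, whereas the paper applies $(-)_*$ first. Since Lemma \ref{recoveralmost} shows that $(\Psi^{1/p^{\infty}}\widehat{\qDR}_p(A/R))^{\wedge_p}$ is already its own module of almost elements, Theorem \ref{mainthm4} exhibits it --- \emph{before} any d\'ecalage --- as the honest complex $\oR\Gamma_{\Pfd}(\cdot,(\sA_{\inf})_*)$, functorial in $(A[\zeta_{p^{\infty}}]\ten_R\Psi^{1/p^{\infty}}R)^{\wedge_p}$; one then applies $\oL\eta_{(q-1)}$ to a genuine quasi-isomorphism class, and the compatibility of d\'ecalage with almost mathematics never has to be addressed.

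Your ordering forces you to address it, and that is where the gap lies. The step you flag as the hard part is genuinely the crux, and your sketched justification is wrong on a key point: $\oL\eta_{(q-1)}$ does \emph{not} annihilate $(q-1)$-power-torsion complexes. By the standard formula $H^i(\oL\eta_f C)\cong H^i(C)/H^i(C)[f]$, d\'ecalage kills only cohomology annihilated by $f$ itself; for instance $\oL\eta_f(R/f^2)\simeq R/f\neq 0$. You are rescued from this particular error because almost-zero modules for the setup $(\Z[q^{1/p^{\infty}}]^{\wedge_{(p,q-1)}},W^{(p)}(\m))$ are killed by $W^{(p)}(\m)\ni(q-1)$, hence by $(q-1)$ on the nose rather than merely by a power of it. But even granting that, acyclicity of $\oL\eta_{(q-1)}(\cone(\alpha))$ does not imply that $\oL\eta_{(q-1)}(\alpha)$ is an almost quasi-isomorphism, precisely because d\'ecalage does not commute with cones --- the very point you concede without resolving. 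The standard repair is a lemma of the type proved in \cite{BhattMorrowScholze}: if $\cone(\alpha)$ has cohomology killed by an ideal $I$, then $\cone(\oL\eta_g\alpha)$ has cohomology killed by $I^2$, which suffices here because $W^{(p)}(\m)$ is idempotent; you would need to prove or cite this. Alternatively, and more simply, reorder the final step as the paper does, which removes the difficulty entirely.
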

\begin{proof}
 Since 
\[
 \Psi^{1/p^{\infty}}\qDR_p(A/R) = \Psi^{1/p^{\infty}}\qDR_p((A\ten_R \Psi^{1/p^{\infty}}R)/\Psi^{1/p^{\infty}}R),
\]
 Theorem \ref{mainthm3} combines with Lemma \ref{perfectioncoholemma}  to give 
\[
( q\mbox{-}\Omega^{\bt}_{A/R, \boxempty}\ten_{R[q]}(\Psi^{1/p^{\infty}}R)[q^{1/p^{\infty}}])^{\wedge_p} \simeq \oL\eta_{(q-1)}((\Psi^{1/p^{\infty}}\widehat{\qDR}_p(A/R))^{\wedge_p}), 
\]
and by Theorem \ref{mainthm4}, we know that this depends only on $(A[\zeta_{p^{\infty}}]\ten_R \Psi^{1/p^{\infty}}R)^{\wedge_p}$ up to almost quasi-isomorphism.
\end{proof}

\begin{remark}
The almost quasi-isomorphism in Corollary \ref{finalcor} should be a genuine quasi-isomorphism when we impose some conditions on the base ring $R$. By \cite[Lemma 8.11]{BhattMorrowScholze}, it would suffice to verify that $ \H^*((\Psi^{1/p^{\infty}} (\Omega^*_{A/R}\llbracket q-1 \rrbracket, (q-1)\nabla_q))^{\wedge_p})$ and its quotient by $(q-1)$ have no $(q^{1/p^{\infty}}-1)$-torsion, which should follow for $R$ smooth by an argument similar to \cite[Proposition 8.9]{BhattMorrowScholze}.
\end{remark}

\begin{remark}[Eliminating roots of $q$]\label{finalrmk}
The key feature of the comparison results in this section is that, up to faithfully flat descent, the functor $X_{\strat}^{q,p}$ does not depend on Adams operations when restricted to the category of  integral
 perfectoid  $\Lambda_p$-rings $B$ over $\Z[q]$, since the proof of Theorem \ref{mainthm4} gives $(X_{\strat}^{q,p})^{\sharp}(B) \cong X(B/[p]_{q^{1/p}})$.  We can extend the latter functor to more general $\Lambda_p$-rings over $\Z[q]$ by setting 
\[
 X^{q,p}(B):=X(B/(\Psi^p)^{-1}([p]_qB)),
\]
which does not  depend on any Adams operations on $X$.

When $\sO_X$ has a $\Lambda_p$-ring structure, there is then a natural map $\alpha \co  X_{\strat}^{q,p}\to X^{q,p}$  because $\Psi^p((q-1)B) \subset [p]_qB$. This induces a transformation
\[
 \alpha^* \co \oR\Hom_{[f\hat{\Lambda}_p(R\llb q-1 \rrb),\Set]}(X^{q,p},\sO) \to %\oR\Hom_{[f\hat{\Lambda}_p(R\llb q-1 \rrb),\Set]}(X_{\strat}^{q,p},\sO)=
\widehat{\qDR}_p(A/R)^{\wedge_p}
\]
for $X=\Spec A$. But for integral perfectoid $\Lambda_p$-rings $B$, we know that $X^{q,p}(B)=  (X_{\strat}^{q,p})^{\sharp}(B)$, so by adjunction, as in the proof of Theorem \ref{mainthm4}, $\alpha^*$ becomes an almost quasi-isomorphism on applying a form of  completed stabilisation $\Psi^{1/p^{\infty}}(-)^{\wedge_p}$. %%might be issue if no levelwise finite resolution, since infinite limits don't commute with filtered colimits. sth like \oR\Hom_{[f\hat{\Lambda}_p(R\llb q-1 \rrb),\Set]}(X^{q,p},Psi^{1/p^{\infty}}\sO^{\wedge_p})$
 Thus $\H^*(X^{q,p},\sO)$ might be a candidate for the co-ordinate independent $q$-de Rham cohomology theory proposed in \cite{scholzeqdef}. It naturally carries an Adams operation $\Psi^p$, which would correspond to the operation $\phi_p$ of \cite[Conjecture 6.1]{scholzeqdef}. %%the gammas look like Adams opps away from $p$, extending trivial on $A$. Easiest way to obtain them is to observe that we have semilinear functoriality for any $\L_p$-ring endomorphism of $R\llb q-1 \rrb$ preserving the ideal $([p]_q)$, and this includes all the other Adams ops.

Any $a \in A$ defines an element of $\H^0(X^{q,p},\sO/(\Psi^p)^{-1}([p]_q\sO) )$ so $\Psi^p(a) \in\H^0(X^{q,p},\sO/[p]_q)$ and applying the connecting homomorphism associated to   %the exact sequence $0 \to \sO \xra{[p]_q} \sO \to \sO/[p]_q \to 0$  
$[p]_q \co \sO \to \sO$
gives an element $\beta_{[p]_q} \Psi^p(a) \in\H^1(X^{q,p},\sO)$ whose image under $\H^1(\alpha^*)$ is 
\[
[p]_q^{-1}\Psi^p( (q-1)\nabla_qa)=(q-1)\Psi^p(\nabla_qa).
\]

Moreover, to $a \in A$ we may associate elements $a_n \in  \H^0(X^{q,p}, \sO/[p^{n}]_q)$ for $n \ge 1$, determined by the property that 
$a_n \equiv \Psi^{p^i}a^{p^{n-i}} \mod [p]_{q^{p^{i-1}}}$ for $1 \le i \le n$,
and these give rise to   elements $\beta_{[p^{n}]_q} a_n \in  \H^1(X^{q,p},\sO)$.
Explicitly, if we define  operations $\vareps_i$ on $\sO$ by  $\vareps_0=\id$ and  $\vareps_{i+1}(a):= (a^{p^{i+1}}- \Psi^p(a^{p^{i}}))/p^{i+1} $, then for a local lift $\tilde{a} \in \sO$ of $a \in \sO/(\Psi^p)^{-1}([p]_q)$, we have
\[
a_n = \sum_{i=0}^{n-1} [p^i]_{q^{p^{n-i}}}\Psi^{p^{n-i}}( \vareps_i\tilde{a}) +  [p^n]_q\sO, 
\]
so
\begin{align*}
 \H^1(\alpha^*)(\beta_{[p^{n}]_q} a_n)%&=  [p^{n}]_q^{-1} \sum_{i=0}^{n-1} [p^i]_{q^{p^{n-i}}} \Psi^{p^{n-i}}( (q-1)\nabla_q \vareps_i a)\\
&=   (q-1) \sum_{i=0}^{n-1}  \Psi^{p^{n-i}}( \nabla_q \vareps_i \tilde{a}).
\end{align*}
 
In particular, %if  $y$ is a rank $1$ element for a choice of $\Lambda_p$-ring structure on $A$, we have  $\H^1(\alpha^*)( \beta_{[p^n]_q} y_{n-1}) =(q-1)\Psi^{p^{n}}(\nabla_qy)$; 
for $A=R[x]$ these include all the elements $ (q-1) [m]_{q^{p^{n}}}x^{p^{n} m -1}dx $, since $\vareps_i(x^m)=0$ for all $i>0$, $x^m$ having rank $1$.
This  suggests  that in general the image of  $\H^1(\alpha^*)$  might be $(q-1)\H^1\widehat{\qDR}_p(A/R)^{\wedge_p}$, tying in well with $(q-1)$-adic d\'ecalage. Explicit descriptions for much of the functoriality from Corollary \ref{finalcor} can also be inferred from this analysis, since it implies that the transformations $  \sum_{i=0}^{n-1}  \Psi^{p^{n-i}} \circ \nabla_q \circ \vareps_i\co A \to  \H^1(q\mbox{-}\Omega^{\bt}_{A/R, \boxempty})$ are all natural  in $A$. 
\end{remark}

\bibliographystyle{alphanum}
\bibliography{references.bib}

\def\cprime{$'$}
\begin{thebibliography}{BMS}

\bibitem[And]{andreFacteur}
Yves Andr\'{e}.
\newblock La conjecture du facteur direct.
\newblock {\em Publ. Math. Inst. Hautes \'{E}tudes Sci.}, 127:71--93, 2018.
\newblock arXiv: 1609.00345 [math.AG].

\bibitem[Bd]{BhattDeJong}
B.~{Bhatt} and A.~J. {de Jong}.
\newblock {Crystalline cohomology and de Rham cohomology}.
\newblock arXiv: 1110.5001 [math.AG], 2011.

\bibitem[Bha]{bhattDirectSummand}
Bhargav Bhatt.
\newblock On the direct summand conjecture and its derived variant.
\newblock {\em Invent. Math.}, 212(2):297--317, 2018.
\newblock arXiv:1608.08882[math.AG].

\bibitem[BK]{bousfieldkan}
A.~K. Bousfield and D.~M. Kan.
\newblock {\em Homotopy limits, completions and localizations}.
\newblock Lecture Notes in Mathematics, Vol. 304. Springer-Verlag, Berlin,
  1972.

\bibitem[BMS]{BhattMorrowScholze}
B.~Bhatt, M.~Morrow, and P.~Scholze.
\newblock Integral $p$-adic {H}odge theory.
\newblock arXiv:1602.03148 [math.NT], 2016.

\bibitem[Bor]{borgerBasicGeomI}
James Borger.
\newblock The basic geometry of {W}itt vectors, {I}: {T}he affine case.
\newblock {\em Algebra Number Theory}, 5(2):231--285, 2011.

\bibitem[GR]{gabberalmost}
Ofer Gabber and Lorenzo Ramero.
\newblock {\em Almost ring theory}, volume 1800 of {\em Lecture Notes in
  Mathematics}.
\newblock Springer-Verlag, Berlin, 2003.
\newblock arXiv:math/0201175v3 [math.AG].

\bibitem[Gro]{Gr}
A.~Grothendieck.
\newblock Crystals and the de {R}ham cohomology of schemes.
\newblock In {\em Dix Expos{\'e}s sur la Cohomologie des Sch{\'e}mas}, pages
  306--358. North-Holland, Amsterdam, 1968.

\bibitem[Hir]{Hirschhorn}
Philip~S. Hirschhorn.
\newblock {\em Model categories and their localizations}, volume~99 of {\em
  Mathematical Surveys and Monographs}.
\newblock American Mathematical Society, Providence, RI, 2003.

\bibitem[Joy]{joyaldeltaWitt}
Andr{\'e} Joyal.
\newblock {$\delta$}-anneaux et vecteurs de {W}itt.
\newblock {\em C. R. Math. Rep. Acad. Sci. Canada}, 7(3):177--182, 1985.

\bibitem[Mac]{mac}
Saunders MacLane.
\newblock {\em Categories for the working mathematician}.
\newblock Springer-Verlag, New York, 1971.
\newblock Graduate Texts in Mathematics, Vol. 5.

\bibitem[Sch1]{scholzePerfectoidSpaces}
Peter Scholze.
\newblock Perfectoid spaces.
\newblock {\em Publ. Math. Inst. Hautes \'Etudes Sci.}, 116:245--313, 2012.

\bibitem[Sch2]{scholzeqdef}
Peter Scholze.
\newblock Canonical {$q$}-deformations in arithmetic geometry.
\newblock {\em Ann. Fac. Sci. Toulouse Math. (6)}, 26(5):1163--1192, 2017.
\newblock arXiv:1606.01796v1[math.AG].

\bibitem[Sim]{simpsonHtpy}
Carlos Simpson.
\newblock Homotopy over the complex numbers and generalized de {R}ham
  cohomology.
\newblock In {\em Moduli of vector bundles ({S}anda, 1994; {K}yoto, 1994)},
  volume 179 of {\em Lecture Notes in Pure and Appl. Math.}, pages 229--263.
  Dekker, New York, 1996.

\bibitem[Wei]{W}
Charles~A. Weibel.
\newblock {\em An introduction to homological algebra}.
\newblock Cambridge University Press, Cambridge, 1994.

\bibitem[Wil]{wilkerson}
Clarence Wilkerson.
\newblock Lambda-rings, binomial domains, and vector bundles over {${\bf
  C}P(\infty )$}.
\newblock {\em Comm. Algebra}, 10(3):311--328, 1982.

\end{thebibliography}

\end{document}